\documentclass[11pt]{article}

\usepackage{amsmath,amssymb, amsthm}
\usepackage{mathtools}
\usepackage{bm}
\usepackage{booktabs} 
\usepackage{subcaption}
\usepackage{ascmac}
\usepackage{graphicx} 
\usepackage[colorlinks=true,linkcolor=blue,citecolor=blue,urlcolor=blue]{hyperref}
\usepackage{tikz}
\usetikzlibrary{intersections, calc, arrows, math}
\usepackage{multirow}
\usepackage[normalem]{ulem}
\usepackage{url}

\newcommand{\mat}[1]{\begin{bmatrix} #1 \end{bmatrix}}
\newcommand{\Rn}{\mathbb{R}^n}
\newcommand{\R}{\mathbb{R}}
\newcommand{\T}{\top\hspace{-1pt}}

\newcommand{\argmin}{\mathop{\mathrm{arg~min}}\limits}
\newcommand{\bigmid}{\mathrel{}\middle|\mathrel{}}
\newcommand{\prob}[1]{\mathbb{P} \left( {#1} \right)}
\newcommand{\diag}{\operatorname{diag}}

\definecolor{myred}{RGB}{255,50,50}         
\definecolor{myblack}{RGB}{0,0,0}           
\definecolor{myblue}{RGB}{0,0,210}

\def\qed{\hfill $\Box$}
\newtheorem{thm}{Theorem}[section]
\newtheorem{lemma}[thm]{Lemma}

\newtheorem{dfn}[thm]{Definition}
\newtheorem{remark}{Remark}[section]
\newtheorem{assum}{Assumption}[section]
\newtheorem{example}{Example}[section]
\newtheorem{prop}[thm]{Proposition}
\newtheorem{cor}[thm]{Corollary}

\numberwithin{equation}{section}

\title{An uncertainty model for positive-valued parameters 
      with application to robust optimization%
  \thanks{This work was supported by JST SPRING (JPMJSP2110), the Grant-in-Aid for Scientific Research (C) (JP25K15002) and Grant-in-Aid for Scientific Research (B) (JP25K03082) from Japan Society for the Promotion of Science, and partially supported by the joint project of Kyoto University and Toyota Motor Corporation, titled ``Advanced Mathematical Science for Mobility Society''.}
}
\author{
  Tatsuya Tanaka$^{1}$, Huimin Li$^{1}$, Shota Yamanaka$^{2}$,\\[5pt]
  Ellen H. Fukuda$^{1}$, Nobuo Yamashita$^{1}$\\[0.75ex]
  \small $^{1}$Graduate School of Informatics, Kyoto University, Kyoto 606--8501, Japan\\
  \small $^{2}$Frontier Research Center, Toyota Motor Corporation, Aichi 471--8572, Japan
}

\begin{document}

\maketitle

\begin{abstract}
\noindent
Many practical optimization problems involve uncertain parameters that are strictly positive. However, the most common uncertainty sets used in robust optimization are the box and the ellipsoidal sets, which may include non-positive values when the level of uncertainty is large. This can lead to overly conservative solutions or make the corresponding robust counterpart infeasible. To overcome this, in this paper, we propose a new uncertainty-set model that not only preserves positivity but is also computationally tractable. The proposed set uses a particular convex function that measures the variation of uncertain parameters from their nominal values. We can also write the dual reformulation of the associated robust problem. For the theoretical results, we show several properties of the proposed model, including analytical bounds that guide the choice of the uncertainty level, as well as a probabilistic guarantee result. To check the validity of our proposal, we consider photovoltaic--battery operation planning problems and support vector machines in the numerical experiments. For these problems, standard uncertainty models may lead to infeasibility of the robust counterpart, while the proposed uncertainty set gives a tractable dual reformulation.\\

\noindent \textbf{Keywords:} Robust optimization, uncertainty set, positive-valued parameters, Fenchel duality, probabilistic guarantee.\\

\noindent \textbf{Mathematics subject classification:} 90C15, 90C46, 90C90
\end{abstract}

\section{Introduction} \label{sect:intro}

Robust optimization is one of the modeling frameworks in mathematical optimization that deals with uncertainties. 
When the uncertainty lies in the data, it assumes that the parameters belong to an uncertainty set and optimizes the decision variables under the worst-case scenario within that set. This reformulated problem is called the robust counterpart.
In most real-world applications of mathematical optimization, we do not know the exact values of the parameters or data; that is, their uncertainty is unavoidable.
For this reason, robust optimization attracts substantial attention as a methodology for obtaining reliable solutions even under such uncertainty.

The research related to robust optimization is vast. The paper~\cite{Ben00} reported cases taken from the well-known NETLIB library, where optimal solutions to the original nominal optimization problems violate their constraints with only a slight perturbation of the parameters.
This result demonstrates the critical importance of dealing with uncertainties. In terms of uncertainty modeling, the box uncertainty set, which is one of the most basic models, was proposed in~\cite{Soy73}.
It sets lower and upper bounds for the uncertain parameter vectors, and if the nominal problem is linear, then we can reformulate its robust counterpart as a linear program.
Another well-known uncertainty set is the ellipsoidal one,
proposed in~\cite{Ben98}, which expresses the uncertainty with an ellipsoid centered at a nominal value.
Because it can account for the correlation between each element of the uncertain parameters, it tends to yield less conservative robust optimal solutions compared to the box model.
In this case, if the nominal problem is linear, its robust counterpart can be written as a second-order cone optimization problem.
Note also that there is a fundamental trade-off between robustness and optimality: the higher the level of uncertainty we assume, the less optimal the robust solution becomes for the nominal problem.
An analysis of this trade-off, the ``price of robustness,'' was presented in~\cite{Ber04}, where the authors also proposed a linear uncertainty model which assumes that only some elements of the uncertain parameter vector can change from a nominal value at the same time.

Besides uncertainty modeling, it is also important to examine the tractability of robust counterparts.
The work~\cite{Ben15} proposed reformulating robust counterparts into computationally tractable problems using Fenchel duality. It showed that if a nominal constraint is concave with respect to an uncertain parameter vector and a particular class of uncertainty set is used, the problem can be reformulated into a tractable form.
This transformed dual form is convex with respect to the decision variables if the original constraint is also convex for them.
In addition, applications of robust optimization have been studied extensively, including truss topology design~\cite{Ben97}, portfolio selection~\cite{Fab07}, antenna design~\cite{Ben02}, optimization of energy systems~\cite{Ber12,Jia11}, water infrastructure planning~\cite{Beh17}, and classification problems in machine learning~\cite{Tak13,Xu09}.

On the other hand, there is a need for more uncertainty models suited to specific classes of parameter uncertainty. In particular, many practical optimization problems involve uncertain parameters that are strictly positive. For example, production costs in transport programming, stock prices in financial engineering, power demand in the optimization of energy systems or their operation, and bar volumes in truss topology design are all strictly positive-valued parameters. If we use uncertainty sets defined in the $m$-dimensional real space $\R^m$ without enforcing positivity, they may include zero or negative values at high uncertainty levels, which can lead to overly conservative solutions or even infeasibility of the robust counterpart. Therefore, we need an uncertainty model that can handle positive-valued parameters while still admitting a tractable reformulation.

Motivated by this, we propose a new uncertainty set model that is both tractable and capable of handling positive-valued parameters.
This model utilizes a convex function $t \mapsto t - \ln t - 1$ to measure the variation of the uncertain parameters from their nominal values.
We derive a tractable dual expression of the robust counterpart with our proposed uncertainty model by adapting and using the existing Fenchel duality results.

We establish several properties of robust optimization problems that utilize the proposed model as an uncertainty set.
We focus on the worst-case assumed value of the uncertain parameter and the robust optimal value, deriving their boundedness for arbitrary levels of assumed uncertainty.
In addition, we define their asymptotic properties as the uncertainty level increases.
These properties demonstrate that our model successfully maintains the ``positivity'' of the uncertain parameters, and they provide a theoretical basis for determining the size of the uncertainty set prior to solving the robust counterpart.
Furthermore, we derive a probabilistic guarantee under the assumption that each element of the uncertain parameter vector independently follows a lognormal distribution, which is one of the most common positive-valued probability distributions.
This guarantee evaluates the probability that the robust feasible solutions remain feasible for the original problem with respect to realized parameter scenarios.
We also conduct numerical experiments applying our model to two application problems: power system operational planning, and classification with support vector machines.

This paper is organized as follows. In Section~\ref{sect:pre}, we present the fundamental notions of robust optimization and tractability results. We also discuss the specific issues that occur when modeling positive-valued uncertainty parameters using existing models.
In Section~\ref{sect:log}, we propose a new uncertainty model with positive-valued uncertain parameters, and we derive its tractable dual expression by adapting and using the results from~\cite{Ben15}.
In Section~\ref{sect:prop}, we show several properties that result from applying the proposed model to construct the robust counterpart. These properties indicate that our model is highly suitable for assuming the uncertainty of positive-valued parameters. We also introduce a methodology for determining the values of the model's parameters and derive the probabilistic guarantee. In Section~\ref{sect:num}, we show some numerical experiments applying the proposed model.
Finally, in Section \ref{sect:conc}, we present some conclusions.


\section{Preliminaries} \label{sect:pre}

In this section, we present the notation used throughout this paper.
Next, we introduce fundamental notions of robust optimization 
and tractability results.
We also refer to the importance of modeling positive-valued uncertainty parameters
and the problems which occur when using existing models.

\subsection{Basic definitions and notation}

Throughout this work we use the following notation.
We denote the entries of a vector $x \in \Rn$ by $x_i \in \R$ for $i = 1, \dots, n$, 
while $x^1, \dots, x^N \in \R^n$ is a sequence of vectors. 
Given any matrix $M \in \R^{m \times n}$, its entries are denoted by $M_{i,j} \in \R$ for $i = 1, \dots, m$ and $j = 1, \dots, n$.
The sets $\R_+ ^n$ and $\R^n_{++}$ represent the nonnegative and positive orthants of $\R^n$, respectively, that is, $\R^n_{+} \coloneqq \left\{ x \in \R^n \bigmid x_i \geq 0, \ i = 1, \dots, n \right\}$ and
$\R^n_{++} \coloneqq \left\{ x \in \R^n \bigmid x_i > 0, \ i = 1, \dots, n \right\}$.
For a vector $x \in \Rn$, $\diag (x)$ denotes
the diagonal matrix in $\R^{n \times n}$ with diagonal entries $x_1, \dots, x_n$. We also use~$e$ for the all-ones vector, i.e., $e \coloneqq (1, \dots, 1)^\T$, where its dimension is determined by context, and $\T$ means transpose.
For a vector or a matrix, $\| \cdot \|$ denotes {an} arbitrary norm defined {on} $\R^n$ or $\R^{m \times n}$, respectively.  In particular, $\| x \|_2$ for a vector $x \in \Rn$ is the Euclidean norm and $\| A \|_F$ for a matrix $A \in \R^{m \times n}$ is the Frobenius norm.
Moreover, for a vector $x \in \Rn$, $\|x\|_*$ denotes the dual norm of the corresponding norm~$\|x\|$.

For a function $f \colon \Rn \rightarrow \R$, 
the set $\textrm{dom} (f) \subseteq \Rn$ denotes the domain of~$f$.
A differentiable function~$f$ is said to be increasing if its partial derivatives satisfy $\partial f(x)/ \partial x_i \geq 0$ for all $i = 1, \dots, n$ and $x \in \textrm{dom} (f)$. 
Similarly, $f$ is said to be decreasing
if $\partial f(x) / \partial x_i \leq 0$ for all $i = 1, \dots, n$ and $x \in \textrm{dom} (f)$.
Besides, the set $\partial f(x)$ denotes the subdifferential of~$f$ at~$x$, and when $f$ is differentiable at~$x$, we use $\nabla f(x)$ for its gradient.
In addition, for a convex function $f \colon \Rn \rightarrow \R$, 
$f^*(\cdot)$ denotes the conjugate function 
of $f$ defined by 
$f^*(v) \coloneqq \sup_{x \in \textrm{dom} (f)} \left\{ v^{\T} x - f(x) \right\}$.
On the other hand, for a concave function $g \colon \Rn \to \R$, $g_*(\cdot)$ denotes 
the conjugate function of $g$ defined as 
$g_*(v) \coloneqq \inf_{x \in \textrm{dom} ({g})} \left\{ v^{\T} x - g(x) \right\}$.
{For a function $h \colon \R^m \times \R^n \to \R$, we denote the partial concave conjugate with respect to the first variable as $h_*(v,x) := \inf_{a \in \R^m} \{ a^\T v - h(a,x) \}$.}
Also, $\delta^*(\cdot \mid S)$ {denotes} the support function for a convex set $S \subseteq \Rn$ 
defined as the conjugate function of the indicator function 
$\delta(\cdot \mid S)$ of $S$. 
Note that from the definition of the indicator function{,}
the support function is written as 
$\delta^*(v \mid S) \coloneqq \sup_{x \in S} v^{\T} x$.
Finally, for a set $S$, we denote its relative interior by $\textrm{ri } S$.

\subsection{Robust optimization}
We consider the following optimization problem:
\begin{equation}
    \label{prob:P_a}
    \tag{$\text{P}_{a}$}
    \begin{aligned}
        &{\displaystyle \min_x} \quad f_0(a, x) \\
        & \ {\rm s.t.} \hspace{5mm} f_i(a,x)\leq 0, \quad
            i = 1, \dots, N,
    \end{aligned} 
\end{equation}
where $x\in\mathbb{R}^n$ is the decision variable, $a\in\mathbb{R}^m$ is a parameter vector,
and $f_i\colon\mathbb{R}^m\times\mathbb{R}^n\to\mathbb{R}$ for $i=0,1,\dots,N$.
To solve the problem~\eqref{prob:P_a},
we can fix the value of parameter $a$ to a specific {nominal value}.
However, it is often the case that
we do not know the value of $a$ {precisely}.
In situations where
the solution of~\eqref{prob:P_a} must satisfy the constraints 
for all possible {realizations} 
or the objective function value should not be much larger than expected  
for every realized {value} of~$a$,
we have to {explicitly account for} the uncertainty of {the} parameter $a$.

In robust optimization, 
we assume the possible values of uncertain parameter $a$
with a set $\mathcal{U} \subseteq \R^m$ called {an} uncertainty set.
{Instead} of \eqref{prob:P_a}, we solve the problem below.
The goal of this problem is to optimize the objective function {with respect} to the {worst-case value} of $a$ in $\mathcal{U}$.  

\begin{dfn}
    We call the following problem a robust counterpart of the problem \eqref{prob:P_a}:
    \begin{equation}
        \label{prob:RC_P}
        \tag{$\text{RC}_{\text{P}}$}
        \begin{aligned}
            &{\displaystyle \min_x} \quad 
                \max_{a \in \mathcal{U}} \ {f_0}(a, x) \\
            & \ {\rm s.t.} \hspace{5mm} f_i (a, x) \le 0 \quad
                \forall {a} \in \mathcal{U}, \
                i = 1, \dots, N.
        \end{aligned}
    \end{equation}    
    Moreover, feasible solutions $x \in \Rn$ to the robust counterpart \eqref{prob:RC_P} 
    are called robust feasible solutions of the nominal problem \eqref{prob:P_a}.
    Similarly, the optimal solutions $x^* \in \Rn$ and the optimal value ${f_0}^*$ 
    of the robust counterpart \eqref{prob:RC_P}
    are called robust optimal solutions and the robust optimal value of the nominal problem \eqref{prob:P_a}, respectively. 
\end{dfn}
\noindent In contrast to the robust counterpart \eqref{prob:RC_P}, 
we {refer to} the original problem~\eqref{prob:P_a} {as} the nominal problem.

The robust counterpart is in general a semi-infinite problem,
which cannot be numerically solved efficiently even if it is convex.
Whether we can convert it into a tractable form and solve it efficiently
depends on the structure of the nominal problem~\eqref{prob:P_a}
and the choice of $\mathcal{U}$.
For instance, if {each} $f_i$, {$i = 1, \dots, N$} is affine with respect to  $a$,
and $\mathcal{U}$ is ${a}$ box~\cite{Soy73} or ellipsoidal~\cite{Ben98} uncertainty set, 
we can {explicitly} express the optimal solution $a^*$ of the maximization problem 
{appearing} in the constraints of the robust counterpart \eqref{prob:RC_P}.
{In this case,} the robust constraints {can be written} as linear or second-order cone constraints, respectively.
It is well known that we can efficiently solve these classes of problems 
with existing optimization algorithms and solvers.
However, in most cases, we cannot solve the maximization in the robust constraints explicitly. 
On the other hand, for robust constraints of the form
$
\max_{a \in U} f(a,x) \le 0,
$
if \(f(\cdot,x)\) is concave for all \(x\), then the constraint can be reformulated using Fenchel duality, as explained below.

\subsection{Tractable dual expression}
Let us now focus on a single robust constraint of the form
\begin{equation} \label{constraint:robust_U}
    \max_{a \in \mathcal{U}} f(a, x) \leq 0,
\end{equation}
where $f\colon\mathbb{R}^m\times\mathbb{R}^n\to\mathbb{R}$ and $f(\cdot,x)$ is concave for all
$x\in\mathbb{R}^n$. Then, we can reformulate this constraint into a tractable form using
Fenchel duality. The work~\cite{Ben15} showed that the robust constraint above can be reformulated
without the maximization term.

\begin{thm}[{\cite[Theorem 2]{Ben15}}] \label{Ben_dual}
    Suppose that the function $f \colon \R^m \times \R^n \to \R$
    {is such that $f(\cdot,x)$ is concave for all $x \in \R^n$.}
    We also assume that the nominal value $a^0$ satisfies
    $a^0 \in \text{\rm ri dom}(f(\cdot, x))$
    for all $x$.
    Let $\mathcal{U}$ be an uncertainty set defined as  
    \begin{equation}
        \mathcal{U} \coloneqq \left\{ a \in \mathbb{R}^m \bigmid a = a^0 + B \zeta, \ \zeta \in Z \right\},
         \label{Ben_uncertainty_set}
    \end{equation}
    where {$B \in \R^{m \times l}$ and} $Z \subset \R^l$ is a nonempty compact convex set {with}
    $0 \in {\rm ri} \ Z$.
    Then $x \in \Rn$ satisfies the robust constraint (\ref{constraint:robust_U})
    if and only if {there exists} a vector $v \in \R^m$
    satisfying the inequality  
    \begin{equation} \label{constraint:Ben_dual}
          (a^0 )^{\T} v + \delta^* (B^{\T} v \mid Z ) 
          - f_*(v, x) \leq 0. 
    \end{equation}
    \qed
\end{thm}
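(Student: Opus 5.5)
The plan is to fix $x$ and apply Fenchel's duality theorem, in its concave--convex form, to the inner maximization in \eqref{constraint:robust_U}. Write $g(a) \coloneqq f(a,x)$, which is concave by assumption. Then
\[
\max_{a \in \mathcal{U}} g(a) = \sup_{a} \left\{ g(a) - \delta(a \mid \mathcal{U}) \right\},
\]
where $\delta(\cdot \mid \mathcal{U})$ is convex because $\mathcal{U}$ is the affine image of a compact convex set. Fenchel duality (Rockafellar, Theorem~31.1) states that
\[
\sup_a \{ g(a) - h(a) \} = \inf_v \{ h^*(v) - g_*(v) \},
\]
with the infimum attained, provided $\text{ri dom}(g) \cap \text{ri dom}(h) \neq \emptyset$.

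\textbf{Step 1 (constraint qualification).} This is where the hypotheses enter. The set $\text{ri}\,\mathcal{U}$ equals $a^0 + B\,\text{ri} Z$, since relative interiors commute with linear maps. Because $0 \in \text{ri} Z$, we get $a^0 \in \text{ri}\,\mathcal{U}$. By assumption, also $a^0 \in \text{ri dom}\, g$. So the intersection is nonempty, strong duality holds, and the dual infimum is attained whenever it is finite.

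\textbf{Step 2 (compute the conjugate).} For $h = \delta(\cdot \mid \mathcal{U})$ we have $h^*(v) = \sup_{a\in\mathcal{U}} v^\T a$. Substituting $a = a^0 + B\zeta$ gives
\[
h^*(v) = (a^0)^\T v + \sup_{\zeta\in Z} (B^\T v)^\T \zeta = (a^0)^\T v + \delta^*(B^\T v \mid Z).
\]
By definition, $g_*(v) = f_*(v,x)$. Hence
\[
\max_{a\in\mathcal{U}} f(a,x) = \min_v \left\{ (a^0)^\T v + \delta^*(B^\T v\mid Z) - f_*(v,x) \right\}.
\]

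\textbf{Step 3 (the equivalence).} If $x$ satisfies \eqref{constraint:robust_U}, the left side is at most $0$ and in particular finite. The dual minimum is then attained at some $v$, which satisfies \eqref{constraint:Ben_dual}. Conversely, if some $v$ satisfies \eqref{constraint:Ben_dual}, weak duality alone gives the bound. Indeed, for any $a \in \mathcal{U}$,
\[
f(a,x) \le v^\T a - f_*(v,x) \le \sup_{a'\in\mathcal{U}} v^\T a' - f_*(v,x) \le 0,
\]
where the first inequality is the definition of $f_*$. So the converse direction needs no constraint qualification.

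\textbf{Main obstacle.} The delicate points are the attainment of the dual infimum and the correct use of the relative-interior condition. Attainment requires the primal value to be finite, and it is, since it is at most $0$. The paper's convention that $f$ is real-valued but possibly has a restricted domain must be handled by treating $g$ as an extended-valued concave function that is proper and closed as needed. If closedness fails, Theorem~31.1 still applies, because the ri condition alone suffices for the sup-side equality with dual attainment.
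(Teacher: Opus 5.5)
Your proof is correct and follows essentially the same route as the cited source \cite{Ben15}, and the Fenchel-duality argument the paper itself sketches in its objective-function corollary. You write the inner maximum as $\sup_a\{f(a,x)-\delta(a\mid\mathcal{U})\}$, invoke Fenchel duality with the qualification $a^0\in\mathrm{ri}\,\mathcal{U}\cap\mathrm{ri\,dom}\,f(\cdot,x)$ to get an attained dual minimum, and split $\delta^*(v\mid\mathcal{U})=(a^0)^\T v+\delta^*(B^\T v\mid Z)$; your remarks that the reverse implication needs only weak duality and that condition (a) of Rockafellar's Theorem~31.1 needs no closedness are accurate refinements.
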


Note that~\eqref{constraint:Ben_dual} holds under the assumption that $f(\cdot,x)$ is concave for all~$x$. However, if in addition we assume that $f(a,\cdot)$ is convex for all~$a$, then~\eqref{constraint:Ben_dual}
is a convex inequality with respect to both the variables $v$ and $x$. In fact, since $Z$ is a convex set, its support function $\delta^*(\cdot \mid Z)$ is a closed proper convex function. 
Moreover, if $f(a,\cdot)$ is convex for all~$a$ 
then $f_*(v,x) = \inf_{a\in\R^m} \{ a^\T v -f(a,x) \}$ is concave in $(v,x)$ and therefore the claim holds.
Another advantage of Theorem~\ref{Ben_dual} is that
the computations of the support function $\delta^*$ 
and the conjugate function $f_*$ are independent.
This independence enables us to compute them easily, even if the constraint function $f$ or the structure $Z$ of the uncertainty set is changed.

In this section, we restrict ourselves to the standard robust constraint
$\max_{a\in U} f(a,x)\le 0$. Other equivalent sign conventions, such as
$\min_{a\in U} f(a,x)\ge 0$, will be discussed later when they are more convenient for the application under consideration.

\subsection{Necessity of positive-valued uncertainty sets}
As mentioned in Section \ref{sect:intro},
optimization problems in applications often involve
parameters that take only positive values. 
However, existing models of uncertainty sets are not sufficient
to represent the uncertainty of such parameters.  
For example, let us consider a box or an ellipsoidal uncertainty set. 
Note that the size of the uncertainty set reflects the level of robustness.
When the size of a box or an ellipsoidal uncertainty set is large, it can contain points with negative components, as shown in Figure \ref{fig:ellip}.
This means that, under a high level of robustness or when the data uncertainty, such as variance, is large, these uncertainty sets may fail to preserve the positivity of the uncertain parameters.
This can result in returning overly conservative robust optimal solutions 
or, in some cases, even make the robust counterpart infeasible.

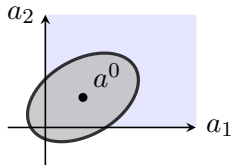
\begin{figure}[ht]
    \centering
    \begin{tikzpicture}
    \filldraw[fill=blue, opacity=0.1] (0.0001,1.5)--(0.0001,0.0001)--(2,0.0001)--(2,1.5);
    \coordinate (a^0) at (0.5,0.4); 
    \filldraw[fill=lightgray, very thick, , opacity=0.8] 
        (a^0) circle [x radius=0.8, y radius=0.5, rotate=30];
    \fill (a^0) circle (0.06) node[above right]{$a^0$};
    \coordinate (X_min) at (-0.5,0); 
    \coordinate (X_max) at (2,0); 
    \coordinate (Y_min) at (0,-0.5); 
    \coordinate (Y_max) at (0,1.5); 
    \draw[->,>=stealth,semithick] (X_min)--(X_max)node[right]{$a_1$}; 
    \draw[->,>=stealth,semithick] (Y_min)--(Y_max)node[left]{$a_2$}; 
    \end{tikzpicture}
    \caption{``Nonpositivity'' in an existing uncertainty model}
    \label{fig:ellip}
\end{figure}

\begin{example} \label{ex:infeas1}
    Consider the nominal problem with the feasible set 
    \begin{equation*}
    S(a, b) = \left\{ x \in \Rn \bigmid a^{\T} x \geq b, \ x \geq 0 \right\},
    \end{equation*}
    with uncertain parameter vector $a \in \R^n_{++}$ 
    and fixed parameter $b \in \R_{++}$.
    For the uncertainty set, let us use the ellipsoidal model defined as 
    \begin{equation} \label{eq:ex_ellip}
        \mathcal{U} \coloneqq \left\{ a \in \R^n \bigmid 
        (a - a^0)^{\T} \Sigma^{-1} (a - a^0) \leq \delta^2 \right\},
    \end{equation}
    where $a^0 \in \R^n$ denotes the nominal value of $a$, 
    and $\Sigma \in \R^{n \times n}$ is a positive definite matrix.
    Then the robust constraint is written as
    \begin{equation} \label{ineq:cons_example_nonpos}
        x \in S(a, b), \ \forall a \in \mathcal{U}.
    \end{equation}
    Note that $\delta \geq 0$ is a model parameter that determines to 
    the size of $\mathcal{U}$.
    In this situation, if we set $\delta > \| \Sigma^{-1/2} a^0 \|_2$, 
    then the ellipsoid $\mathcal{U}$ contains the origin in its interior 
    and thus also {contains} a point $a^- \in - \R^n_{++}$.
    This implies that there does not exist {any} $x$ such that $x \in S(a^-, b)$, 
    since $b > 0$, and therefore the robust constraint~\eqref{ineq:cons_example_nonpos} 
    has no robust feasible solutions.
\end{example}
\begin{example}
    Consider the nominal problem of the form
    \begin{equation*}
             \begin{aligned}
             &{\displaystyle \max_x} \quad a^{\T} x \\
             & \ {\rm s.t.} \hspace{5mm} x \in S, \quad x \geq 0, 
             \end{aligned}
    \end{equation*}
    where $a \in \Rn_{++}$ is the uncertain parameter vector 
    and $S \subseteq \Rn$ is a fixed nonempty set satisfying $0 \in S$.
    Then the robust counterpart is written as
    \begin{equation*}
             \begin{aligned}
             &{\displaystyle \max_x} \quad  \min_{a \in \mathcal{U}} \ a^{\T} x \\
             & \ {\rm s.t.} \hspace{5mm} x \in S, \quad x \geq 0.
             \end{aligned}
    \end{equation*}
    If we use the ellipsoidal uncertainty set as~\eqref{eq:ex_ellip} 
    and $\delta > \| \Sigma^{-1/2} a^0 \|_2$, 
    then since $x \geq 0$ we have $\min_{a \in \mathcal{U}} \ a^{\T} x \leq 0$.
    Hence, it yields the robust solution $x = 0$, 
    indicating that the robust counterpart has become meaningless, 
    because the positivity of the parameter was lost.
\end{example}
Therefore, in addition to tractability, we need an uncertainty model that can deal with positive-valued parameters, in the sense that it preserves ``positivity'', that is, the uncertainty set remains in $\R^m_{++}$ for any level of uncertainty.


\section{Uncertainty model 
     for positive-valued parameters} \label{sect:log}
In this section, we propose a new uncertainty set 
to deal with positive-valued parameter uncertainty.
Moreover, we derive an equivalent dual expression 
of the robust counterpart associated with our proposed model. 
\subsection{A new uncertainty model}
To handle the uncertainty of the parameter vector $a \in \R^m$ 
of~\eqref{constraint:robust_U}, we propose the following uncertainty set. 
\begin{dfn}
     We define a set $\Omega(a^0, \tau, A)$
     as follows:
     \begin{equation}
          \Omega(a^0, \tau, A) \coloneqq 
          \left\{ 
               a \in \R^m \bigmid
               a = A^0 z, \ 
               z_i - \ln z_i -1 \leq y_i \ (i = 1, \dots, m), \
               \| A y \| \leq \tau
          \right\},  
          \label{eq:omega}
     \end{equation}
     where 
     $a^0 \in \R^m_{++}$, $\tau \geq 0$, $A \in \R^{m \times m}$
     are parameters that determine the shape of the set, 
     and the matrix $A^0 \in \R^{m \times m}$ is defined as
     $A^0 \coloneqq \diag \left(a^0 \right)$. Recalling the robust constraint~\eqref{constraint:robust_U}, we also assume that $a^0 \in \text{\rm ri dom}(f(\cdot, x))$ for all $x$.
\end{dfn}

Clearly, the proposed uncertainty set preserves the positivity of the uncertain parameter vector~$a$, i.e., $\Omega(a^0, \tau, A) \subset \R^m_{++}$, due to the definition of~$A^0$ and~$z\in \R^m _{++}$.
Meanwhile, the set also measures the variation of the uncertain parameter~$a$ 
from a point $a^0 \in \R^m_{++}$ by means of the convex function $g \colon \R_+ \to \R$ defined as
\begin{equation} \label{func:g}
  g(t) \coloneqq  t - \ln t - 1,  
\end{equation}
and contains all points whose variation is less than or equal to~$\tau$.
Observe that the function $g$ satisfies $g(t) \geq 0$ for all $t > 0$, 
and $g(t) = 0$ if and only if $t = 1$.
Moreover, the value of $g(t)$ increases as $t$ moves away from $1$, 
and approaches infinity as $t \to \infty$.
Due to these properties, $g$ can be regarded as a function 
which measures the variation of $t$ from $1$.

We present a detailed interpretation in the simple case of $m = 1$. In this case, $\Omega(a^0, \tau, A)$ is a one-dimensional set
\begin{align} \label{eq:omega_1dim}
     \Omega(a^0, \tau, A) = \left\{ 
          a \in \R \bigmid z = \frac{a}{a^0}, 
          \ A(z - \ln z - 1) \leq \tau 
          \right\}.
\end{align}
Note that here $A$ is a scalar and therefore both sides of $A(z - \ln z - 1) \leq \tau$ are scaled
by a nonnegative constant.
Thus, without loss of generality, we can set $A = 1$.
Figure~\ref{fig:model_description} shows the set $\Omega(a^0, \tau, 1)$ when $m = 1$.
First, the relation $z = a / a^0$ in~\eqref{eq:omega_1dim} shifts
the center of the uncertainty of~$a$, the point from which the variation is measured, from~$a^0$ to~$1$.
Second, the variation of the shifted parameter~$z$
from~$1$ is expressed via~$g(z)$.
Third, the inequality $z - \ln z - 1 \leq \tau$ means that 
the interval defined by~$g(z) \leq \tau$ is the set of values of~$z$
for which $a = a^0 z \in \Omega(a^0, \tau, 1)$.
Finally, the uncertainty set $\Omega(a^0, \tau, 1)$ is obtained
as an interval of $a \in \R_{++}$
by scaling the interval of $z$ by $a^0$.

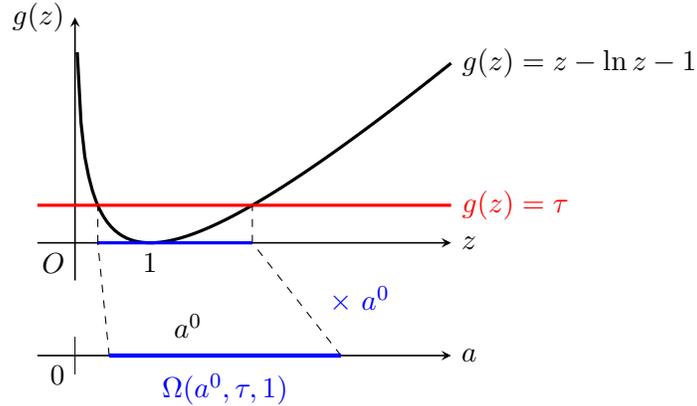
\begin{figure}[ht] 
    \centering
    \begin{tikzpicture}
    \coordinate (X_min) at (-0.5,0); 
    \coordinate (X_max) at (5,0); 
    \coordinate (Y_min) at (0,-0.5);
    \coordinate (Y_max) at (0,3); 
    \draw[->,>=stealth,semithick] (X_min)--(X_max)node[right]{$z$}; 
    \draw[->,>=stealth,semithick] (Y_min)--(Y_max)node[left]{$g(z)$}; 
    \draw (0,0)node[below left]{$O$}; 
    \draw (1,0)node[below]{$1$}; 
    \draw[very thick,samples=100,domain=0.03:5, name path=g] 
        plot(\x,{\x - ln(\x) - 1})node[right]{$g(z) = z - \ln z - 1$};
    \draw[very thick, red, name path=tau] (-0.5,0.5)--(5,0.5)node[right]{$g(z) = \tau$};
    \path[name intersections={of= g and tau, by={A,B}}]; 
    \coordinate (A_z) at ($(X_min)!(A)!(X_max)$); 
    \coordinate (B_z) at ($(X_min)!(B)!(X_max)$); 
    \draw[dashed] (A_z)--(A);
    \draw[dashed] (B_z)--(B);
    \draw[very thick, blue] (A_z)--(B_z); 
    \coordinate (X_min_3) at ($(X_min) - (0,1.5)$); 
    \coordinate (X_max_3) at ($(X_max) - (0,1.5)$); 
    \coordinate (Y_min_3) at (0,-1.5-0.25); 
    \coordinate (Y_max_3) at (0,-1.5+0.25);
    \draw[->,>=stealth,semithick] (X_min_3)--(X_max_3)node[right]{$a$}; 
    \draw (Y_min_3)--(Y_max_3);
    \draw ($(1.5,-1.5) + (0,0.1)$)node[above]{$a^0$}; 
    \draw (0,-1.5)node[below left]{$0$}; 
    \tikzmath{
        coordinate \c; \c{A_z} = (A_z);
        coordinate \c; \c{B_z} = (B_z);
    }
    \coordinate (A_a) at (1.5 * \cx{A_z}, -1.5); 
    \coordinate (B_a) at (1.5 * \cx{B_z}, -1.5);
    \draw[dashed] (A_a)--(A_z);
    \draw[dashed] (B_a)--(B_z);
    \draw[ultra thick, blue] (A_a)--(B_a);
    \draw[blue] ($(A_a)!0.5!(B_a) + (0, -0.1)$)node[below]{$\Omega(a^0, \tau, 1)$};
    \draw[blue] ($(B_z)!0.5!(B_a) + (0.3, 0)$)node[right]{$\times \ a^0$};
    \end{tikzpicture}
    \caption{The model description in the case of $m = 1$}
    \label{fig:model_description}
\end{figure}

We present the following remarks regarding our uncertainty set~\eqref{eq:omega}.
\begin{enumerate}
     \item The parameter~$\tau$ determines 
          the size of the uncertainty set \eqref{eq:omega_1dim}.
          Indeed, $\Omega(a^0, \tau, 1)$ becomes larger with the center~$a^0$ 
          as~$\tau$ increases.
          As we show in Proposition~\ref{prop:property_basic},
          $\tau$~has the same meaning when $m \geq 2$.

     \item The idea for the case $m \geq 2$ is the same as the case $m = 1$: 
          we transform~$a$ by~$a^0$,  
          measure the variation of~$z$ from~$1$ using the function~$g$, 
          and include all points whose values of $g(z)$ {are} less than or equal to~$\tau$.
          The difference is that {for} $m \geq 2$,
          the inequality $z - \ln z - 1 \leq \tau$ is replaced 
          by $\| A {\tilde{g}} \| \leq \tau$,
          where ${\tilde{g} \colon \R^m \to \R^m}$ is defined as ${\tilde{g}}(z) \coloneqq (g(z_1), \dots, g(z_m))^{\T}$.
          This allows us to incorporate the correlation between the components~$a_i$ (or $z_i$) 
          in the shape of the uncertainty set~\eqref{eq:omega}.
          The matrix~$A$ models the correlation of~$a_i$.
          For example, we can use the covariance matrix of~$a$, 
          like an ellipsoidal uncertainty set.

     \item In the definition of our model~\eqref{eq:omega}, 
          we use the inequality $g(z) \leq y$ instead of the equation $g(z) = y$. This ensures that the set $\Omega(a^0, \tau, A)$ remains convex by avoiding the use of the nonconvex equality restriction $g(z) = y$.
          
     \item The function~$g$ appears in some contexts of optimization theory,
          such as the proof of the uniqueness of the central path in interior-point methods, 
          and the characterization of approximation matrices generated by the DFP formula in quasi-Newton methods. 
          The function~$g$ is suitable for measuring 
          the variation of positive-valued parameters~$z$ from~$1$ 
          because it is defined only for positive $t$, 
          and satisfies $\lim_{t \downarrow 0} g(t) = \infty$. 
          This ensures the uncertainty set remains in $\R^m_{++}$ 
          even for large~$\tau$, so that we can model the uncertainty of positive-valued parameters at any level of uncertainty.
          Moreover, the conjugate function of~$g$ can be written explicitly, which allows us to obtain an explicit dual expression~\eqref{constraint:Ben_dual} for our model.
\end{enumerate}

The following proposition shows that  
$\tau$ is a parameter that determines the size of the uncertainty set \eqref{eq:omega} 
not only when $m = 1$ but also in the case of $m \geq 2$. 
This confirms that~$\tau$ corresponds to the level of uncertainty assumed by the model.
\begin{prop} \label{prop:property_basic}
     Assume that $A$ is a non-singular matrix.
     Then the uncertainty set \eqref{eq:omega} satisfies
     \begin{enumerate}
          \item $\Omega(a^0, 0, A) = \{a^0\}$,
          \item $\tau_1 \leq \tau_2 \implies 
               \Omega(a^0, \tau_1, A) \subseteq \Omega(a^0, \tau_2, A)$,
     \end{enumerate}
     for arbitrary vector $a^0 \in \R^m_{++}$.
\end{prop}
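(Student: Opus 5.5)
The plan is to argue directly from the definition \eqref{eq:omega}, using only the elementary properties of the function $g$ in \eqref{func:g} noted after the definition: $g(t)\ge 0$ for all $t>0$, and $g(t)=0$ if and only if $t=1$. Throughout, membership $a\in\Omega(a^0,\tau,A)$ means that there exist witnesses $z\in\R^m_{++}$ and $y\in\R^m$ such that $a=A^0z$, $g(z_i)\le y_i$ for all $i$, and $\|Ay\|\le\tau$. Positivity of $z$ is implicit, since $\ln z_i$ must be defined.

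For item 1, I will prove both inclusions.
\begin{enumerate}
\item[(i)] To show $a^0\in\Omega(a^0,0,A)$, I take $z=e$ and $y=0$. Then $A^0e=a^0$, $g(1)=0\le 0$, and $\|A\cdot 0\|=0\le 0$.
\item[(ii)] For the converse, let $a\in\Omega(a^0,0,A)$ with witnesses $(z,y)$. The constraint $\|Ay\|\le 0$ forces $Ay=0$ by definiteness of the norm. Nonsingularity of $A$ then gives $y=0$; this is the only place the hypothesis on $A$ is used.
\item[(iii)] Hence $0\le g(z_i)\le y_i=0$ for every $i$, so $g(z_i)=0$, which forces $z_i=1$. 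Therefore $z=e$ and $a=A^0e=a^0$.
\end{enumerate}

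For item 2, the argument is pure monotonicity of the defining constraint. Let $a\in\Omega(a^0,\tau_1,A)$ with witnesses $(z,y)$. The same pair satisfies $a=A^0z$ and $g(z_i)\le y_i$ unchanged. It also satisfies $\|Ay\|\le\tau_1\le\tau_2$, so $a\in\Omega(a^0,\tau_2,A)$. No assumption on $A$ is needed for this item.

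I do not expect a genuine obstacle. The only step requiring care is (ii) of item 1, passing from $\|Ay\|\le 0$ to $y=0$, since it is exactly where nonsingularity of $A$ enters. Without it, $y$ could be a nonzero element of the kernel of $A$ with positive entries, and the set would then not collapse to $\{a^0\}$. I will also state explicitly that $\tau\ge 0$ makes the sets nonempty, since $a^0$ always belongs to them by (i).
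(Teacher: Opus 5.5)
Your proof is correct and follows the paper's argument. For (i) you use nonsingularity of $A$ to pass from $\|Ay\|\le 0$ to $y=0$, then $g\ge 0$ with $g(t)=0$ iff $t=1$ to force $z=e$; for (ii) you reuse the same witnesses $(z,y)$, and your explicit check that $a^0\in\Omega(a^0,0,A)$ via $z=e$, $y=0$ supplies the reverse inclusion the paper leaves implicit.
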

\begin{proof}
     If $\tau = 0$ then \eqref{eq:omega} is given by
     \begin{equation*}
          \Omega(a^0, 0, A) = 
          \left\{ 
               a \in {\R^m} \bigmid 
               a = A^0 z, \ 
               z_i - \ln z_i -1 \leq y_i \ (i = 1, \dots, m), \
               \| A y \| \leq 0 \
          \right\}  .
          \label{omega_V=0}
     \end{equation*}
     Since $\| A y \| \leq 0$ and $A$ is {non-singular}, 
     we obtain $y = 0$, 
     which implies $z_i - \ln z_i -1 \leq 0, \ i = 1, \dots, m$.
     Recall that the function $g$ 
     satisfies $g(t) \geq 0$ for all $t > 0$
     and {becomes} 0 only when {$t = 1$}.
     Thus $z_i = 1$ {for} $i = 1, \dots, m$, and we obtain $a = {A^0} e = a^0$, 
     which {proves} property~(i).
     
     To show~(ii), let $a^1 \in \Omega(a^0, \tau_1, A)$.
     Then there exist $z^1, y^1 \in \R^m$ satisfying
     \begin{align*}
          a^1 = A^0 z^1, & \\
          z^1_i - \ln z^1_i - 1 \leq y^1_i, & \:\: i = 1, \dots, m, \\
          \| A y^1 \| \leq \tau_1. &
     \end{align*}
     Since $\tau_1 \leq \tau_2$, $y^1$ satisfies $\| A y^1 \| \leq \tau_2$, 
     and thus $a^1 \in \Omega(a^0, \tau_2, A)$. 
     Therefore, we have $\Omega(a^0, \tau_1, A) \subseteq \Omega(a^0, \tau_2, A)$.
     %
\end{proof}

The first property of the above proposition means that $\Omega$ is a singleton at~$a^0$ when~$\tau = 0$, 
which shows that~$\Omega$ is centered at the nominal value~$a^0$.
The second property indicates that~$\Omega$ becomes {larger} 
as~$\tau$ increases.
Figure~\ref{fig:unc} shows plots of our model \eqref{eq:omega} 
for several {values} of $\tau$ in the case $m = 2$.
We can also see in this figure that the two properties hold.
Thus, $\tau$ corresponds to the level of uncertainty assumed by the model.

\begin{figure}[ht]
     \begin{minipage}{.3\textwidth}
          \centering
          \includegraphics[width=4cm]{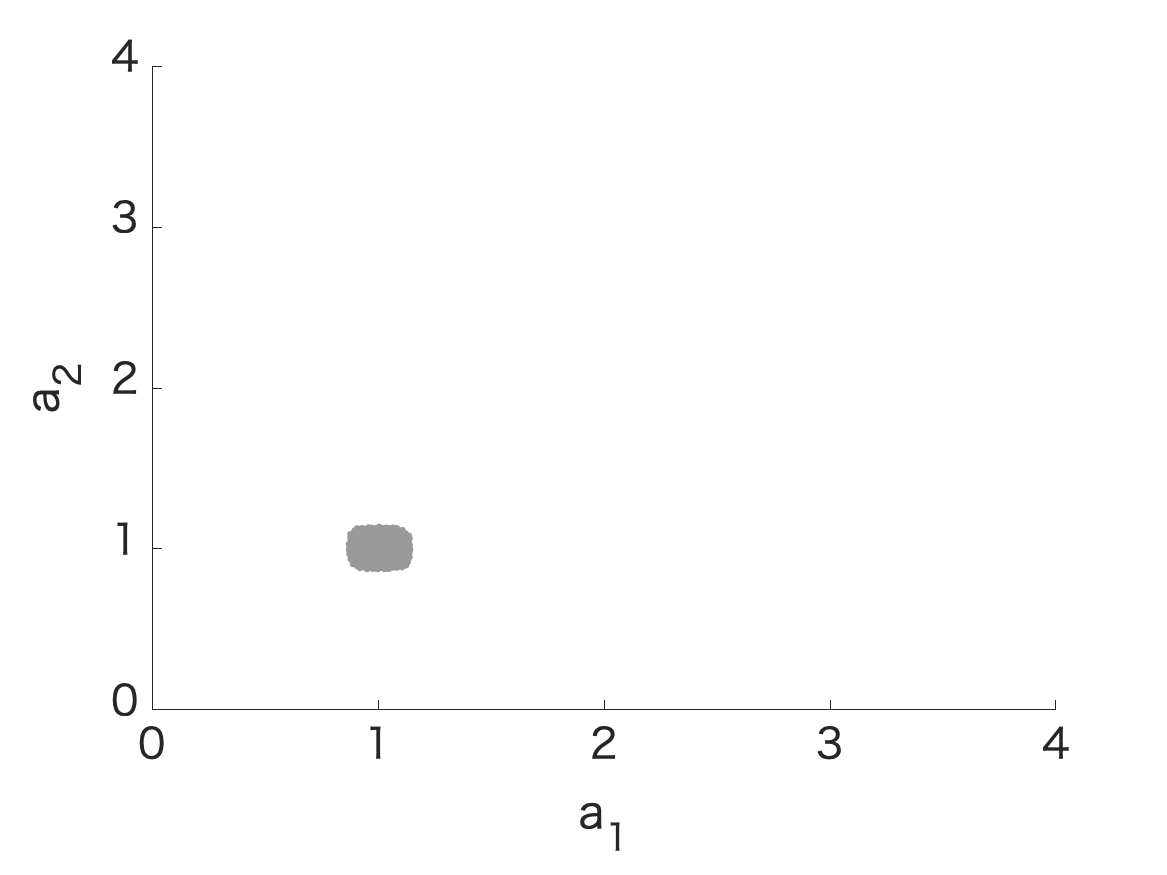}
          \subcaption{$\tau = 1 \times 10^{-2}$}
     \end{minipage}
     \begin{minipage}{.3\textwidth}
          \centering
          \includegraphics[width=4cm]{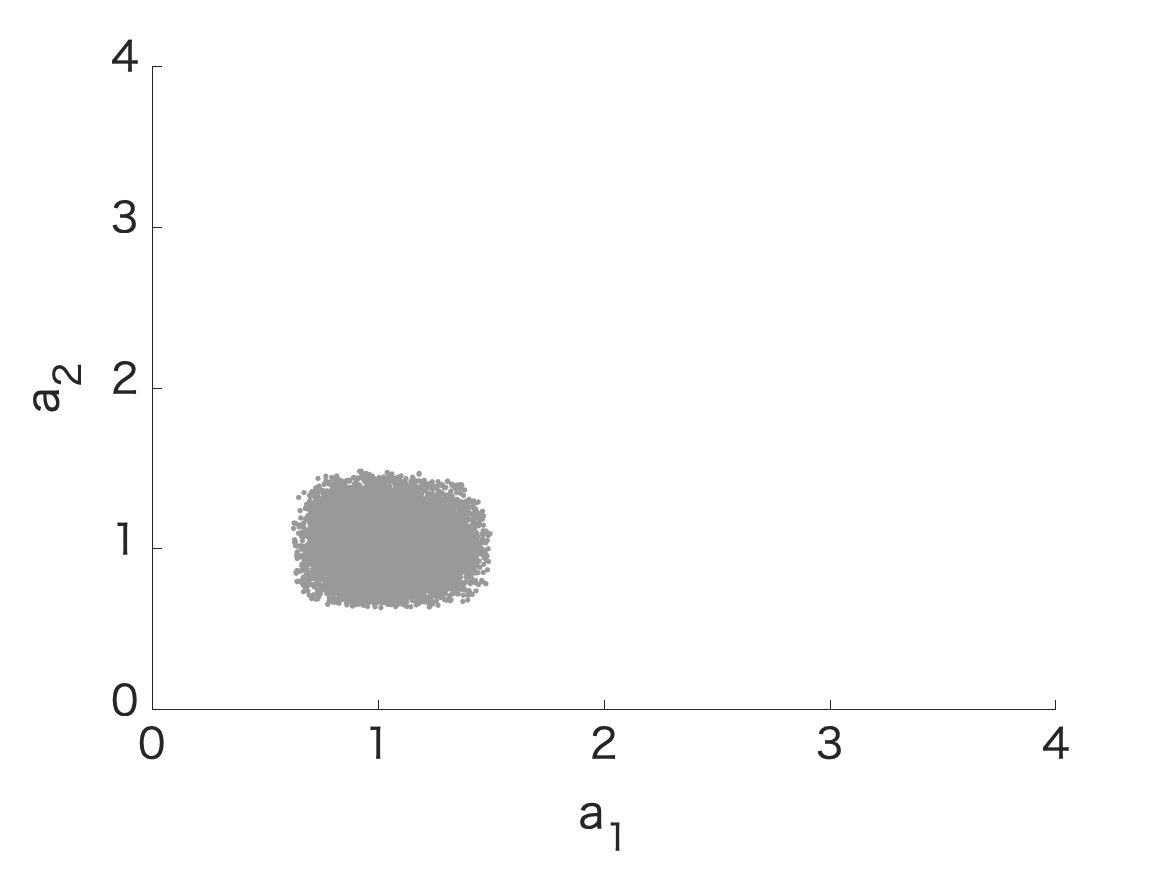}
          \subcaption{$\tau = 0.1$}
     \end{minipage}
     \begin{minipage}{.3\textwidth}
          \centering
          \includegraphics[width=4cm]{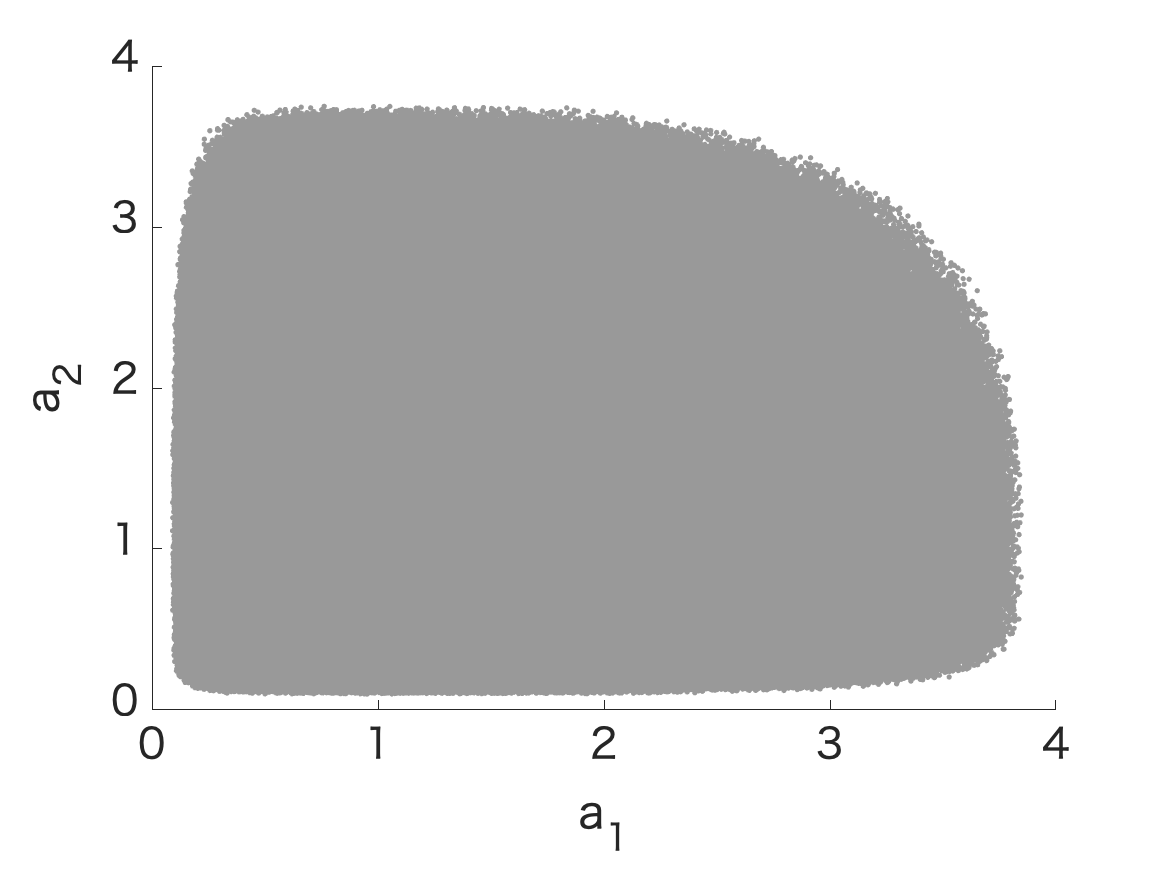}
          \subcaption{$\tau = 1.5$}
     \end{minipage}
     \caption{The proposed uncertainty set}
     \label{fig:unc}
\end{figure}

\begin{remark}
     As a generalized version of the proposed model \eqref{eq:omega}, 
     we can consider the following uncertainty set:
     \begin{equation} \label{eq:omega_general}
          \Omega(a^0, \tau, \delta, A, V) = 
          \left\{ 
               a \in {\R^m} \bigmid
               \begin{aligned}
                    a = A^0 z, \ 
                         z_i - \ln z_i -1 \leq y_i \ (i = 1, \dots, m), \\
                    \| A y \| \leq \tau, \
                         \| V(z-e) \| \leq \delta
               \end{aligned}
          \right\}, 
     \end{equation}
     where the matrix $V \in \R^{m \times m}$ and the scalar $\delta \geq 0$
     are additional parameters determining the shape of the set.
     Note that if the $\ell_2$-norm is used, the inequality $\| V (z - e) \| \leq \delta$ {describes} an {ellipsoidal region}. 
     {Thus,} \eqref{eq:omega_general} is a model that includes 
     an ellipsoidal {component in the description of the} uncertainty set.
     If $A = O$, \eqref{eq:omega_general} is equivalent to the ellipsoidal model 
     centered at the nominal value $a^0$. 
     {When} $V = O${,} \eqref{eq:omega_general} {reduces to the model}
    \eqref{eq:omega}.
\end{remark}
%


\subsection{Derivation of the tractable dual expression} \label{sect:dual}
In this subsection, we reformulate the robust convex counterpart 
obtained by using our proposed uncertainty model 
into an equivalent and tractable form using Fenchel duality.
We derive the dual expression of the general model~\eqref{eq:omega_general}.

\subsubsection{Rewriting for the dual expression-result}
In order to apply Theorem \ref{Ben_dual} to our model, 
we {rewrite} \eqref{eq:omega_general} {in the form consistent with the uncertainty set~\eqref{Ben_uncertainty_set} in Theorem~\ref{Ben_dual}} as follows:
\begin{equation} \label{eq:rewrite_Omega}
    {\Omega (a^0, \tau, \delta, A, V)} = \left\{ a = \mat{O & A^0} \mat{\eta \\{\xi}} \right.
     \left.\bigmid {\zeta} = \mat{\eta \\{\xi}} \in Z^0 \right\}, 
\end{equation}
where $\eta, \xi \in \R^m$, $A^0 \in \R^{m \times m}$, 
$O \in \R^{m \times m}$ denotes the zero matrix, 
$Z^0 \coloneqq Z^0_1 \cap Z^0_2 \cap Z^0_3$, and $Z^0_1$, $ Z^0_2$, $Z^0_3$ are defined respectively as
\begin{align*}
     Z^0_1 &\coloneqq \left\{ {\zeta} = \mat{\eta \\{\xi}} \in \R^{2m}
          \bigmid{\xi}_i - \ln{\xi}_i - 1 - \eta_i \leq 0
          \ (i = 1, \dots, m) \right\}, \\
     Z^0_2 &\coloneqq \left\{ {\zeta} = \mat{\eta \\{\xi}} \in \R^{2m}
          \bigmid \|A\eta\| \leq \tau \right\}, \\
     Z^0_3 &\coloneqq \left\{ {\zeta} = \mat{\eta \\{\xi}} \in \R^{2m} 
          \bigmid \|V({\xi} - e)\| \leq \delta \right\}.
\end{align*}
However, we cannot apply Theorem \ref{Ben_dual} directly by replacing~\eqref{Ben_uncertainty_set} with~\eqref{eq:rewrite_Omega}
because the assumption $0 \in \text{ri } {Z^0}$ is not satisfied.
Thus, we now show that we can still obtain the result of Theorem~\ref{Ben_dual} 
by shifting $Z^0$ in parallel, 
even when $0 \notin \text{ri } Z^0$.
\begin{cor} \label{cor:Ben_dual_arg}
     Suppose that the function $f \colon \R^m \times \R^n \to \R$
     is such that $f(\cdot,x)$ is concave for all~$x$
     and assume that the nominal value $\tilde{a}^0$ satisfies
     $\tilde{a}^0 + \tilde{B} \zeta^0 \in \text{\rm ri dom}(f(\cdot, x))$
     for all~$x$.
     Let $\tilde{\mathcal{U}}$ be an uncertainty set defined as  
     \begin{equation*}
          \tilde{\mathcal{U}} \coloneqq \left\{ a \in \R^m \bigmid 
               a = \tilde{a}^0 + \tilde{B}{\zeta}, \ {\zeta} \in \tilde{Z}^0 \right\},
     \end{equation*}
     where $\tilde{B} \in \R^{m \times l}$ and $\tilde{Z}^0 \subset \R^l$ is a nonempty compact convex set with
     ${\zeta}^0 \in \text{\rm ri } \tilde{Z}^0$.
     Let $\tilde{Z} \coloneqq \tilde{Z}^0 - \{{\zeta}^0\}$.
     Then $\tilde{\mathcal{U}}$ can be rewritten as
     \begin{equation*}
     \tilde{\mathcal{U}} = \left\{ a \in \R^m \ \big| \ a= (\tilde{a}^0 + \tilde{B} \zeta^0) + \tilde{B} \zeta', \ \zeta' \in \tilde{Z} \right\},
     \end{equation*}
     and $x \in \Rn$ satisfies $\max_{a \in \tilde{\mathcal{U}}} f(a,x) \le 0$ if and only if there exists a vector $v \in \R^m$
     such that
     \begin{equation} \label{Ben_dual_arg}
          (\tilde{a}^0 + \tilde{B} {\zeta}^0)^{\T} v + \delta^*(\tilde{B}^{\T} v \mid \tilde{Z}) 
          - f_*(v, x) \leq 0.
     \end{equation}
\end{cor}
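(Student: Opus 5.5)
The plan is to reduce Corollary~\ref{cor:Ben_dual_arg} to Theorem~\ref{Ben_dual} by a change of variables. We translate the parameter set so that its distinguished relative-interior point moves to the origin, absorb the translation into the nominal value, and then check that every hypothesis of Theorem~\ref{Ben_dual} holds for the new data.

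First, the set identity. For $a \in \tilde{\mathcal{U}}$ write $a = \tilde{a}^0 + \tilde{B}\zeta$ with $\zeta \in \tilde{Z}^0$, and put $\zeta' \coloneqq \zeta - \zeta^0 \in \tilde{Z}$. Then
\[
a = (\tilde{a}^0 + \tilde{B}\zeta^0) + \tilde{B}\zeta'.
\]
Conversely, any $\zeta' \in \tilde{Z}$ gives $\zeta = \zeta' + \zeta^0 \in \tilde{Z}^0$, so the two descriptions of $\tilde{\mathcal{U}}$ coincide.

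Next, set $a^0 \coloneqq \tilde{a}^0 + \tilde{B}\zeta^0$, $B \coloneqq \tilde{B}$ and $Z \coloneqq \tilde{Z}$, and verify the hypotheses of Theorem~\ref{Ben_dual} one by one:
\begin{enumerate}
\item Concavity of $f(\cdot,x)$ is assumed directly.
\item The condition $a^0 \in \text{ri dom}(f(\cdot,x))$ for all $x$ is exactly the stated assumption on $\tilde{a}^0 + \tilde{B}\zeta^0$.
\item $Z$ is a translate of the nonempty compact convex set $\tilde{Z}^0$, hence itself nonempty, compact and convex.
\item Translation is an affine bijection, so it commutes with taking affine hulls and relative interiors: $\text{ri}(\tilde{Z}^0 - \{\zeta^0\}) = \text{ri}\,\tilde{Z}^0 - \{\zeta^0\}$. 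Since $\zeta^0 \in \text{ri}\,\tilde{Z}^0$, this gives $0 \in \text{ri}\,Z$.
\end{enumerate}

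With all hypotheses in place, Theorem~\ref{Ben_dual} applied to $\mathcal{U} = \tilde{\mathcal{U}}$ says that $\max_{a\in\tilde{\mathcal{U}}} f(a,x)\le 0$ holds if and only if some $v$ satisfies
\[
(a^0)^{\T} v + \delta^*(B^{\T} v \mid Z) - f_*(v,x) \le 0 .
\]
Substituting $a^0$, $B$ and $Z$ back in gives exactly \eqref{Ben_dual_arg}.

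The only step that is more than bookkeeping is the relative-interior claim in item~4. I would justify it in one line: the translation $\zeta \mapsto \zeta - \zeta^0$ maps $\text{aff}\,\tilde{Z}^0$ onto $\text{aff}\,\tilde{Z}$ and preserves relative neighborhoods, so it maps relative interiors onto relative interiors. Nothing else is needed; in particular, there is no need to rederive the Fenchel duality argument.
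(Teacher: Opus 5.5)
Your proposal is correct and follows essentially the same route as the paper: translate $\tilde{Z}^0$ by $\zeta^0$ to get $0 \in \text{ri}\,\tilde{Z}$, absorb the shift into the nominal value $\tilde{a}^0 + \tilde{B}\zeta^0$, and invoke Theorem~\ref{Ben_dual}. The paper leaves two details implicit that you state explicitly: that $\tilde{Z}$ stays nonempty, compact and convex, and that translation commutes with taking relative interiors.
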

\begin{proof}
     First, note that ${\zeta}^0 \in \text{\rm ri } \tilde{Z}^0$ implies $0 \in \text{\rm ri } \tilde{Z}$.
     Also, since
     $$\tilde{Z}^0 = \tilde{Z} + \{{\zeta}^0\}= \left\{ {\zeta} \in \R^l \mid
     {\zeta} = {\zeta}' + {\zeta}^0 , \ {\zeta}' \in \tilde{Z} \right\},$$
     we can rewrite $\tilde{\mathcal{U}}$ in terms of $\tilde{Z}$ as follows:
     \begin{align*}
          \tilde{\mathcal{U}} &= \left\{ a \in \R^m \bigmid a= \tilde{a}^0 + \tilde{B} {\zeta},
               \ {\zeta} \in \tilde{Z}^0 \right\} \\
          &= \left\{ a \in \R^m \bigmid a= \tilde{a}^0 + \tilde{B} ({\zeta}' + {\zeta}^0),
               \ {\zeta}' \in \tilde{Z} \right\} \\
          &= \left\{ a \in \R^m \ \big| \ a= (\tilde{a}^0 + \tilde{B} {\zeta}^0) + \tilde{B} {\zeta}',
               \ {\zeta}' \in \tilde{Z} \right\}.
     \end{align*}
     Thus, we obtain (\ref{Ben_dual_arg}) 
     by using Theorem~\ref{Ben_dual}.
\end{proof}

We now apply Corollary~\ref{cor:Ben_dual_arg} to rewrite
the proposed uncertainty set~\eqref{eq:rewrite_Omega} as follows.
We obtain $\tilde{\mathcal{U}} = \Omega$ by taking
$\tilde{a}^0 = 0$, $\tilde{B} = \mat{O & A^0} \in \R^{m \times 2m}$ and $\tilde{Z}^0 = Z^0$.
{Assuming $\tau > 0$ (since for $\tau = 0$, the robust constraint trivially reduces to the nominal constraint $f(a_0,x) \leq 0$),}
let $\zeta^0 = \mat{{\varepsilon e} \\ e} \in \R^{2m}$.  {From the definition of $Z_2^0$, by choosing  $\varepsilon > 0$ sufficiently small such that $\varepsilon \|Ae\| < \tau$},
we guanrantee $\zeta^0 \in \text{\rm ri } Z^0$ 
from~\cite[Theorems~6.5 and~6.8]{Roc70},
which implies that $0 \in \text{\rm ri } Z$ with $Z \coloneqq Z^0 - \{\zeta^0\}$.
Observe also that in this case $\tilde{a}^0 + \tilde{B} \zeta^0 = A^0 e = a^0 \in \text{\rm ri dom}(f(\cdot, x))$ for all $x$.
Then~\eqref{eq:rewrite_Omega} can be rewritten as 
\begin{equation}\label{eq:omega_use_cor}
     {\Omega (a^0, \tau, \delta, A, V)} = \left\{ {a} = a^0 + \mat{O & A^0} \mat{\eta \\{\xi}} \bigmid
     {\zeta} = \mat{\eta \\{\xi}} \in Z \right\}, 
\end{equation}
where 
\begin{align}
     Z & \coloneqq Z_1 \cap Z_2 \cap Z_3, 
          \label{Z_intersection} \\
     Z_1 &\coloneqq \left\{ {\zeta} = \mat{\eta \\{\xi}} \in \R^{2m} \bigmid
          ({\xi}_i+1) - \ln ({\xi}_i+1) - 1 - \eta_i { - \varepsilon } \leq 0 \
          (i = 1, \dots, m) \right\}, 
           \label{Z_1} \\
     Z_2 &\coloneqq \left\{ {\zeta} = \mat{\eta \\{\xi}} \in \R^{2m} \bigmid
          \|A(\eta { + \varepsilon e})\| \leq \tau \right\}, 
          \label{Z_2} \\
     Z_3 &\coloneqq \left\{ {\zeta} = \mat{\eta \\{\xi}} \in \R^{2m} \bigmid
          \|V{\xi} \| \leq \delta \right\}.
          \label{Z_3}
\end{align}
\subsubsection{Computation of the support function}
We now compute the support function $\delta^*(y \mid Z)$ 
of the set $Z$ in \eqref{eq:omega_use_cor},
which is required to obtain the dual expression~\eqref{Ben_dual_arg}.
By~\cite[Lemma 6.4]{Ben15}, 
$\delta^*(y \mid Z)$ can be expressed as the sum of the support functions 
of the three sets $Z_1$, $Z_2$, and $Z_3$.
The following three lemmas are the results 
of the calculation of each support function.
%

\begin{lemma} \label{lemma_delta^*_1}
     The support function $\delta^*(y \mid Z_1)$ of the set $Z_1$ 
     defined in \eqref{Z_1} is given as follows
     \begin{equation} \label{eq:delta^*_1}
          \delta^*(y \mid Z_1) = \min_{
          \substack{
          {u \in \R^m_{++}}, \\
          w \in \R^m}
          } 
          \left\{ \sum_{k = 1}^m 
          \left(\varepsilon u_k -  u_k \ln (1 - \frac{w_k}{u_k} ) - w_k \right) \bigmid
          y = {\mat{-u\\ w }}, \ 
          {u-w \in \R^m_{++} } \right\} .
     \end{equation}
\end{lemma}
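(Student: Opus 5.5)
The plan is to compute $\delta^*(y \mid Z_1)$ directly from the definition. The constraints defining $Z_1$ in \eqref{Z_1} are separable across $k = 1, \dots, m$, and each involves only the pair $(\eta_k, \xi_k)$. So I split $y = \mat{p \\ w}$ with $p, w \in \R^m$ and write
\begin{equation*}
\delta^*(y \mid Z_1) = \sum_{k=1}^m \sup_{\eta_k, \xi_k} \left\{ p_k \eta_k + w_k \xi_k \bigmid \eta_k \geq h(\xi_k + 1) - \varepsilon \right\},
\end{equation*}
where $h(t) \coloneqq t - \ln t - 1$ (this is the function $g$ of \eqref{func:g}) and $\xi_k > -1$ is implicit. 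This reduces the lemma to a one-dimensional computation for each $k$.

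First I handle the sign of $p_k$. The variable $\eta_k$ is only bounded below, so the supremum is $+\infty$ whenever $p_k > 0$. Hence finiteness forces $p_k = -u_k$ with $u_k \geq 0$, which explains the parametrization $y = \mat{-u \\ w}$. For $u_k \geq 0$ the supremum over $\eta_k$ is attained at $\eta_k = h(\xi_k + 1) - \varepsilon$. This leaves
\begin{equation*}
\varepsilon u_k + \sup_{\xi_k > -1} \left\{ w_k \xi_k - u_k\, h(\xi_k + 1) \right\}.
\end{equation*}

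Next I substitute $t = \xi_k + 1 > 0$, which turns the inner problem into $\sup_{t > 0} \{ (w_k - u_k)(t - 1) + u_k \ln t \}$. For $u_k > 0$ this objective is strictly concave in $t$. It tends to $-\infty$ as $t \downarrow 0$. As $t \to \infty$ it tends to $-\infty$ exactly when $w_k < u_k$; if $w_k \geq u_k$ it is unbounded above. This is where the domain condition $u - w \in \R^m_{++}$ comes from. When $w_k < u_k$, the stationarity condition $w_k - u_k + u_k/t = 0$ gives $t^* = u_k/(u_k - w_k)$. Substituting back gives the value $-w_k - u_k \ln(1 - w_k/u_k)$. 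Adding $\varepsilon u_k$ and summing over $k$ yields exactly the summand in \eqref{eq:delta^*_1}.

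The ``$\min$'' in \eqref{eq:delta^*_1} is only notational, because $y$ determines $(u, w)$ uniquely. I expect the main obstacle to be the boundary case $u_k = 0$, which the statement excludes by requiring $u \in \R^m_{++}$. In that case the inner supremum is $\sup_{\xi_k > -1} w_k \xi_k$. It equals $0$ if $w_k = 0$, equals $-w_k$ if $w_k < 0$, and is $+\infty$ if $w_k > 0$. These finite values are precisely the limits of the formula as $u_k \downarrow 0$, since $-u_k \ln(1 - w_k/u_k) \to 0$. So I would point out that the formula is to be read as its lower semicontinuous closure, the perspective-function convention used in \cite{Ben15}. Equivalently, restricting to $u \in \R^m_{++}$ loses nothing in the dual constraint \eqref{Ben_dual_arg}, because the resulting infimum over $v$ is unchanged by taking closures. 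I would state this convention explicitly rather than claim the identity pointwise on the boundary.
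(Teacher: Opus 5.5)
Your computation is correct, but it reaches the formula by a more direct route than the paper. The paper never evaluates the supremum itself. It writes $Z_1$ as $\{\zeta : h_k(\zeta)\le 0,\ k=1,\dots,m\}$ with $h_k(\zeta)=h_\eta(\zeta_k)+h_\xi(\zeta_{m+k})$. It then imports the separable-case formula of \cite[Lemma 9]{Ben15}, which expresses $\delta^*(y\mid Z_1)$ through the perspective terms $u_k\bigl(h_\eta^*(W_{k,k}/u_k)+h_\xi^*(W_{k,m+k}/u_k)\bigr)$. Finally it computes the two conjugates: $h_\eta^*$ is finite only at $-1$, which forces $y_k=-u_k$, and $h_\xi^*(s)=-\ln(1-s)-s$ for $s<1$, which forces $w_k<u_k$.

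Your inner problem $\sup_{\xi_k>-1}\{w_k\xi_k-u_k h(\xi_k+1)\}$ equals $u_k h_\xi^*(w_k/u_k)$ for $u_k>0$, so the one-dimensional calculus is identical. The difference is that you derive the perspective structure from the definition (separability, then eliminating $\eta_k$ by monotonicity) instead of quoting it. The paper's route is modular and matches how $Z_2$, $Z_3$ and Theorem~\ref{thm_dual} are handled. Yours is self-contained, needs no duality result or constraint qualification, and makes the boundary $u_k=0$ visible. Your boundary analysis is right. It shows that the lemma read literally (with $u\in\R^m_{++}$ and $\min\emptyset=+\infty$) returns $+\infty$ at points with $y_k=0$ and $y_{m+k}\le 0$, where the true support function is finite. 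The paper's proof inherits this by writing $u\in\R^m_{++}$ in the cited formula.

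One caution about your last sentence. Keeping $\delta^*$ as an infimum value inside \eqref{Ben_dual_arg} is indeed harmless. However, in Theorem~\ref{thm_dual} the minimum is dropped and $u,w,s^1,s^2$ become existentially quantified variables. Attainment therefore matters, and the restriction $u\in\R^m_{++}$ can destroy it.

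For instance, take $f(a,x)=a^\T x-b$, $V=O$, $b=0$ and $x=0$. The robust constraint holds trivially. Yet the dual system forces $v=0$ and $w=0$, hence $\tau\|s^1\|_*\le 0$ with $A^\T s^1=u\in\R^m_{++}$, which is impossible for $\tau>0$.

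So instead of asserting that nothing is lost, you should allow $u\in\R^m_+$. For the $k$th term at $u_k=0$, assign the value you computed: $-w_k$ if $w_k\le 0$, and $+\infty$ otherwise.
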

\begin{proof}
     We can represent $Z_1$ in terms of $m$ convex functions as follows:
     \[ 
          Z_1 = \left\{ {\zeta} \in \R^{2m} \bigmid
          h_k({\zeta}) \leq 0, \: k = 1, \dots, m \right\}, 
     \]
     with
     \[
          h_k({\zeta}) \coloneqq ({\zeta}_{m+k} + 1) - \ln ({\zeta}_{m+k} + 1) - 1 - {\zeta}_k { - \varepsilon}, \quad 
               k = 1, \dots, m. 
     \]
     Moreover, for each $k = 1, \dots, m$, the function $h_k$ can be written as
     \[ h_k({\zeta}) = {h_\eta(\zeta_k) + h_\xi (\zeta_{m+k})}, \]
     where $h_\eta \colon \R \to \R$ and $h_\xi \colon \R \to \R$ are defined respectively as
     \[
          h_{{\eta}} (t) \coloneqq - t { - \varepsilon}, \quad 
          h_{{\xi}} (t) \coloneqq (t + 1) - \ln (t + 1) - 1.
     \]
     Then from the result for the ``separable case'' in{~\cite[Lemma 9]{Ben15}},
     the support function $\delta^*(y \mid Z_1)$ is given by
     \begin{equation} \label{delta^*}
          \delta {^*} (y \mid Z_1) = \min_{
          \substack{
          {u \in \R^m_{++}}, \\
          W \in \R^{m \times 2m}}
          } 
          \left\{ 
               \sum_{k = 1}^m
               {u_k \left( h_{\eta}^* (\frac{W_{k,k}}{u_k} ) + h_{\xi}^* (\frac{W_{k,m+k}}{u_k} ) \right)}
               \bigmid 
               {
               W = \mat{\diag(y^{(1)}) & \diag(y^{(2)})}}
        \right\}
     \end{equation}
     where $y^{(1)}\coloneqq [y_1,\ldots,y_m]^\T \in \R^m$ and $y^{(2)}\coloneqq [y_{m+1},\ldots,y_{2m}]^\T \in \R^m$.
     Hence, we need to derive the conjugate functions {$h^*_{\eta}$ and $h^*_{\xi}$}. {We obtain}
     \begin{align*}
          h_{{\eta}}^*(s) &= \sup_{t \in \R} \{ s t + t {+ \varepsilon} \} \\
          &= 
          \begin{cases}
               {\varepsilon}, & s = -1, \\
               + \infty, & \mbox{otherwise},
          \end{cases}
     \end{align*}
     and 
     \begin{align*}
          h_{{\xi}}^*(s) &= \sup_{t > - 1} 
          \left\{ s t - ((t + 1) - \ln (t + 1) -1) \right\} \\
          &= 
          \begin{cases}
               - \ln (1 - s) - s, & s < 1, \\
               + \infty, & \mbox{otherwise},
          \end{cases}
     \end{align*}
     which from (\ref{delta^*}) implies
     \begin{equation} \label{delta^*_2}
          \delta^*(y \mid Z_1) = 
          \min_{
          \substack{
          {u \in \R^m_{++}}, \\
          {W \in \R^{m \times 2m}}
          }}
          \left\{- \sum_{k = 1}^m 
          u_k \left[ \ln \left(1 - \frac{{W_{k,m+k}}}{u_k} \right)
          + \frac{{W_{k,m+k}}}{u_k} {- \varepsilon} \right] \bigmid 
          {(} u, {W} {)} \in \mathcal{S} (y) 
          \right\}, 
     \end{equation}
     where the set $\mathcal{S} (y)$ is {defined} by 
     \begin{equation*}
          \mathcal{S} (y) = \left\{ 
               (u, W) \in \R^m \times \R^{m \times 2m} \bigmid 
               \begin{aligned}
                    & {
                    W = \mat{\diag(y^{(1)}) & \diag(y^{(2)})}},\\ 
                    &\frac{{W_{k,k}}}{u_k} = -1, \ \frac{{W_{k, m+k}}}{u_k} < 1, \quad 
                         k = 1, \dots, m
               \end{aligned}
          \right\}.
     \end{equation*}
     Note that $\mathcal{S} (y)$ {can be} rewritten as
     \begin{equation*}
          \mathcal{S} (y) = \left\{ 
               (u, W) \in \R^m \times \R^{m \times 2m} \bigmid 
               \begin{aligned}
                    y_l &= 
                    \begin{cases}
                         -u_l, &  l = 1, \dots, m \\
                         {W_{l - m,l}}, & l = m+1, \dots, 2m
                    \end{cases}, \\
                    u_k &> {W_{k, m + k}}, \ k = 1, \dots, m.
               \end{aligned}
          \right\}.
     \end{equation*}
     Thus from (\ref{delta^*_2}) we have
     \begin{align*}
          \delta^*(y \mid Z_1) =
          \min_{
          \substack{
          {u \in \R^m_{++}}, \\
          {W \in \R^{m \times 2m}}
          }} & \left\{ 
               - \sum_{k = 1}^m
               \left[
                    u_k \ln \left(1 - \frac{{W_{k,m+k}}}{u_k} \right) { - \varepsilon u_k}
                    + {W_{k,m+k}} 
               \right] \bigmid 
          \right. \\
               & \quad y = \mat{-u^{\T}, {W_{1,m+1}}, \dots, {W_{m, 2m}} }^{\T}, \:\:
               {u_k} > {W_{k, m + k}}, \ k = 1, \dots, m
          \Bigg\},
     \end{align*}
     which implies (\ref{eq:delta^*_1})
     with $w \coloneqq \left( {W_{1,m+1}}, \dots, {W_{m, 2m}} \right)^{\T}$.
\end{proof}
%
%
\begin{lemma} \label{lemma_delta^*_2}
     The {support} function $\delta^*(y \mid Z_2)$ of the set $Z_2$ 
     defined in \eqref{Z_2} is given as follows:
     \begin{equation} \label{eq:delta^*_2}
          \delta^*(y \mid Z_2) = \inf_s 
          \left\{ \tau \| s \|_*  - \varepsilon e^\T A^\T s\bigmid
          \mat{A^{\T}\, \\ O} s = y \right\}
     \end{equation}
     where $y \in \R^{2m}$.
\end{lemma}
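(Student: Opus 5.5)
Our plan is to compute $\delta^*(y\mid Z_2)=\sup\{y^\T\zeta \mid \|A(\eta+\varepsilon e)\|\le\tau\}$ directly by Lagrangian (conic) duality. Write $y=(y^{(1)},y^{(2)})$ with $y^{(1)},y^{(2)}\in\R^m$. The set $Z_2$ places no restriction on $\xi$. Hence, if $y^{(2)}\neq 0$, the supremum is $+\infty$; this is consistent with the right-hand side of \eqref{eq:delta^*_2}, whose constraint forces the lower block of $y$ to vanish, so the feasible set is empty and the infimum equals $+\infty$. It therefore suffices to treat $y=(y^{(1)},0)$, where the problem reduces to $\sup_\eta\{(y^{(1)})^\T\eta \mid \|A(\eta+\varepsilon e)\|\le\tau\}$.

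For this reduced problem we introduce the auxiliary variable $r=A(\eta+\varepsilon e)$ and form the Lagrangian with multiplier $s\in\R^m$ for the equality constraint:
\[
L(\eta,r,s)=(y^{(1)})^\T\eta+s^\T\bigl(r-A\eta-\varepsilon Ae\bigr),\qquad \|r\|\le\tau .
\]
Maximizing over the free variable $\eta$ gives $+\infty$ unless $A^\T s=y^{(1)}$. Maximizing over the ball $\|r\|\le\tau$ gives $\tau\|s\|_*$, by the definition of the dual norm. The dual function is therefore $\tau\|s\|_*-\varepsilon e^\T A^\T s$ on the affine set $\{A^\T s=y^{(1)}\}$, and taking the infimum over $s$ yields exactly \eqref{eq:delta^*_2}. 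Weak duality gives ``$\le$'' at once.

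The main obstacle is strong duality, together with the case where $y^{(1)}\notin\operatorname{range}(A^\T)$ and $A$ is singular. For strong duality, note that $\tau>0$ is assumed, so the point $\eta=-\varepsilon e$ (that is, $r=0$) is strictly feasible, and Slater's condition holds for this convex problem. Standard Lagrangian duality (for instance \cite[Theorem 28.2]{Roc70}, or the conic duality theorem for the norm cone) then gives equality, with the dual infimum attained whenever the primal value is finite.

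For the range case: if $y^{(1)}\notin\operatorname{range}(A^\T)$, pick $d\in\ker A$ with $(y^{(1)})^\T d>0$. Moving $\eta$ along $d$ preserves feasibility, so the primal value is $+\infty$; the dual feasible set is empty, so the infimum is also $+\infty$, and the two sides agree.

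If instead we prefer to rely only on results already cited in the paper, an alternative is a translation argument. We have $Z_2=\{\zeta\mid \|A\eta\|\le\tau\}-\{(\varepsilon e,0)\}$, so
\[
\delta^*(y\mid Z_2)=\delta^*(y\mid\{\|A\eta\|\le\tau\})-\varepsilon e^\T y^{(1)}.
\]
We would then invoke the known formula from \cite{Ben15} for the support function of a set of the form $\{\|A\eta\|\le\tau\}$, which is $\inf\{\tau\|s\|_*\mid A^\T s=y^{(1)}\}$. Finally, we substitute $e^\T y^{(1)}=e^\T A^\T s$, which holds on the feasible set, to obtain \eqref{eq:delta^*_2}.
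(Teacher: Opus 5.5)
Your proof is correct. Your fallback translation route is exactly the paper's proof, but your primary route differs from it.

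\textbf{The paper's proof.} The paper writes $Z_2=\tilde Z_2-\{(\varepsilon e,0)\}$ with $\tilde Z_2=\{\tilde\zeta \mid \|M\tilde\zeta\|\le\tau\}$ and $M=\mat{A & O}$. It expresses the indicator of $\tilde Z_2$ as $q(M\tilde\zeta)$, with $q$ the indicator of the $\tau$-ball. It then applies \cite[Lemma 6.7]{Ben15} to get $\delta^*(y\mid\tilde Z_2)=\inf_s\{\tau\|s\|_*\mid M^\T s=y\}$. Finally it replaces $\varepsilon e^\T y^{(1)}$ by $\varepsilon e^\T A^\T s$ on the feasible set.

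\textbf{Your primary route.} You derive the formula from scratch. You dualize the single equality $r=A(\eta+\varepsilon e)$ over $\eta$ free and $\|r\|\le\tau$. You get equality from Slater's condition at $\eta=-\varepsilon e$, $r=0$, for instance via \cite[Theorem 28.2]{Roc70} with $C=\R^m\times\{\|r\|\le\tau\}$.

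\textbf{What each approach buys.} Your route is self-contained, and it makes explicit the constraint qualification behind the exact (non-closure) form of the conjugate-composition rule that Lemma 6.7 encodes. That condition is that some $M\tilde\zeta$, here $0$, lies in the interior of the $\tau$-ball, which holds because $\tau>0$. The paper leaves this implicit. Your route also handles directly the cases $y^{(2)}\neq 0$ and $y^{(1)}\notin\mathrm{range}(A^\T)$, the latter being relevant when $A$ is singular. The paper's route is shorter and stays within the same modular support-function calculus of \cite{Ben15} used for $Z_1$, $Z_3$ and for summing the three pieces.

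One small remark: your separate range-case argument is already covered by weak duality once the primal value is $+\infty$. It is harmless but redundant.
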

\begin{proof}
     {For any $\zeta = \mat{\eta \\ \xi} \in Z_2$, let $\tilde \zeta \coloneqq \mat{\tilde{\eta} \\ \xi}$ where $\tilde \eta \coloneqq \eta + \varepsilon e$. By defining $M \coloneqq \mat{A & O}$,
      we can express $Z_2$ as
     \[ 
          Z_2 = \left\{ \tilde{\zeta} - \mat{\varepsilon e \\ 0} \in \R^{2m} \bigmid
          \| M \tilde{\zeta} \| \leq \tau \right\}
     \]
     as a translation of a centered set $\tilde{Z}_2$ defined as
     $$\tilde{Z}_2 \coloneqq \left\{ \tilde{\zeta} \in \R^{2m} \bigmid
          \| M \tilde{\zeta} \| \leq \tau \right\}.
     $$
     }
     The definition of support function gives the relationship between the support functions of~$Z_2$ and~$\tilde{Z}_2$ as
     \begin{equation} \label{eq:translation_Z2}
         \delta^* (y \mid Z_2) = \delta^* \bigg(y \: \bigg| \: \tilde{Z_2} - \mat{\varepsilon e\\ 0} \bigg) = \delta^* (y \mid \tilde{Z_2}) - y^\T \mat{\varepsilon e \\ 0}.
     \end{equation}
     For the set $\tilde{Z}_2$, the indicator function is
     given by
     \begin{align*}
          \delta(\tilde{\zeta} \mid \tilde{Z}_2) &=
          \begin{cases}
               0, & \| M \tilde{\zeta} \| \leq \tau, \\
               +\infty, & \mbox{otherwise},
          \end{cases} \\
          &=  q(M \tilde{\zeta}),
     \end{align*}
     where
     \[ 
          q(\tilde{\zeta}) = 
          \begin{cases}
               0, & \|\tilde{\zeta} \| \leq \tau, \\
               +\infty, & \mbox{otherwise}.
          \end{cases}
     \]
     Therefore, from~\cite[Lemma 6.7]{Ben15}, the support function for $\tilde{Z}_2$ is
     \begin{align*}
          \delta^*(y \mid \tilde{Z}_2) &=
          \inf_s \left\{ q^*(s) \bigmid M^{\T} s = y \right\} \\
          &= \inf_s \left\{ \tau \| s \|_* \bigmid M^{\T} s = y \right\}
     \end{align*}
     Substituting the above equation back into \eqref{eq:translation_Z2}, we obtain
     \begin{align*}
          \delta^*(y \mid {Z}_2) &=
          \inf_s \left\{ \tau \|s \|_* - y^\T \mat{\varepsilon e\\ 0} \bigmid \mat{A^\T\, \\ O}  s = y \right\} \\
          &= \inf_s \left\{ \tau \| s \|_* - \varepsilon e^\T A^\T s \bigmid \mat{A^\T\, \\ O} s = y \right\},
     \end{align*}
     which is equivalent to (\ref{eq:delta^*_2}).
\end{proof}
%
%
\begin{lemma} \label{lemma_delta^*_3}
     The {support} function $\delta^*(y \mid Z_3)$ of the set $Z_3$ 
     defined in \eqref{Z_3} is given as follows:
     \begin{equation} \label{eq:delta^*_3}
          \delta^*(y | Z_3) =
          \inf_s \left\{ \delta \| s \|_* \bigmid
          \mat{O \\ V^\T} s = y \right\},
     \end{equation}
     where $y \in \R^{2m}$.
\end{lemma}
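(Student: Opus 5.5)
The plan is to follow the proof of Lemma~\ref{lemma_delta^*_2} almost verbatim. The only differences are that no translation is needed, since $Z_3$ is already centered at the origin in the $\xi$-coordinates, and that the relevant linear map acts on the $\xi$-block instead of the $\eta$-block.

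\textbf{Step 1: write $Z_3$ as a norm ball composed with a linear map.} Define $N \coloneqq \mat{O & V} \in \R^{m \times 2m}$, so that $N\zeta = V\xi$ for $\zeta = \mat{\eta \\ \xi}$. Then
\[
Z_3 = \left\{ \zeta \in \R^{2m} \bigmid \| N \zeta \| \leq \delta \right\}.
\]
Its indicator function is $\delta(\zeta \mid Z_3) = q_\delta(N\zeta)$, where $q_\delta$ is the indicator of the ball $\{ s \in \R^m \mid \|s\| \leq \delta \}$.

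\textbf{Step 2: apply the conjugate-of-composition result.} Invoke \cite[Lemma 6.7]{Ben15}, exactly as in the proof of Lemma~\ref{lemma_delta^*_2}, to obtain
\[
\delta^*(y \mid Z_3) = \inf_s \left\{ q_\delta^*(s) \bigmid N^\T s = y \right\}.
\]

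\textbf{Step 3: compute the conjugate of the ball indicator.} We have $q_\delta^*(s) = \sup_{\|t\| \leq \delta} s^\T t = \delta \|s\|_*$ by the definition of the dual norm.

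\textbf{Step 4: substitute.} Since $N^\T = \mat{O \\ V^\T}$, substituting Step 3 into Step 2 gives exactly \eqref{eq:delta^*_3}.

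\textbf{Main obstacle.} The only delicate point is the applicability of the composition formula from \cite[Lemma 6.7]{Ben15}, in particular whether the infimum is attained or merely approached. In Lemma~\ref{lemma_delta^*_2} the paper uses the same formula with ``$\inf$'', so I would rely on it in the same way. The formula is justified because $q_\delta$ is a closed proper convex function and $0 \in \text{ri}\,\text{dom}\, q_\delta$ lies in the range of $N$ (take $\zeta = 0$), which is the constraint qualification needed for the conjugate of $q_\delta \circ N$ to equal the infimal expression above.

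Note also that the statement gives $\delta^*(y \mid Z_3) = +\infty$ when $y$ is not in the range of $N^\T$, for example when the $\eta$-block of $y$ is nonzero. This is consistent with $Z_3$ being unbounded in the $\eta$-direction, so no separate case analysis is required.
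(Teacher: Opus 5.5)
Your proposal is correct and is essentially the paper's own proof. The paper rewrites $Z_3 = \{\zeta \in \R^{2m} \mid \|N\zeta\| \le \delta\}$ with $N = \mat{O & V}$ and applies the same composition rule from \cite[Lemma 6.7]{Ben15} used for $Z_2$, without the translation step, to get $\inf_s\{\delta\|s\|_* \mid N^\T s = y\}$. Your explicit check of the constraint qualification, $0 \in \mathrm{ri}\,\mathrm{dom}\, q_\delta$, which holds even when $\delta = 0$, is a detail the paper leaves implicit, and it also guarantees that the infimum is attained.
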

\begin{proof}
     Expression (\ref{eq:delta^*_3}) is easily verified in a similar way to
     (\ref{eq:delta^*_2}).
     Thus, $Z_3$ can be rewritten as
     \[ 
          Z_3 = \left\{ {\zeta} \in \R^{2m} \bigmid
          \left\| \mat{O & V} {\zeta} \right\| \leq \delta \right\},
     \]
     and then we have
     \[
          \delta^*(y \mid Z_3) =
          \inf_s \left\{ \delta \| s \|_* \bigmid N^{\T} s = y \right\},
     \]
     where $N = \mat{O & V}$.
\end{proof}
%

Using the formulas of the support function of~$Z$ given 
by Lemmas \ref{lemma_delta^*_1}--\ref{lemma_delta^*_3}, 
we can finally obtain the final dual expression.
\begin{thm} \label{thm_dual}
     Suppose that the function $f \colon \R^m \times \R^n \rightarrow \R$
     is such that $f(\cdot, x)$ is concave for all $x$, 
     and assume that the nominal value $a^0$ satisfies
     $a^0 \in \text{\rm ri dom}(f(\cdot, x))$
     for all~$x$.
     Then, $x$ satisfies the robust constraint with the uncertainty set \eqref{eq:omega_general}, i.e.,
     \begin{equation*}
          \max_{a \in \Omega(a^0, \tau,\delta, A, V)} f(a,x) \leq 0
     \end{equation*}
     if and only if 
     there exist vectors $u, v, w, s^1, s^2 \in \R^m$ that satisfy
     \begin{equation} \label{robustconst_dual}
          \begin{split}
               (a^0)^{\T} v
               - \sum_{k = 1}^m
                    \left\{ u_k \ln \left(1 - \frac{w_{k}}{u_k} \right) 
                    + w_{k} \right\}
               + \tau \| s^1 \|_*
               + &\delta \| s^2 \|_*
               - f_*(v, x) \leq 0 , \\
               A^{\T} s^1 &= u, \\
               w + V^{\T}s^2 &= A^0 v, \\
               u  & \in \R^m_{++}, \\
               u - w & \in \R^m_{++}.
          \end{split}
     \end{equation}
\end{thm}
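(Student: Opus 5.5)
The plan is to apply Corollary~\ref{cor:Ben_dual_arg} to the rewritten representation \eqref{eq:omega_use_cor}. We take $\tilde a^0=0$, $\tilde B=\mat{O & A^0}$, $\tilde Z^0=Z^0$ and $\zeta^0=\mat{\varepsilon e\\ e}$ with $\varepsilon\|Ae\|<\tau$. The case $\tau=0$ is set aside, since the constraint is then nominal. As already observed, $\zeta^0\in\text{ri }Z^0$ and $\tilde a^0+\tilde B\zeta^0=a^0\in\text{ri dom}(f(\cdot,x))$. We also need $Z^0$ to be compact. This should follow because $\|A\eta\|\le\tau$ bounds $\eta$ when $A$ is nonsingular, and $g(\xi_i)\le \eta_i$ then confines $\xi_i$ to a compact subinterval of $(0,\infty)$. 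If $A$ is singular, the $V$-constraint or an extra assumption may be needed; I would state nonsingularity as implicitly assumed. The Corollary then says that $x$ is robust feasible iff some $v\in\R^m$ satisfies
\begin{equation*}
(a^0)^\T v+\delta^*(\tilde B^\T v\mid Z)-f_*(v,x)\le 0,\qquad \tilde B^\T v=\mat{0\\ A^0 v}.
\end{equation*}

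Next, I would expand the support function of the intersection $Z=Z_1\cap Z_2\cap Z_3$ by \cite[Lemma 6.4]{Ben15}. It is the infimal convolution
\begin{equation*}
\delta^*(y\mid Z)=\min\Bigl\{\textstyle\sum_{j=1}^3\delta^*(y^j\mid Z_j)\Bigm| y^1+y^2+y^3=y\Bigr\}.
\end{equation*}
This needs a relative-interior intersection condition, and $\zeta^0-\zeta^0=0$ lies in the relative interior of each shifted set, so the condition holds. Substituting Lemmas~\ref{lemma_delta^*_1}--\ref{lemma_delta^*_3} gives $y^1=\mat{-u\\ w}$, $y^2=\mat{A^\T s^1\\ 0}$ and $y^3=\mat{0\\ V^\T s^2}$. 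Matching the sum with $y=\mat{0\\ A^0v}$ blockwise yields $-u+A^\T s^1=0$ and $w+V^\T s^2=A^0v$. These are exactly the linear constraints in \eqref{robustconst_dual}, together with $u\in\R^m_{++}$ and $u-w\in\R^m_{++}$ from Lemma~\ref{lemma_delta^*_1}.

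Finally, I would collect the objective terms. From Lemma~\ref{lemma_delta^*_1} we get $\sum_k(\varepsilon u_k-u_k\ln(1-w_k/u_k)-w_k)$, from Lemma~\ref{lemma_delta^*_2} we get $\tau\|s^1\|_*-\varepsilon e^\T A^\T s^1$, and from Lemma~\ref{lemma_delta^*_3} we get $\delta\|s^2\|_*$. Since $A^\T s^1=u$, the $\varepsilon$-terms cancel: $\varepsilon e^\T u-\varepsilon e^\T A^\T s^1=0$. This cancellation is what makes the final expression independent of the auxiliary shift. Because the constraint is of the form ``$\min\le 0$'', it is equivalent to the existence of feasible $(u,w,s^1,s^2)$ with the sum $\le0$. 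The forward direction needs the minimum to be attained, or an argument through the infimum. This is the main obstacle I anticipate, since Lemmas~\ref{lemma_delta^*_2} and \ref{lemma_delta^*_3} are stated with infima and the infimal convolution may not obviously be attained.

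To handle attainment, I would first invoke the attainment part of the cited Fenchel results, since support functions of compact sets with ri conditions have exact infimal convolutions. If that is unavailable, the fallback is to note that the dual representations are closed-proper-convex formulas. In that case the infimum-form inequality could be replaced by a slightly relaxed strict version, using that attainment holds for norm-conjugate problems over linear constraints with feasible points.
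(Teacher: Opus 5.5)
Your argument follows the paper's proof step for step. It uses the same application of Corollary~\ref{cor:Ben_dual_arg} with $\tilde a^0=0$, $\tilde B=\mat{O & A^0}$, $\zeta^0=\mat{\varepsilon e\\ e}$, the same splitting of $\delta^*(\cdot\mid Z)$ by \cite[Lemma 6.4]{Ben15}, blockwise matching against $\tilde B^\T v=\mat{0\\ A^0v}$, and the cancellation of the $\varepsilon$-terms. At that point the paper simply drops the minimum by the ``general principle'' of \cite{Ben15}. Your remark that compactness of $Z^0$ needs something like nonsingular $A$ identifies a hypothesis the paper uses tacitly, and the paper likewise sets $\tau=0$ aside. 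The genuine gap is exactly the attainment step you flag, and your first fix does not close it. \cite[Theorems~16.3 and~16.4]{Roc70} do give attainment of the infimal convolution over $(y^1,y^2,y^3)$ and of the $s$-problems in Lemmas~\ref{lemma_delta^*_2}--\ref{lemma_delta^*_3}; you were worried about the wrong lemmas. The problem is Lemma~\ref{lemma_delta^*_1}: its formula is only an infimum over the open set $u\in\R^m_{++}$, $u-w\in\R^m_{++}$, and the minimizer may need $u_k=0$. In fact the ``only if'' direction, as stated, is false.

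Here is a counterexample. Take $m=2$, $A=I$, the $\ell_2$-norm, $V=O$, and $f(a,x)=a_1x_1-b$ with $x_1>0$ and $b=a^0_1x_1\,g_+^{-1}(\tau)$, where $g_+^{-1}(\tau)>1$ is as in Theorem~\ref{thm:a_bound}. Then $\max_{a\in\Omega}f(a,x)=0$, so $x$ is robust feasible. Since $f_*(v,x)=-\infty$ unless $v=(x_1,0)^\T$, any solution of \eqref{robustconst_dual} has $w=(a^0_1x_1,0)^\T$ and $s^1=u$. The $k=2$ logarithmic term then vanishes, and the left-hand side is at least $-u_1\ln(1-a^0_1x_1/u_1)+\tau\|u\|_2-b$. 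Since $u_2>0$ is required, this strictly exceeds $-u_1\ln(1-a^0_1x_1/u_1)+\tau u_1-b$. The minimum of the latter over $u_1>a^0_1x_1$ is $a^0_1x_1\,g_+^{-1}(\tau)-b=0$. Hence the left-hand side is positive for every admissible $(u,v,w,s^1,s^2)$, even though $x$ is robust feasible.

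What can actually be proved is a modified statement. One version takes $u\in\R^m_+$; when $u_k=0$ one requires $w_k\le 0$ and reads $u_k\ln(1-w_k/u_k)$ as its limit $0$. This is the closed perspective form, in which the minimum is attained thanks to Slater's condition for $Z_1$ at $\zeta=0$. Alternatively, one can state the result with the infimum. Your fallback of a ``slightly relaxed strict version'' changes the statement in this way rather than proving it as written. The ``if'' direction needs no attainment and goes through by weak duality as you have it.
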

\begin{proof}
     From Lemmas \ref{lemma_delta^*_1}--\ref{lemma_delta^*_3}
     and the definition of $Z$ in~\eqref{eq:omega_use_cor},
     and by using{~\cite[Lemma 6.4]{Ben15}}, 
     the support function of $Z$ is written as
     \begin{align}
          & \delta^*(y \mid Z) \notag \\
          = & \min_{y^1, y^2, y^3} \min_{
          \substack{
          {u \in \R^m_{++}},\\
          {u-w \in \mathbb{R}^m_{++}},\\
          w, s^1, s^2
          }} 
          \left\{ \varepsilon e^\T u  - \sum_{k = 1}^m
          \left\{ u_k \ln \left(1 - \frac{w_k}{u_k} \right) + w_k \right\}
          +\tau \| s^1 \|_* - \varepsilon e^\T A^\T s^1
          +\delta \| s^2 \|_*  \bigmid \right. 
               \notag \\ 
          & \hspace{90pt} y^1 + y^2 + y^3 = y, \ 
          y^1 = \mat{ - u \\ w }, \
          \mat{A^{\T} \\ O} s^1 = y^2, \ 
          \mat{O \\ V^\T} s^2 = y^3 \Bigg\} 
               \notag \\
          = & \min_{
          \substack{
          y^1, y^2, y^3,\\
          {u \in \R^m_{++}},\\
          {u-w \in \mathbb{R}^m_{++},}\\
           w, s^1, s^2
          }} \left\{ 
          \varepsilon e^\T (u - A^\T s^1) - \sum_{k = 1}^m
          u_k \left\{ \ln \left(1 - \frac{w_k}{u_k} \right) + w_k \right\}
          + \tau \| s^1 \|_*
          + \delta \| s^2 \|_*  \ \bigmid \right. 
               \notag\\ 
          & \hspace{52pt} \mat{A^{\T} s^1 - u  \\ w + V^{\T} s^2 } = y \Bigg\}
               \label{delta^*_Z}. 
     \end{align}
     Recall that in our formulation~\eqref{eq:omega_use_cor}, we set $\tilde{a}^0 = 0$, $\tilde{B} = \mat{O & A^0}$ and $\zeta^0 = \mat{\varepsilon e \\ e}$. It is then straightforward that 
     $$
     a^0 = \tilde{a}^0 + \tilde{B} \zeta^0 \in \text{\rm ri dom}(f(\cdot, x))
     $$
     for all~$x$. Hence, by applying Corollary~\ref{cor:Ben_dual_arg}, the robust constraint is satisfied if and only if there exists $v \in \R^m$ such that
     $$
     (a^0)^{\T} v + \delta^*(\tilde{B}^{\T} v \mid \tilde{Z}) 
          - f_*(v, x) \leq 0.
     $$
     To evaluate the support function, we substitute the input vector 
     $$y = \tilde{B}^\T v = \mat{O \\A^0} v= \mat{0 \\ A^0 v}.$$
     Substituting the $y$ into our block matrix equality constraint in~\eqref{delta^*_Z} yields 
     $$
     \mat{A^{\T} s^1 - u  \\ w + V^{\T} s^2 } = \mat{0 \\ A^0 v}.
     $$
     {Note that the first term in~\eqref{delta^*_Z} vanishes from the above equation.} By removing the minimum in~\eqref{delta^*_Z} due to the {\it general principle} in~\cite{Ben15}, we obtain the constraints
     \begin{align}
          (a^0)^{\T} v
          - \sum_{k = 1}^m
               \left\{ u_k \ln \left(1 - \frac{w_{k}}{u_k} \right) 
               + w_{k} \right\}
          + \tau \| s^1 \|_*
          + &\delta \| s^2 \|_*
          - f_*(v, x) \leq 0 ,
               \label{robustconst2_dual1} \\
          \mat{ - u + A^{\T} s^1 \\ w + V^{\T}s^2 } &= \mat{O \\ A^0}v, 
               \notag \\
          u  & \in \R^m_{++}, \\
          u - w & \in \R^m_{++},
               \notag
     \end{align}
     which are equivalent to 
     (\ref{robustconst_dual}).
\end{proof}
\begin{remark}
     If the function $f(a,\cdot)$ is convex for all $a \in \mathbb{R}^m$, 
     then the expanded dual expression~\eqref{Ben_dual_arg} 
     obtained in Corollary~\ref{cor:Ben_dual_arg} is also a convex constraint with respect to $(v,x)$.
     Thus, the dual expressions~\eqref{robustconst2_dual1} are also 
     convex constraints.
\end{remark}

%
%
\begin{example}[The case of linear constraint]
     Consider the case $f(a, x) = a^{\T} x - b$, with $b \in \R$.
     In this case $m = n$, and $f$ is a concave function for the decision variable $x$, 
     and a convex function for the parameter $a$.
     The concave conjugate function of $f$ is given as
     \begin{equation*}
          f_*(v, x) = 
          \begin{cases}
               b, & v = x, \\
               - \infty, & \mbox{otherwise}.
          \end{cases}
     \end{equation*}
     From Theorem \ref{thm_dual} we have the dual expression 
     \begin{align*}
          (a^0)^{\T} x - \sum_{k = 1}^n 
          \left\{ u_k \ln \left(1 - \frac{w_k}{u_k} \right) + w_k \right\}
          + \tau \| s^1 \|_*
          + &\delta \| s^2 \|_*
          \leq b, \\
          A^{\T} s^1 &= u, \\
          w + V^{\T}s^2 &= A^0 x, \\
          u & \in \R^m_{++}, \\
          u - w& \in  \R^m_{++},
     \end{align*}
     where $u, w, s^1, s^2 \in \Rn$.
     Moreover, if $V = O$ then these constraints are simplified as
     \begin{align*}
          - \sum_{k = 1}^n 
          u_k \ln \left(1 - \frac{a^0_k x_k}{u_k} \right)
          &+ \tau \| s^1 \|_*
          \leq b, \\
          A^{\T} s^1 &= u, \\
          u & \in \R^m_{++}, \\
          u - A^0 x& \in \R^m_{++}.
     \end{align*}
\end{example}
%
%

In the above, we have derived a tractable dual expression of a constraint with uncertain parameters.
Now we {extend} the result to a problem with uncertain parameters in the objective function:
\begin{equation}
    \label{prob:RC_P_obj}
    \tag{$\text{RC}_{\text{P obj}}$}
    \begin{aligned}
      &{\displaystyle \max_x} \quad  \min_{a \in \Omega} \ f_0(a, x) \\
      & \ {\rm s.t.} \hspace{5mm} x \in S,
    \end{aligned}
\end{equation}
where $S \subseteq \Rn$ is the feasible set of the nominal problem.

\begin{cor}
     Suppose that the function $f_0 \colon \R^m \times \R^n \to \R$
     is such that $f_0(\cdot, x)$ is convex for all $x$,
     and assume that the nominal value $a^0$ satisfies
     $a^0 \in \text{ri \ dom}(f_0(\cdot, x))$
     for all $x$.
     Then, \eqref{prob:RC_P_obj} is equivalent to the following problem:
     \begin{equation}
          \label{prob:dual_RC_P_obj}
          \tag{$\text{DRC}_{\text{P obj}}$}
          \begin{aligned}
               & \hspace{-3mm} {\displaystyle \max_{x, u, v, w, s^1, s^2}} \quad- (a^0)^{\T} v  
               + \sum_{k = 1}^n \left\{  u_k \ln \left(1 - \frac{w_{k}}{u_k} \right) + w_{k} \right\} 
               -
               \tau \| s^1 \|_*
               -
               \delta \| s^2 \|_*
               - f_0^*(- v, x) \\
               & \ {\rm s.t.} \hspace{9mm} x \in S, \\
               & \hspace{14mm} A^{\T} s^1 = u, \quad w + V^{\T}s^2 = A^0 v, \\
               & \hspace{14mm} {u \in \R^m _{++}}, \quad u - w  \in {\R^m _{++}.}
          \end{aligned}
     \end{equation}
\end{cor}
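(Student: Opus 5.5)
The plan is to reduce the max--min problem to Theorem~\ref{thm_dual} through an epigraph variable, and then eliminate that variable.

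\textbf{Step 1 (epigraph form).} Introduce a scalar $t$ and rewrite \eqref{prob:RC_P_obj} as
\[
\max_{x,t} \ t \quad \text{s.t.} \quad x \in S, \qquad \max_{a \in \Omega} \bigl( t - f_0(a,x) \bigr) \le 0 .
\]
This is equivalent because, for fixed $x$, the inner quantity $\min_{a\in\Omega} f_0(a,x)$ is exactly the largest $t$ that is feasible.

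\textbf{Step 2 (apply Theorem~\ref{thm_dual}).} Set $f(a,(x,t)) \coloneqq t - f_0(a,x)$. This function is concave in $a$ because $f_0(\cdot,x)$ is convex. Its domain in $a$ equals $\text{dom}(f_0(\cdot,x))$, so the assumption $a^0 \in \text{ri dom}$ carries over unchanged. Theorem~\ref{thm_dual} then says the robust constraint holds if and only if there exist $u,v,w,s^1,s^2$ satisfying the linear and positivity conditions $A^\T s^1 = u$, $w + V^\T s^2 = A^0 v$, $u \in \R^m_{++}$, $u-w \in \R^m_{++}$, together with
\[
(a^0)^\T v - \sum_k \bigl\{ u_k \ln(1 - w_k/u_k) + w_k \bigr\} + \tau\|s^1\|_* + \delta\|s^2\|_* - f_*(v,(x,t)) \le 0 .
\]

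\textbf{Step 3 (compute the conjugate).} Next compute the concave partial conjugate:
\[
f_*(v,(x,t)) = \inf_a \bigl\{ a^\T v - t + f_0(a,x) \bigr\} = -t - \sup_a \bigl\{ (-v)^\T a - f_0(a,x) \bigr\} = -t - f_0^*(-v,x),
\]
where $f_0^*$ is the convex partial conjugate in the first argument. Substituting this into the inequality of Step 2 gives
\[
t \le -(a^0)^\T v + \sum_k \bigl\{ u_k \ln(1 - w_k/u_k) + w_k \bigr\} - \tau\|s^1\|_* - \delta\|s^2\|_* - f_0^*(-v,x).
\]

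\textbf{Step 4 (eliminate $t$).} Maximizing $t$ jointly over $(x,t,u,v,w,s^1,s^2)$ pushes $t$ up to the right-hand side above. Hence the optimal values and the $x$-parts of the solutions coincide with those of \eqref{prob:dual_RC_P_obj}. (The summation index $n$ in the statement should read $m$ unless $m=n$.)

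\textbf{Main obstacle.} The most delicate point is the bookkeeping of the equivalence. For each fixed $x$, the supremum over the dual variables has to equal $\min_{a\in\Omega} f_0(a,x)$, not merely bound it. This follows from Theorem~\ref{thm_dual} being an \emph{if and only if} for every value of $t$: the set of feasible $t$ is the same on both sides, so the two suprema agree. One caveat is that the dual maximum may be a supremum that is not attained, because of the strict positivity constraints; in that case "equivalent" should be read in the sense of equal optimal values.

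A second check is that $f_0^*(-v,x)$ is taken as a convex conjugate with the correct sign. The convexity of $f_0(\cdot,x)$ is what licenses both this conjugate and the application of Theorem~\ref{thm_dual} to $t - f_0$.
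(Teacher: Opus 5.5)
Your argument is correct, but it follows a different route from the paper. The paper never passes through the constraint form. For fixed $x$ it writes $\min_{a\in\Omega} f_0(a,x)=\min_a\{f_0(a,x)+\delta(a\mid\Omega)\}$ and applies Fenchel duality directly to get $\max_v\{-\delta^*(v\mid\Omega)-f_0^*(-v,x)\}$. It justifies strong duality by $a^0\in\text{ri dom}(f_0(\cdot,x))$ and $a^0\in\text{ri}\,\Omega$, splits $\delta^*(v\mid\Omega)=(a^0)^\T v+\delta^*(\tilde{B}^\T v\mid Z)$, and then inserts Lemmas~\ref{lemma_delta^*_1}--\ref{lemma_delta^*_3}.

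You instead use the epigraph reformulation, which the paper itself records as a remark in Section~\ref{sect:prop}, and call Theorem~\ref{thm_dual} as a black box on $t-f_0(a,x)$. The only new computation is $f_*(v,(x,t))=-t-f_0^*(-v,x)$, which you get right. Every technical hypothesis (the shift to $\zeta^0\in\text{ri}\,Z^0$, $\tau>0$, compactness) is inherited rather than rechecked. The paper's route gives the pointwise identity between $\min_{a\in\Omega}f_0(a,x)$ and the dual value directly for each $x$; yours recovers it from the ``if and only if for every $t$'' argument. Your remark that $\sum_{k=1}^n$ should read $\sum_{k=1}^m$ is also correct.

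Your attainment caveat is a real issue, not just caution. Take $f_0(a,x)=a^\T x$ and $x=0$:
\begin{itemize}
\item Finiteness of $f_0^*(-v,0)$ forces $v=0$, so $w=-V^\T s^2$.
\item The dual objective becomes $\sum_k u_k\{\ln(1-w_k/u_k)+w_k/u_k\}-\tau\|s^1\|_*-\delta\|s^2\|_*$, and each summand is $\le 0$.
\item $A^\T s^1=u\neq 0$ forces $s^1\neq 0$, so for $\tau>0$ the objective is strictly negative, while its supremum is $0=\min_{a\in\Omega}a^\T 0$.
\end{itemize}
So the ``there exist'' in Theorem~\ref{thm_dual} can fail at the endpoint $t=\min_{a\in\Omega}f_0(a,x)$. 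Your sentence ``the set of feasible $t$ is the same on both sides'' is therefore true only up to that endpoint. The conclusion you actually draw, equality of optimal values with the maximum read as a supremum, is unaffected. The paper's proof writes $\max$ throughout and has the same imprecision.
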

\begin{proof}
     In a similar way to the proof of~\cite[Theorem~2]{Ben15},
     we have
     \begin{align*}
          \min_{a \in \Omega} \ f_0(a, x) 
          &= \min_{a \in \R^m} \left\{ 
               f_0(a, x) + \delta(a \ | \ \Omega) \right\} \\
          &= \max_{v \in \R^m} \left\{
                - \delta^*( - v \ | \ \Omega) - f_0^*(v, x) \right\} \\
          &= \max_{v \in \R^m} \left\{
                - \delta^*( v \ | \ \Omega) - f_0^*(- v, x) \right\} \\
          &= \max_{v \in \R^m} \left\{
                - (a^0)^{\T} v - \delta^*(\tilde{B}^{\T} v \ |\  Z) - f_0^*(- v, x) \right\}, 
     \end{align*}
     where the second line follows from Fenchel duality.
     Note that strong duality holds 
     since $a^0 \in \text{ri dom}(f_0(\cdot, x))$ {for all} $x$ and 
     $a^0 \in \text{ri} \ \Omega$.
     Thus the equivalent problem to~\eqref{prob:RC_P_obj} is written as
     \begin{equation*}
          \begin{aligned}
               &{\displaystyle \max_{x, v}} \quad - (a^0)^{\T} v - \delta^*(\tilde{B}^{\T} v \ | \ Z) - f_0^*(- v, x) \\
               & \ {\rm s.t.} \hspace{5mm} x \in S.
          \end{aligned}
     \end{equation*}
     Then applying Lemmas \ref{lemma_delta^*_1}--\ref{lemma_delta^*_3}, 
     this is equivalent to~\eqref{prob:dual_RC_P_obj}.
     \end{proof}
\begin{example}[The case of linear objective function]
     If $f_0(a, x) = a^{\T} x$, then the dual problem associated with~\eqref{prob:RC_P_obj} can be written as follows:
     \begin{equation*}
          \begin{aligned}
               & \hspace{-3mm} {\displaystyle \max_{x, u, v, w, s^1, s^2}} \quad (a^0)^{\T} x
               + \sum_{k = 1}^n \left\{  u_k \ln \left(1 - \frac{w_{k}}{u_k} \right) + w_{k} \right\} 
               -
               \tau \| s^1 \|_*
               -
               \delta \| s^2 \|_* \\
               & \ {\rm s.t.} \hspace{9mm} x \in S, \\
               & \hspace{14mm} A^{\T} s^1 = u, \quad w + V^{\T}s^2 = - A^0 x, \\
               & \hspace{14mm} {u \in \R^m _{++}}, \quad u - w  \in {\R^m _{++}}.
          \end{aligned}
     \end{equation*}
\end{example}
%


\section{Properties of the robust counterpart with the proposed model} \label{sect:prop}
In this section, we derive some properties of the robust counterpart 
obtained by using the proposed uncertainty set \eqref{eq:omega}. 
{Those include}, for example, 
the boundedness and the asymptotic {behavior} of 
the worst-case value of the uncertain parameter $a$ 
and the risk-adjusted objective function value{, specifically,} 
 how their values change as the uncertainty-level parameter $\tau$ increases   
or {approaches} infinity.

\subsection{The case of uncertainties in the constraints}
We consider the following problem 
and assume that {Assumption~\ref{assum_cons}} holds:
\begin{equation}
     \label{prob:P_a_cons}
     \tag{$\text{P}_{a\text{ cons}}$}
     \begin{aligned}
          &{\displaystyle \min_x} \quad f_0(x) \\
          & \ {\rm s.t.} \hspace{3mm} 
               f(a, x) \geq 0 {,}  \\
          & \hspace{9mm} 
               x \in S.
     \end{aligned}
\end{equation}
\begin{assum} \label{assum_cons}
     We assume that the problem \eqref{prob:P_a_cons} satisfies the following conditions:
     \begin{enumerate}
     \item The set $S$ is {convex}.
     \item The uncertain parameter $a$ {takes} only positive values, 
           {i.e.}, $a \in \R {^m} _{++}$.
     \item The function $f \colon \R^m \times \Rn \rightarrow \R$ is convex with respect to~$a$ and concave with respect to~$x$.
     \item The function $f$ is differentiable in $a$ over $\R^m_{++}$.
     \end{enumerate}
\end{assum}
The robust counterpart of the problem \eqref{prob:P_a_cons} 
is given {by}
\begin{equation*}
     \label{prob:RC_P_cons}
     \tag{$\text{RC}_{\text{P cons}}$}
     \begin{aligned}
          &{\displaystyle \min_x} \quad f_0( x) \\
          & \ {\rm s.t.} \hspace{3mm} 
               \min_{a \in \Omega(a^0, A, \tau)} f(a, x) \geq 0 {,}  \\
          & \hspace{9mm} 
               x \in S.
     \end{aligned}
\end{equation*}
Throughout this subsection,
we {vary} only the value of $\tau$ 
{while} fixing in advance the parameters $a^0$ and $A$ 
{in} the set $\Omega(a^0, A, \tau)$.
Hence, for simplicity, we denote $\Omega(a^0, A, \tau)$ by~$\Omega(\tau)$.

\begin{remark}
    In Sections \ref{sect:pre} and \ref{sect:log}, we discussed the standard robust constraint of the form
$\max_{a\in U} f(a,x)\le 0$, which is convenient for presenting the Fenchel-duality-based
reformulation. In this section, however, we consider the equivalent form
\[
\min_{a\in\Omega(\tau)} f(a,x)\ge 0,
\]
because the effect of positivity preservation is more naturally interpreted through the
worst-case scenario when $f(\cdot,x)$ is increasing. These two formulations are equivalent
up to a sign change, but the latter is more suitable for the analysis in this section.
\end{remark} 

First, we define the estimated value of the uncertain parameter~$a$ 
in the worst case.  
%
%
\begin{dfn}
     In the robust optimization problem~\eqref{prob:P_a_cons}, 
     we call the vector 
     \[
          a^*(\tau; x) \coloneqq \argmin_{a \in \Omega(\tau)} f(a, x)     
     \]
     the worst-case scenario of the parameter $a$ at $x$.
     In addition, we call the worst-case scenario of $a$ 
     at the robust optimal solution $x^*(\tau)$ of \eqref{prob:RC_P_cons}
     \[
          a^{**} (\tau) \coloneqq a^*(\tau; x^*(\tau)) = 
          \argmin_{a \in \Omega(\tau)} f(a, x^*(\tau))      
     \]
     the robust scenario of $a$.
\end{dfn}
\noindent Note that the robust constraint in~\eqref{prob:P_a_cons} 
can be written via the worst-case scenario~as
\begin{equation*}
     f(a^*(\tau; x), x) \geq 0, 
\end{equation*}
which means that the worst-case scenario gives the risk-adjusted constraint.

The following {lemma} is {obtained by} 
evaluating the worst-case scenario for {each} value of~$\tau$.
It characterizes the worst-case scenario defined above.
\begin{lemma} \label{lemma_z^*}
     Assume that the robust optimization problem~\eqref{prob:P_a_cons} satisfies
     Assumption~\ref{assum_cons} 
     and {that} the function {$f(\cdot, x)$} is either increasing or decreasing.
     In addition, we define the set 
     $\Omega'(a^0, \tau, A) \subset \R^m \times \R^m \times \R^m$ as 
     \begin{equation*}
          \Omega'(a^0, \tau, A) \coloneqq 
               \left\{ 
                    (a, z, y) \in \R^m \times \R^m \times \R^m \bigmid
                    a = A^0 z, \ 
                    z_i - \ln z_i - 1 \leq y_i, \
                    \| A y \| \leq \tau
               \right\},
     \end{equation*}
     and for $x \in S${,} let {$\mathcal{I}(x)$} be the set of indices defined by
     $$\mathcal{I} (x) \coloneqq \left\{ i \bigmid 
                         \frac{\partial}{\partial a_i}f(a,x) 
                         \neq 0, \ 
                         \forall a \in \R^m_{++} \right\}.$$
     Then,
     the worst-case scenario $a^*(\tau; x)$ at the point $x$ 
     and the vectors $z^*, y^* \in \R^m$ 
     such that $(a^*(\tau; x), z^*, y^*) \in \Omega'(a^0, \tau, A)$
     satisfies the following statements for $i \in \mathcal{I}(x)$:
     \begin{align}
          \begin{aligned} \label{eqs_lemma_wc_s_incr}
               a^*(\tau; x) &= A^0 z^*, \\
               g(z^*_i) &= y^*_i, \quad i \in \mathcal{I}(x), \\
               \| A y^* \| &= \tau. 
          \end{aligned}
     \end{align}
     Moreover, the following statements also hold:
     \begin{itemize}
          \item If {$f(\cdot, x)$} is increasing, 
               then $0 < z^*_i < 1$ 
               for $i \in \mathcal{I}(x)$;
          \item If {$f(\cdot, x)$} is decreasing, 
               then $z^*_i > 1$ 
               for $i \in \mathcal{I}(x)$.
     \end{itemize}
\end{lemma}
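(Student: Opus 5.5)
The plan is to read the worst-case problem $\min_{a\in\Omega(\tau)} f(a,x)$ as a minimization over the lifted set $\Omega'(a^0,\tau,A)$, namely minimizing $(z,y)\mapsto f(A^0z,x)$ subject to $g(z_i)\le y_i$ and $\|Ay\|\le\tau$. Each claimed equality is then proved by contradiction: if it fails at a minimizer $(a^*(\tau;x),z^*,y^*)$, I will exhibit a feasible perturbation that strictly decreases $f$. I will take the existence of the minimizer for granted, as the definition of $a^*(\tau;x)$ does.

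I treat the increasing case first; the decreasing case is symmetric, with the roles of $(0,1)$ and $(1,\infty)$ exchanged. The preliminary step is the sign of the derivative. For $i\in\mathcal I(x)$ we have $\partial f/\partial a_i\ge 0$ and $\partial f/\partial a_i\neq0$ everywhere on $\R^m_{++}$, so $\partial f/\partial a_i>0$ there. Since $a_i=a^0_iz_i$ with $a^0_i>0$, it follows from the mean value theorem that $f(A^0z,x)$ is strictly increasing in each coordinate $z_i$, $i\in\mathcal I(x)$, along segments in $\R^m_{++}$. This strict monotonicity is the only way the hypothesis enters, and it is why every statement is restricted to $i\in\mathcal I(x)$.

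The three claims then follow in order.

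\emph{(a) $z^*_i<1$.} Suppose $z^*_i\ge1$ for some $i\in\mathcal I(x)$. Replace $z^*_i$ by some $z_i\in(0,1)$ close to $1$, chosen so that $g(z_i)\le g(z^*_i)\le y^*_i$; this is possible because $g(1)=0\le g(z_i^*)$ and $g$ is continuous. The point remains feasible, and $f$ strictly decreases because $z_i<z^*_i$. This contradicts minimality.

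\emph{(b) $g(z^*_i)=y^*_i$.} Suppose $g(z^*_i)<y^*_i$. Since $0<z^*_i<1$ and $g$ is continuous and decreasing on $(0,1)$, a slight decrease of $z_i$ keeps $g(z_i)\le y^*_i$. Again $f$ strictly decreases, a contradiction.

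\emph{(c) $\|Ay^*\|=\tau$.} Suppose $\|Ay^*\|<\tau$ and pick any $i\in\mathcal I(x)$. By continuity of the norm, $\|A(y^*+te_i)\|\le\tau$ for small $t>0$. Since $g$ maps $(0,z^*_i]$ onto $[y^*_i,\infty)$ decreasingly (using $\lim_{t\downarrow0}g=\infty$), there is $z_i<z^*_i$ with $g(z_i)=y^*_i+t$. The resulting point lies in $\Omega'$ and has strictly smaller $f$, a contradiction.

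In the decreasing case the same arguments apply with perturbations pushing $z_i$ upward. This uses that $g$ is increasing on $(1,\infty)$ and tends to $\infty$ there, and it yields $z^*_i>1$.

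The main obstacle I expect is step (c), because it requires $\mathcal I(x)\neq\emptyset$. If $\mathcal I(x)$ is empty, $\|Ay^*\|=\tau$ need not hold at an arbitrary minimizer. The argmin is then not unique, and I would state that the claim is understood for a suitably chosen $y^*$ by enlarging $y$. Concretely, when $A$ is nonsingular, $y$ may be scaled up componentwise along $\{y\ge \tilde g(z^*)\}$ until the norm constraint becomes active, and this is compatible with $g(z_i^*)\le y_i$. A related subtlety is that the claims should be phrased for "the vectors $z^*,y^*$" as some choice attaining the minimum. For components $j\notin\mathcal I(x)$, $y^*_j$ may be adjusted freely, and I would note this explicitly.
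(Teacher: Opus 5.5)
Your perturbation argument has a gap in step (a) at the point $z^*_i=1$. There $g(z^*_i)=0$, and since $g>0$ on $(0,1)$, no $z_i<1$ satisfies $g(z_i)\le g(z^*_i)$. Your justification ``$g(1)=0\le g(z^*_i)$ and $g$ is continuous'' works only when $g(z^*_i)>0$, i.e.\ when $z^*_i>1$. Steps (b) and (c) actually need only $z^*_i\le 1$ (and (b) only continuity of $g$), so together your arguments reduce the missing case to $z^*_i=1$, $y^*_i=0$, $\|Ay^*\|=\tau$. In that configuration no move of $z_i$ alone is feasible. Raising $y_i$ forces you to lower other components of $y$, which pushes other $z_j$ toward $1$ and in general increases $f$. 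This is not a technicality: it is exactly where $\tau>0$ must be used, and your proposal never uses it. For $\tau=0$ and nonsingular $A$ the only feasible point is $(a^0,e,0)$, so $z^*_i=1$ and the strict inequality $z^*_i<1$ is false. An argument that never invokes $\tau>0$ therefore cannot be complete.

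The missing idea is a quantitative comparison that exploits $g'(1)=0$. The point $(e,0)$ is feasible and the lifted set is convex, so $(z^s,y^s):=(1-s)(z^*,y^*)+s(e,0)$ is feasible with $\|Ay^s\|\le(1-s)\tau$. By local Lipschitz continuity of $f(\cdot,x)$, this step raises $f$ by at most $O(s)$. The freed slack lets you set $y_i=cs$ with $c=\tau/\|Ae_i\|$ (any $c>0$ if $Ae_i=0$). Since $g(1-h)=\sum_{k\ge 2}h^k/k\le h^2$ for $0<h\le 1/2$, you can then move $z_i$ from $1$ to $1-h$ with $g(1-h)=cs$ and $h\ge\sqrt{cs}$. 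This decreases $f$ by order $\sqrt{s}$, because $\partial f/\partial a_i$ is continuous and positive, hence bounded away from $0$ near $a^*$. For small $s$ the net change is negative, which is the desired contradiction.

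The paper avoids this issue by working with KKT multipliers. For $\tau>0$ Slater's condition holds, and the stationarity relation $a^0_i\,\partial f(a^*,x)/\partial a_i+\lambda_i(1-1/z^*_i)=0$ rules out $z^*_i=1$ at once and forces $\lambda_i>0$. Complementary slackness then gives $g(z^*_i)=y^*_i$ and $\|Ay^*\|=\tau$. With the case $z^*_i=1$ patched and $\tau>0$ made explicit, your primal route is a valid and more elementary alternative. Your remark that (c) needs $\mathcal{I}(x)\neq\emptyset$ applies equally to the paper's step showing $\lambda_{m+1}\neq 0$.
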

\begin{proof}
     It suffices to consider the case
     where $f(\cdot, x)$ is increasing.
     The minimization problem
     $\min_{{a \in \Omega(\tau)}} f(a, x)$ {can be} written as 
     \begin{equation*}
          \begin{aligned}
               & \min_{a, z, y} f(a, x) \\
               & \ {\rm s.t.} \hspace{3mm} 
                    a - A^0 z = 0, \\
               & \hspace{9mm} 
                    z_i - \ln z_i - 1 \leq y_i, \quad i = 1, \dots, m, \\
               & \hspace{9mm} 
               \| A y \| \leq \tau, 
          \end{aligned}
     \end{equation*}
     and we obtain its KKT {conditions} as follows:
     \begin{gather}
          \nabla_a f(a^*, x) 
               + \theta  = 0, \label{KKT_a} \\
          -\theta_i a^0_i 
               + \lambda_i \left(1 - \frac{1}{z^*_i}\right) = 0, \quad 
               i = 1, \dots, m,
               \label{KKT_z} \\        
          - \lambda 
               + \lambda_{m+1}  \eta^* = 0
               \label{KKT_y} \\
          z^*_i - \ln z^*_i -1 \leq y^*_i, \quad i = 1, \dots, m, \quad 
               \| A y^* \| \leq \tau, \label{KKT_lag} \\
          \lambda_i ( z^*_i - \ln z^*_i -1 - y^*_i) = 0, \quad i = 1, \dots, m,
               \label{KKT_com_m} \\
          \lambda_{m+1} ( \| Ay^* \| - \tau ) = 0, \label{KKT_com_m+1} \\
          \lambda_i \geq 0,  \quad i = 1, \dots, m + 1 \notag,
     \end{gather}
     where 
     $\eta^* \in \partial \| A y^* \| $.
     By multiplying $A^0$ from the left to (\ref{KKT_a}),
     we get
     \[
          A^0 \nabla_a f(a^*, x) 
          + \big[\theta_i a^0_i \big]_{i=1}^m = 0.
     \]
     Substituting~\eqref{KKT_z} into the above equation, {we obtain}
     \begin{equation}
          a^0_i \frac{\partial}{\partial a_i} f(a^*, x) 
          + \lambda_i \left(1 - \frac{1}{z^*_i}\right) = 0, \quad 
          i = 1, \dots, m.
          \label{KKT_a_z}
     \end{equation}
     Now, note that $\lambda_i > 0$ when $i \in \mathcal{I}(x)$ since $a_0 \in \R^m_{++}$ and $\frac{\partial}{\partial a_i} f(a^*, x) > 0$ in this case. Then, the above equality yields
     \begin{equation} \label{z^*_i < 1}
          z^*_i = 
          \frac{\lambda_i}
          {\lambda_i + a^0_i \frac{\partial}{\partial a_i} f(a^*, x)}, \quad
          i \in \mathcal{I}(x).
     \end{equation}
     Therefore we see from (\ref{z^*_i < 1}) that $z^*_i < 1$
     for $i \in \mathcal{I}(x)$.
     {Next} we see that $\lambda_{m + 1} \neq 0$.
     Suppose{,} for {the sake of} contradiction{,} that $\lambda_{m + 1} = 0$.
     Then from (\ref{KKT_y}) we have 
     $\lambda_i = 0, \ i = 1, \dots, m$,
     and then it follows from \eqref{KKT_z} and \eqref{KKT_y} that 
     $\frac{\partial}{\partial a_i} f(a^*, x) = 0$,
     which contradicts the assumption that $i \in \mathcal{I}(x)$.
     Thus we obtain $\lambda_{m + 1} \neq 0$ and then 
     $\| A y^* \| = \tau$ from (\ref{KKT_com_m+1}).
     Since $\lambda_{m + 1} \neq 0$, 
     if $\frac{\partial}{\partial a_i} f(a^*, x) \neq 0$
     then from (\ref{KKT_a_z}), we have 
     $\lambda_i \neq 0$.
     So we obtain from (\ref{KKT_com_m}) {that}
     \[
          z^*_i - \ln z^*_i -1 = y^*_i, \quad
          i \in \mathcal{I}(x).
     \]
     In the case where $f(\cdot, x)$ is decreasing,
     we have $\frac{\partial}{\partial a_i} f(a^*, x) < 0$ 
     for $i \in \mathcal{I}(x)$.
     Therefore, the statement can be verified similarly.
\end{proof}
This lemma {implies} that if the constraint function {$f(\cdot, x)$} is {either} increasing or decreasing, 
then the worst-case scenario is located at the {boundary} of the uncertainty set \eqref{eq:omega}, 
as {illustrated in} Figure \ref{fig:wc_scenario}.

\begin{figure}[ht] 
     \centering
     \includegraphics[width=5cm]{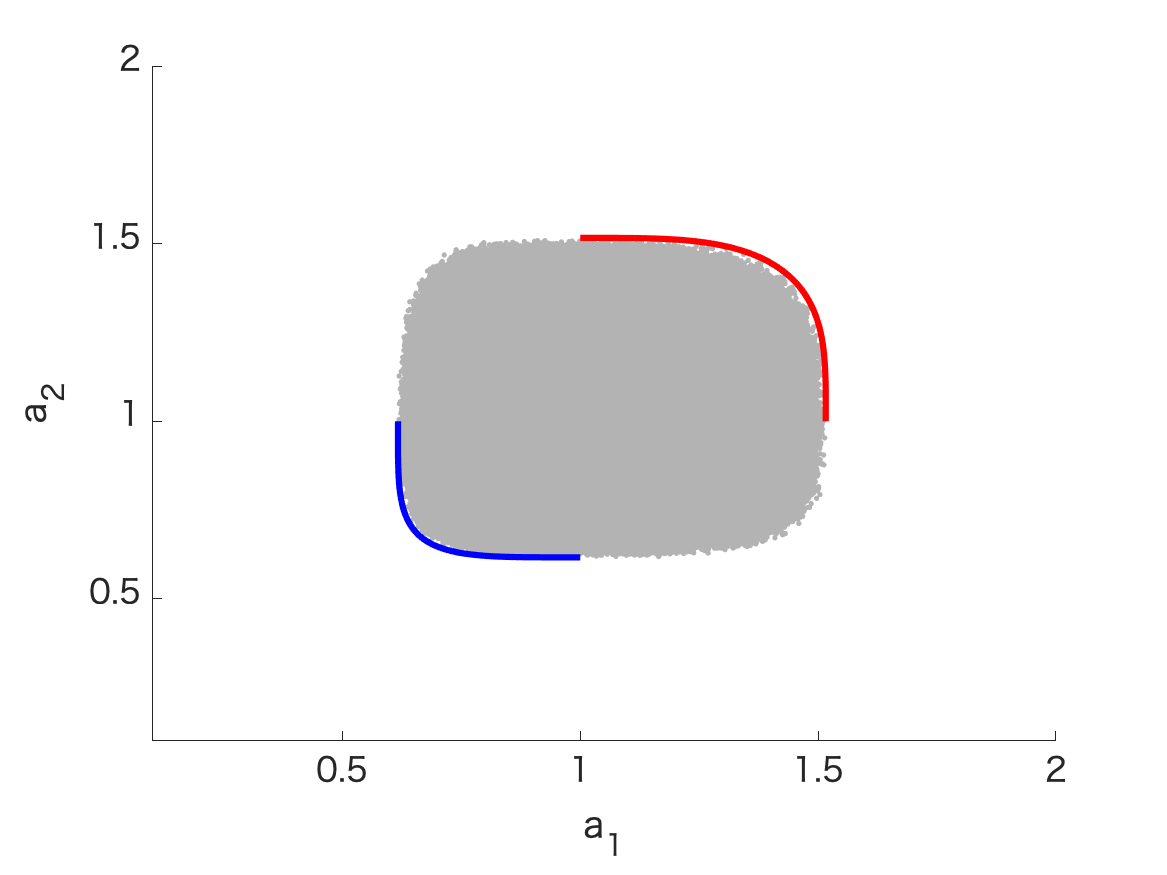}
     \caption{Description of Lemma~\ref{lemma_z^*}. 
     The blue and red lines denote
     the cases where $f$ is increasing and decreasing, 
     respectively.}
     \label{fig:wc_scenario}
\end{figure}

%
%
\begin{thm} \label{thm:a_bound}
     Assume that the robust optimization problem~\eqref{prob:P_a_cons} satisfies
     Assumption~\ref{assum_cons} 
     and the function {$f(\cdot, x)$} is either increasing or decreasing.
     In addition, we define the functions $g_+$, and $g_-$ as 
     \begin{align*}
          g_-(t) &\coloneqq t - \ln t - 1, \quad 0 < t \leq 1, \\
          g_+(t) &\coloneqq t - \ln t - 1, \quad 1 \leq t < \infty.
     \end{align*}
     Then,
     the worst-case scenario $a^*(\tau; x)$ at the point $x$ satisfies 
     the following statements:
     \begin{itemize}
          \item If {$f(\cdot, x)$} is increasing, 
               then for $x \in S$ that satisfies $\nabla_a f(a,x)\neq 0$
               we have
               \begin{equation} \label{ineq:th_a^*_bound_incr}
                    g_-^{-1}(\tau \|A^{-1}\|) \ a^0_i \leq a^*_i(\tau; x) \leq a^0_i, 
                    \quad i \in \mathcal{I}(x).   
               \end{equation} 
          \item If {$f(\cdot, x)$} is decreasing, 
               then for $x \in S$ that satisfies $\nabla_a f(a,x)\neq 0$
               we have
               \begin{equation} \label{ineq:th_a^*_bound_decr}
                    a^0_i \leq a^*_i(\tau; x) \leq g_+^{-1}(\tau \|A^{-1}\|) \ a^0_i,
                    \quad i \in \mathcal{I}(x).   
               \end{equation}
     \end{itemize}
\end{thm}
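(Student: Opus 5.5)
The plan is to combine Lemma~\ref{lemma_z^*} with a norm bound on $y^*$ that comes from $\|Ay^*\| = \tau$. I treat the increasing case in detail; the decreasing case is symmetric, with $g_+$ in place of $g_-$.

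First, fix $x \in S$ with $f(\cdot,x)$ increasing and let $(a^*(\tau;x), z^*, y^*) \in \Omega'(a^0,\tau,A)$ be the worst-case triple. Lemma~\ref{lemma_z^*} gives, for $i \in \mathcal{I}(x)$,
\[
a^*_i = a^0_i z^*_i, \qquad 0<z^*_i<1, \qquad g(z^*_i)=y^*_i .
\]
The upper bound $a^*_i(\tau;x) \le a^0_i$ follows at once from $z^*_i<1$ and $a^0_i>0$.

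For the lower bound I need an upper bound on $y^*_i$. Since $A$ is non-singular (as in Proposition~\ref{prop:property_basic}), write $y^* = A^{-1}(Ay^*)$. Then
\[
|y^*_i| \le \|y^*\| \le \|A^{-1}\|\,\|Ay^*\| \le \tau\|A^{-1}\| .
\]
Here $\|A^{-1}\|$ is taken as the operator norm induced by the chosen vector norm, and I use that a coordinate is bounded by the norm. That holds for $\ell_p$ norms; for a general norm one may need a constant, and I will state it under this convention. Because $z^*_i \in (0,1)$ and $g(z^*_i)=y^*_i$, this gives
\[
g_-(z^*_i) \le \tau\|A^{-1}\| .
\]
Now $g_-$ is continuous and strictly decreasing on $(0,1]$, with $g_-(1)=0$ and $g_-(t)\to\infty$ as $t\downarrow 0$. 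So $g_-^{-1}$ is a well-defined decreasing bijection from $[0,\infty)$ onto $(0,1]$. Applying this decreasing inverse reverses the inequality:
\[
z^*_i \ge g_-^{-1}(\tau\|A^{-1}\|) .
\]
Multiplying by $a^0_i>0$ yields \eqref{ineq:th_a^*_bound_incr}.

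In the decreasing case, Lemma~\ref{lemma_z^*} gives $z^*_i>1$, hence $a^*_i \ge a^0_i$. The same bound $g_+(z^*_i)\le\tau\|A^{-1}\|$ holds, and $g_+$ is an increasing bijection from $[1,\infty)$ onto $[0,\infty)$. This gives $z^*_i \le g_+^{-1}(\tau\|A^{-1}\|)$, which is \eqref{ineq:th_a^*_bound_decr}.

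The main obstacle is the coordinate-versus-norm step $|y^*_i| \le \tau\|A^{-1}\|$. I will make the norm convention explicit: an operator norm, with coordinates bounded by the vector norm. I also need the standing nonsingularity of $A$ and the existence of the minimizer, which holds by compactness of $\Omega(\tau)$.
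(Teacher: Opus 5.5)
Your proof is correct and follows the paper's argument step for step. Lemma~\ref{lemma_z^*} gives $z^*_i\in(0,1)$ (resp.\ $z^*_i>1$) with $g(z^*_i)=y^*_i$. Non-singularity of $A$ gives $y^*_i\le\|A^{-1}\|\,\|Ay^*\|\le\tau\|A^{-1}\|$, and inverting the monotone branch $g_-$ (resp.\ $g_+$) yields the bounds. Two of your points sharpen the paper: you fix a norm convention (an induced operator norm, with coordinates dominated by the vector norm) that the paper uses without saying so, and you note that feasibility $\|Ay^*\|\le\tau$ suffices, so the equality $\|Ay^*\|=\tau$ is not needed.
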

\begin{proof}
     Let us consider the increasing case. 
     By Lemma \ref{lemma_z^*}, $a^*(\tau; x)$, $z^*$, and $y^*$ 
     satisfy~\eqref{eqs_lemma_wc_s_incr}.
     Then we have
     \[ 
          \| y^* \| \leq \| A^{-1} \| \| A y^* \| 
          = \tau \| A^{-1} \|, 
     \]
     which implies 
     \begin{equation} \label{lemma_proof_y_bound}
          y^*_i \leq \tau \| A^{-1} \|, 
          \quad i = 1, \dots, m.
     \end{equation}
     From \eqref{eqs_lemma_wc_s_incr} 
     it also follows that
     \begin{equation} \label{lemma_proof_z=g^-1(y)}
          z^*_i = g_-^{-1}(y^*_i), \quad i \in \mathcal{I}(x).
     \end{equation}
     Note that $g_-$ is a {decreasing} function, and thus 
     from (\ref{lemma_proof_y_bound}) and (\ref{lemma_proof_z=g^-1(y)}) we obtain
     \begin{equation*}
          g_-^{-1}(\tau \| A^{-1} \|) \leq z^*_i \leq 1, \quad
          i \in \mathcal{I}(x), 
     \end{equation*}
     which is equivalent to (\ref{ineq:th_a^*_bound_incr}).
     In the case {where} $f(\cdot, x)$ is decreasing, we can  verify
     the statement {in a similar way}.
     By Lemma \ref{lemma_z^*} we have (\ref{lemma_proof_y_bound}) and
     $z^*_i = g_+^{-1}(y^*_i)$ for $i \in \mathcal{I}(x)$.
     Therefore we obtain
     \begin{equation*}
          1 \leq z^*_i \leq g_+^{-1}(\tau \| A^{-1} \|), \quad
          i \in \mathcal{I}(x), 
     \end{equation*}
     and then (\ref{ineq:th_a^*_bound_decr}) follows.
\end{proof}
%
%
%
Theorem \ref{thm:a_bound} shows the following three advantages of our uncertainty model.
\begin{enumerate}
     \item We {provide} a bound {for} the solution of the problem 
          $\min_a \{f(a, x) \mid a \in \Omega(\tau)\}$,
          which can be explicitly calculated with $a^0$, $\tau$, and $A$.
          In the case of a general uncertainty set or a constraint 
          function~$f$, we may not solve the minimization explicitly.
     \item This is a result that supports the ``positivity'' of the worst-case scenario 
          for arbitrary $\tau$ if we use the uncertainty set \eqref{eq:omega}.
          {In particular}, the difference from existing models is observed 
          when $f$ is increasing for $a$. 
          In this case, the lower bound of $a^*(\tau; x)$ converges to zero as $\tau \to \infty$, 
          which implies that $a^*(\tau; x) > 0$ for arbitrary $\tau$.
     \item We can use this result {to determine} the level of assumed uncertainty $\tau$.
          We present the details in Subsection \ref{subsect:log_params}.
\end{enumerate}

%
%
The following proposition shows the limit of the worst-case scenario 
as $\tau \to \infty$. 
\begin{thm} \label{thm_a_lim}
     Assume that the robust optimization problem~\eqref{prob:P_a_cons} satisfies
     Assumption~\ref{assum_cons} 
     and the function {$f(\cdot, x)$} is either increasing or decreasing.
     Then,
     the worst-case scenario $a^*(\tau; x)$ at the point $x$ satisfies 
     the following statements:
     \begin{itemize}
          \item If {$f(\cdot, x)$} is increasing, 
               then for $x \in S$ that satisfies $\nabla_a f(a,x)\neq 0$ we have
               \begin{equation} \label{eq:thm_lim_a}
                    \lim_{\tau \to \infty} a^*_i(\tau; x) = 0,
                    \quad i \in \mathcal{I}(x).   
               \end{equation}
          \item If {$f(\cdot, x)$} is decreasing, 
               then for $x \in S$ that satisfies $\nabla_a f(a,x)\neq 0$ we have
               \begin{equation} \label{eq:thm_lim_a_decreasing}
                   \lim_{\tau \to \infty} a^*_i(\tau; x) = \infty,
                    \quad i \in \mathcal{I}(x). 
               \end{equation}
     \end{itemize}
\end{thm}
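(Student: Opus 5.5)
The plan is to treat the increasing case in detail; the decreasing case should follow by the same argument with $g_+$ in place of $g_-$. Fix $x\in S$ with $\nabla_a f(a,x)\neq 0$ and $i\in\mathcal{I}(x)$. By Lemma~\ref{lemma_z^*}, the worst-case scenario can be written as $a^*(\tau;x)=A^0 z^*(\tau)$ with $z^*_i(\tau)=g_-^{-1}(y^*_i(\tau))\in(0,1)$ and $\|Ay^*(\tau)\|=\tau$. Since $g_-^{-1}$ is decreasing and $g_-^{-1}(s)\to 0$ as $s\to\infty$ (because $g(t)\to\infty$ as $t\downarrow 0$), the claim $a^*_i(\tau;x)\to 0$ is equivalent to $y^*_i(\tau)\to\infty$ for each $i\in\mathcal{I}(x)$. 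The lower bound of Theorem~\ref{thm:a_bound} already tends to zero, but it is only a lower bound. So the real content is to show that the worst case actually exhausts the budget in every relevant coordinate, not just in norm.

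First, I would exploit $\|Ay^*(\tau)\|=\tau\to\infty$ together with the non-singularity of $A$. This gives $\|y^*(\tau)\|\ge \tau/\|A\|\to\infty$, so some coordinate of $y^*$ blows up. Second, I need to transfer this to every $i\in\mathcal{I}(x)$. For this I would use the KKT relations from the proof of Lemma~\ref{lemma_z^*}. From \eqref{KKT_a_z} and \eqref{KKT_y},
\[
a^0_i\,\frac{\partial f}{\partial a_i}(a^*,x)=\lambda_{m+1}\,\eta^*_i\Bigl(\frac{1}{z^*_i}-1\Bigr),\qquad \eta^*\in\partial\|Ay^*\|,
\]
where the $\eta^*$ are dual-norm bounded, since $\eta^*=A^{\T}\mu$ with $\|\mu\|_*\le 1$. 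Because $z^*_i\le 1$ and $a^*$ stays in the bounded box $[0,a^0]$ of Theorem~\ref{thm:a_bound}, the left-hand side stays bounded away from zero, provided $\partial_i f(\cdot,x)$ is bounded below by a positive constant on that box.

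Third, I would argue by contradiction. Suppose $a^*_i(\tau_k;x)\ge c>0$ along a sequence $\tau_k\to\infty$. Then the displayed identity, combined with monotonicity of $\partial_i f$ in $a_i$ (by convexity of $f(\cdot,x)$), forces $\lambda_{m+1}\eta^*_i$ to stay bounded below. Meanwhile the blowing-up coordinate $j$ has $1/z^*_j-1\to\infty$, which forces $\lambda_{m+1}\eta^*_j\to 0$. I would then compare with the feasible competitor $\hat a(\tau)=t(\tau)a^0$, where $g(t(\tau))\|Ae\|=\tau$ and $t(\tau)\to 0$. Optimality gives $f(a^*(\tau),x)\le f(t(\tau)a^0,x)$. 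Convexity gives $f(a^*,x)\ge f(\hat a,x)+\nabla_a f(\hat a,x)^{\T}(a^*-\hat a)$. If $\partial_i f>0$ near the origin and $a^*_i\ge c$ while $a^*\ge 0$, this is a contradiction provided the other components $\partial_j f(\hat a,x)\,(a^*_j-\hat a_j)$ cannot compensate. Here $a^*_j-\hat a_j\ge -\hat a_j\to 0$, so those terms are asymptotically nonnegative, and the contradiction follows.

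The main obstacle is this last step: I need uniform control of $\partial_i f(\cdot,x)$, positive and bounded near $a=0$ within the box $\prod_j[0,a^0_j]$, so that the linearization argument closes. The definition of $\mathcal{I}(x)$ only gives $\partial_i f\neq 0$ on $\R^m_{++}$, so I would first try to extract the needed lower bound from convexity (partial derivatives are monotone along coordinate directions) together with continuity on the closed box. If that fails, I would add a mild regularity assumption on $f$ near the boundary of $\R^m_{++}$.

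For the decreasing case, the same scheme uses $z^*_i=g_+^{-1}(y^*_i)$ and the fact that $g_+^{-1}(s)\to\infty$. The competitor becomes $t(\tau)a^0$ with $t(\tau)\to\infty$, and the bounded box $[0,a^0]$ is replaced by monotonicity of the partials along the ray.
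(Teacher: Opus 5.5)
The gap is the one you flag yourself, and it cannot be closed from the stated hypotheses. Your competitor argument reduces to $\nabla_a f(\hat a,x)^{\T}(a^*-\hat a)\le0$. This contradicts $a^*_i\ge c$ only if $\partial_i f(ta^0,x)$ is not small compared with $t\sum_{j\ne i}\partial_j f(ta^0,x)a^0_j$ as $t\downarrow0$, for instance if $\liminf_{t\downarrow0}\partial_i f(ta^0,x)>0$. Neither Assumption~\ref{assum_cons} nor the definition of $\mathcal I(x)$ provides this, and convexity cannot supply it. Convexity makes $\partial_i f$ nondecreasing in $a_i$ only, while along the worst-case path the \emph{other} coordinates tend to $0$, and $\partial_i f$ may vanish in that limit. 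The same objection defeats the ``bounded below'' claims in your KKT steps. Continuity up to the boundary does not help either, since $\partial_i f$ may extend by $0$ to a face of the box.

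In fact the conclusion itself fails; the paper allows any norm in $\Omega$. Take $m=2$, the $\ell_1$-norm, $A=I$, $a^0=(1,\tfrac12)^{\T}$, and $f(a,x)=\varphi(a):=\max_{0\le s\le1}\{sa_1+s^2(a_2-1)\}$.
\begin{itemize}
\item $\varphi$ is convex. By Danskin's theorem it is $C^1$ on $\R^2_{++}$ with $\nabla\varphi=(s^*,(s^*)^2)$, where $s^*=\min\{1,a_1/(2(1-a_2))\}>0$ for $a_2<1$ and $s^*=1$ otherwise. So $f(\cdot,x)$ is increasing and $\mathcal I(x)=\{1,2\}$, yet $\partial_2\varphi=a_1^2/(4(1-a_2)^2)\to0$ as $a_1\to0$.
\item Here $\Omega(\tau)=\{(z_1,z_2/2):g(z_1)+g(z_2)\le\tau\}$. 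The worst case has $z\in(0,1]^2$, since any $z_k>1$ can be replaced by the point of $(0,1)$ with the same $g$-value. There $F(z):=\varphi(z_1,z_2/2)=z_1^2/(4-2z_2)$.
\item The KKT conditions $\partial_{z_k}F=\mu(1/z_k-1)$ are necessary and sufficient. Eliminating $\mu$ gives $2(2-z_2)(1-z_2)=(1-z_1)z_2$.
\item The root of this equation in $(0,1)$ exceeds $(7-\sqrt{17})/4\approx0.719$. Hence $g(z_2^*)\le0.049$ and $g(z_1^*)\ge\tau-0.049$.
\item Therefore $z_1^*\to0$ but $z_2^*\to(7-\sqrt{17})/4$, so $a^*(\tau;x)\to(0,(7-\sqrt{17})/8)$. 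This contradicts \eqref{eq:thm_lim_a} for $i=2$.
\end{itemize}

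The paper argues more briefly. From $\|Ay^*\|=\tau$ (Lemma~\ref{lemma_z^*}) and the homogeneity $Y(\alpha\tau)=\alpha Y(\tau)$, it writes $y^*=(\tau/\tau_0)y_0$ and reads off $y^*_i\to\infty$. But $y_0=(\tau_0/\tau)y^*(\tau)$ depends on $\tau$, and in the example $y^*(\tau)\approx(\tau-0.049,\,0.049)$. So the step you singled out as ``the real content''---passing from blow-up of the norm to blow-up of every coordinate in $\mathcal I(x)$---is exactly what is missing there as well.

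What your approach does prove is the increasing case under the added hypothesis $\liminf_{t\downarrow0}\partial_i f(ta^0,x)>0$ for $i\in\mathcal I(x)$. The other partials are automatically bounded on $(0,a^0]$, by convexity along the $j$th unit direction, monotonicity, and $f(a,x)\ge f(a^0,x)-\nabla_a f(a^0,x)^{\T}a^0$. With that, your competitor $t(\tau)a^0$ plus linearization closes the contradiction, with no need for Lemma~\ref{lemma_z^*} or any KKT analysis.

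The decreasing case, however, is not ``the same scheme''. There $a^*_j$ may exceed $\hat a_j$ by an amount of order $\tau$; Theorem~\ref{thm:a_bound} only gives $a^*_j\le g_+^{-1}(\tau\|A^{-1}\|)a^0_j$. So the cross terms $\partial_j f(\hat a,x)(a^*_j-\hat a_j)$ can be negative and large, and you would need to compare $|\partial_i f|$ with $|\partial_j f|$ along the ray.
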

\begin{proof}
     Let us consider the increasing case. 
     By Lemma \ref{lemma_z^*}, $a^*(\tau; x)$, $z^*$, and $y^*$ satisfy~\eqref{eqs_lemma_wc_s_incr}.
     Now, let $Y(\tau)$ be the set defined by
     \begin{equation*}
          Y(\tau) = 
          \left\{ 
               {y \in \R^m_{++}} \bigmid \| A y \| = \tau
          \right\},
     \end{equation*}
     and let us verify that 
     \begin{equation} \label{eq:proof_Y}
          Y(\alpha \tau) = \alpha Y(\tau), \quad 
          \forall \tau, \alpha > 0.
     \end{equation}
     If $y \in Y( \alpha \tau)$, then $y$ satisfies
     $\| A y \| = \alpha \tau$.
     Hence we have $\| A (1 / \alpha) y \| = \tau$,
     which implies $(1 / \alpha) y \in Y(\tau)$.
     Thus it follows that $y \in \alpha Y(\tau)$,
     and we obtain $Y(\alpha \tau) \subseteq \alpha Y(\tau)$.
     On the contrary, let $y \in \alpha Y(\tau)$. 
     Then $y$ satisfies $\| A (1 / \alpha) y \| = \alpha \tau$,
     which implies $\| A y \| = \alpha \tau$.
     Therefore we also have $Y(\alpha \tau) \supseteq \alpha Y(\tau)$,
     and then (\ref{eq:proof_Y}) follows.
     \par
     By letting $\alpha = {\tau_0 / \tau}$ in (\ref{eq:proof_Y}), we have
     \begin{equation*}
          Y(\tau) = \frac{\tau}{\tau_0} Y(\tau_0), \quad
          \forall \tau, \tau_0 > 0,
     \end{equation*}
     and then $Y(\tau)$ is expressed as
     \begin{equation*}
          Y(\tau) = 
          \left\{ 
               y \in \R^m_{+} \bigmid y = \frac{\tau}{\tau_0} y_0, \
               \| A y_0 \| = \tau_0, \ {y_0 > 0 }
          \right\}.
     \end{equation*}
     {Note that $y^* > 0$ from Lemma~\ref{lemma_z^*}, then we have $y^* \in Y(\tau)$.}
     Thus we obtain 
     \begin{equation*}
          y^*_i = \frac{\tau}{\tau_0} (y_0)_i, \
          {(y_0)_i > 0}
     \end{equation*}
     for $i \in \mathcal{I}(x)$, which implies
     \begin{equation} \label{eq:proof_y_lim}
          \lim_{\tau \to \infty} y^*_i(\tau; x) = \infty,
                    \quad i \in \mathcal{I}(x).   
     \end{equation}
    In the increasing case, we observe that $z^*$ goes to 0 from the second equality of~\eqref{eqs_lemma_wc_s_incr} and~\eqref{eq:proof_y_lim}.
Thus we have~\eqref{eq:thm_lim_a} from the first equality of~\eqref{eqs_lemma_wc_s_incr}.
In the case where $f$ is decreasing, $z^*$ approaches to $\infty$,
which results in~\eqref{eq:thm_lim_a_decreasing} by a similar argument.
\end{proof}

\begin{remark}
     In the case where $f(\cdot, x)$ is increasing, 
     we can evaluate the convergence speed 
     of the bound of the worst-case scenario given in~\eqref{ineq:th_a^*_bound_incr}.
     We have the following inequality:
     \begin{equation*}
          \beta (\alpha, \delta) \ 
          {\rm exp} \left(- \frac{y}{\alpha} \right) \leq 
          g^{-1} (y) \leq 
          {\rm exp} (- y), \quad 
          y \geq 0,
     \end{equation*} 
     where $\beta(\alpha, \delta) = \exp(- \delta / \alpha)$ 
     and $\alpha, \delta \in (0, 1)$ satisfy 
     $\delta \geq - (1 - \alpha) + (1 - \alpha) \ln (1 - \alpha) + 1$.
     Then if we set for example $\alpha = 1 / 2$ 
     then we can evaluate the bound in~\eqref{ineq:th_a^*_bound_incr}:
     \begin{equation*}
          \frac{2 a^0_i}{e} \ 
          {\rm exp} \left(- 2 \|A^{-1}\| \tau \right) \leq 
          g_-^{-1} \left(\|A^{-1}\| \tau \right) \ a^0_i \leq 
          a^0_i {\rm exp} \left(- \|A^{-1}\| \tau \right).
     \end{equation*}
\end{remark}
%
%

The following proposition shows that the robust optimal value is increasing with respect to~$\tau$, which indicates that the robust optimal value becomes worse, in the sense of the original problem~\eqref{prob:P_a_cons}, as the level of assumed uncertainty increases.
\begin{prop} \label{prop:opt_value_decr}
     Let $x^*(\tau_1)$ and $x^*(\tau_2)$ be robust optimal {solutions} 
     of the robust counterpart \eqref{prob:RC_P_cons} for 
     $\tau = \tau_1, \tau_2$, respectively.
     Then 
     $$
          \tau_1 \leq \tau_2 \implies 
          {f_0(x^*(\tau_1)) \leq  f_0(x^*(\tau_2))}.
     $$
\end{prop}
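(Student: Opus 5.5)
The plan is the standard nesting-of-feasible-sets argument, with Proposition~\ref{prop:property_basic}(ii) as the key input. Fix $\tau_1 \le \tau_2$. For $j=1,2$, denote by $F(\tau_j)$ the feasible set of \eqref{prob:RC_P_cons}:
\[
F(\tau_j) \coloneqq \left\{ x \in S \bigmid \min_{a \in \Omega(\tau_j)} f(a,x) \ge 0 \right\}.
\]
The claim reduces to the inclusion $F(\tau_2) \subseteq F(\tau_1)$. Once we have it, $x^*(\tau_1)$ minimizes $f_0$ over the larger set, so $f_0(x^*(\tau_1)) \le f_0(x)$ for every $x \in F(\tau_2)$. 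Taking $x = x^*(\tau_2)$ gives the inequality.

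To prove the inclusion, take $x \in F(\tau_2)$. Then $f(a,x) \ge 0$ for every $a \in \Omega(\tau_2)$. Proposition~\ref{prop:property_basic}(ii) gives $\Omega(\tau_1) \subseteq \Omega(\tau_2)$. Hence $f(a,x) \ge 0$ for every $a \in \Omega(\tau_1)$, i.e., $\min_{a \in \Omega(\tau_1)} f(a,x) \ge \min_{a\in\Omega(\tau_2)} f(a,x) \ge 0$. Since $x \in S$ is unchanged, $x \in F(\tau_1)$. This step holds whether the robust constraint is written with a min or as ``for all $a$''; if the minimum is not attained we read it as an infimum, and the monotonicity is unaffected.

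The only point needing care is the hypothesis of Proposition~\ref{prop:property_basic}. As stated, it assumes $A$ is nonsingular. However, the proof of part (ii) uses only $\|Ay\|\le\tau_1\le\tau_2$, so monotonicity holds for any $A$, and I will note this rather than add an assumption. The statement also presupposes that robust optimal solutions $x^*(\tau_1)$ and $x^*(\tau_2)$ exist, which we take as given. With those remarks, nothing is hard here: the argument needs no differentiability or monotonicity of $f$ and none of the duality machinery.
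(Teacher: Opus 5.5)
Your proposal is correct and follows essentially the same route as the paper: Proposition~\ref{prop:property_basic}(ii) gives $\Omega(\tau_1)\subseteq\Omega(\tau_2)$, so the robust feasible set for $\tau_2$ is contained in the one for $\tau_1$, and minimizing $f_0$ over the larger set gives a value no worse. Your side remarks are also accurate: the proof of part (ii) never uses nonsingularity of $A$, and existence of $x^*(\tau_1)$ and $x^*(\tau_2)$ is implicitly assumed in the statement.
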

\begin{proof}
     This can be verified from Proposition \ref{prop:property_basic}.
     Let $X(\tau)$ be the feasible set of the robust counterpart~\eqref{prob:RC_P_cons}.
     Then the proposition shows that 
     $\tau_1 \leq \tau_2$ implies $X(\tau_1)\supseteq X(\tau_2)$.
     Thus we have {$f_0(x^*(\tau_1)) \leq f_0(x^*(\tau_2))$}.
\end{proof}
As in Proposition~\ref{prop:property_basic}, the above result also implies that the parameter~$\tau$ corresponds to the level of uncertainty assumed in our model.


\subsection{The case of uncertainties in the objective function}
Next, we consider the case of uncertainty 
in the objective function, formulated as follows:
\begin{equation*}
     \label{prob:P_a_obj}
     \tag{$\text{P}_{a\text{ obj}}$}
     \begin{aligned}
          &{\displaystyle \max_x} \quad f(a, x) \\
          & \ {\rm s.t.} \hspace{5mm} x \in S.
     \end{aligned}
\end{equation*}
\begin{assum} \label{assum_obj}
     Assume that the problem~\eqref{prob:P_a_obj} satisfies 
     the following conditions.
     \begin{enumerate}
     \item The set $S$ is convex.
     \item The uncertain parameter $a$ {takes} only positive values, 
          {i.e.}, $a \in {\R^m_{++}}$.
     \item The function $f \colon \R^m \times \Rn \rightarrow \R$ is convex with respect to~$a$ 
          and concave with respect to~$x$.
     \end{enumerate}
\end{assum}
The robust counterpart of the problem~\eqref{prob:P_a_obj} 
obtained by using the proposed uncertainty set is written as follows:
\begin{equation}
     \label{prob:RC_P_obj2}
     \tag{$\text{RC}_{\text{P obj}}$}
     \begin{aligned}
          & \hspace{-3mm} {\displaystyle \max_{x}} \quad 
               \min_{a \in \Omega(a^0, A, \tau)} f(a,x) \\
          & \hspace{-1mm} {\rm s.t.} \hspace{7mm} x \in S.
     \end{aligned} 
\end{equation}

\begin{remark}
     The robust counterpart~\eqref{prob:RC_P_obj2} 
     contains uncertainty in the objective function. 
     Note that it is essentially equivalent to uncertainty in a constraint.
     Indeed, if we introduce new auxiliary variables $t \in \R$, 
     then the problem~\eqref{prob:RC_P_obj2} can be rewritten as
     \[ 
     \begin{aligned}
          &{\displaystyle \max_{x, t}} \quad  t \\
          & \ {\rm s.t.} \hspace{5mm} \min_{a \in \Omega(a^0, A, \tau)} \
               f(a, x) - t \geq 0 \\
          & \hspace{10mm} x \in S. \\
     \end{aligned}
     \]
     This results in a problem with uncertain parameters in its constraint. 
\end{remark}

For the problem~\eqref{prob:RC_P_obj2}, 
as in the previous case, 
the boundedness and limit of the worst-case scenario, 
and the monotonicity of robust optimal value also hold.
However, we present the expression and proof for
the monotonicity of robust optimal value again,
as it differs from the previous case.
\begin{prop}
     Let $x^*(\tau_1)$ and $x^*(\tau_2)$ be the robust optimal solution 
     of the robust counterpart \eqref{prob:RC_P_obj2} for 
     $\tau = \tau_1, \tau_2$, respectively.
     Then 
     $$
          \tau_1 \leq \tau_2 \implies 
          {f(a^{**}(\tau_1), x^*(\tau_1))} \geq {f(a^{**}(\tau_2), x^*(\tau_2))}.
     $$
\end{prop}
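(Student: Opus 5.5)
The plan is to express the robust optimal value as a max–min value function and use the nested structure of the uncertainty sets from Proposition~\ref{prop:property_basic}. Define
\[
\phi(x;\tau) \coloneqq \min_{a \in \Omega(a^0, A, \tau)} f(a,x) = f(a^*(\tau;x), x).
\]
By the definition of the robust scenario, $f(a^{**}(\tau), x^*(\tau)) = \phi(x^*(\tau);\tau)$. This quantity is the optimal value of \eqref{prob:RC_P_obj2}, because $x^*(\tau)$ maximizes $\phi(\cdot;\tau)$ over $S$. The claim therefore reduces to showing that the optimal value $\max_{x\in S}\phi(x;\tau)$ is nonincreasing in $\tau$.

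The key steps, in order, are as follows.

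First, fix $\tau_1 \le \tau_2$. By Proposition~\ref{prop:property_basic}(ii), $\Omega(a^0,A,\tau_1) \subseteq \Omega(a^0,A,\tau_2)$. A minimum over a larger set cannot be larger, so for every $x \in S$,
\[
\phi(x;\tau_1) \ge \phi(x;\tau_2).
\]

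Second, we chain inequalities. We take $x = x^*(\tau_2)$ in the previous bound and then use the optimality of $x^*(\tau_1)$ for the $\tau_1$ problem. Note that $x^*(\tau_2) \in S$, since the feasible set $S$ does not depend on $\tau$. This gives
\[
f(a^{**}(\tau_1),x^*(\tau_1)) = \phi(x^*(\tau_1);\tau_1) \ge \phi(x^*(\tau_2);\tau_1) \ge \phi(x^*(\tau_2);\tau_2) = f(a^{**}(\tau_2),x^*(\tau_2)).
\]
This is the desired inequality.

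This argument differs from that of Proposition~\ref{prop:opt_value_decr}. There the feasible sets shrink as $\tau$ grows. Here the feasible set is fixed and the inner minimum decreases pointwise.

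The only delicate point is well-definedness: the minima defining $a^*(\tau;x)$ and $a^{**}(\tau)$ must be attained. I would justify this by noting that $\Omega(\tau)$ is compact and contained in $\R^m_{++}$. Its $z$-components are confined to a sublevel set of $g$, which is bounded and bounded away from zero, and this also uses the non-singularity of $A$ from Proposition~\ref{prop:property_basic}. Since $f(\cdot,x)$ is convex, hence continuous on $\R^m_{++}$, the minimum is attained. Existence of $x^*(\tau)$ is part of the hypothesis of the proposition. Otherwise the argument is elementary, and I expect no real obstacle.
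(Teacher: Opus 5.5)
Your proof is correct and follows essentially the same route as the paper. Both use the nesting $\Omega(\tau_1)\subseteq\Omega(\tau_2)$ from Proposition~\ref{prop:property_basic}(ii) to get the pointwise inequality $\phi(x;\tau_1)\ge\phi(x;\tau_2)$ on the $\tau$-independent set $S$, then pass to the maximum over $S$; your chain through $x^*(\tau_2)$ is just the unpacked form of the paper's ``take the maximum on both sides,'' and your compactness remark on attainment of $a^*(\tau;x)$ is a useful detail the paper leaves implicit.
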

\begin{proof}
     Since $\tau_1 \leq \tau_2$, it follows from Proposition \ref{prop:property_basic} that 
     $\Omega(\tau_1) \subseteq \Omega(\tau_2)$.
     This implies that for any $x \in S$, the minimum over the larger set cannot be greater than the minimum over the smaller set. Thus, we have:
     \begin{equation*}
          \min_{a \in \Omega(\tau_1)} f(a, x) \geq \min_{a \in \Omega(\tau_2)} f(a, x), \quad \forall x \in S.
     \end{equation*}
     Taking the maximum over all $x \in S$ on both sides preserves the inequality, yielding:
     \begin{equation*}
          \max_{x \in S} \min_{a \in \Omega(\tau_1)} f(a, x) \geq \max_{x \in S} \min_{a \in \Omega(\tau_2)} f(a, x).
     \end{equation*}
     By the definition of the robust optimal value, this is exactly 
     $$f(a^{**}(\tau_1), x^*(\tau_1)) \geq f(a^{**}(\tau_2), x^*(\tau_2)),$$
     which completes the proof.
\end{proof}
\par
The following theorem shows that 
we can evaluate the robust optimal value for every value of~$\tau$.
This is clearly verified by duality.
\begin{thm} \label{thm:opt_bound}
     Assume that the robust optimization problem~\eqref{prob:P_a_obj} satisfies
     Assumption~\ref{assum_obj}.
     Then,
     the robust optimal value 
     $f^*_{\rm robust}(\tau) \coloneqq \max_x \min_a f(a, x)$ 
     satisfies the following statements{:}
     \begin{equation*}
          m(\tau; x, u, v, s^1) \leq f^*_{\rm robust}(\tau), 
           \quad \forall (x, u, v, s^1) \in \mathcal{F}, 
     \end{equation*}
     where the function $m: \Rn \times \R^m \times \R^m \times \R^m \to \R$
     and the set $\mathcal{F} \subset \Rn \times \R^m \times \R^m \times \R^m$
     are defined, respectively, as
     \begin{align*}
          m(\tau; x, u, v, s^1) =
               \sum_{k = 1}^m u_k 
               \ln \left( 1 - \frac{a^0_k v_k}{u_k} \right) 
               - \tau \| s^1 \|_* 
               - f^*(-v, x), \\
          \mathcal{F} = 
               \left\{ (x, u, v, s^1) \in \Rn \times \R^m \times \R^m \times \R^m \mid
               x \in S, \ u = A^{\T} s^1, \ u \in \R^m _{++}, \
               u - A^0 v \in \R^m _{++} \right\}.
     \end{align*}
\end{thm}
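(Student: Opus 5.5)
The plan is to obtain the bound as weak duality applied to the dual reformulation of the objective-uncertainty problem. Here the model is $\Omega(\tau)=\Omega(a^0,\tau,A)$, that is, \eqref{eq:omega_general} with $V=O$ (or $\delta=0$). The key identity is the one established in the proof of the corollary giving \eqref{prob:dual_RC_P_obj}. For each fixed $x\in S$, using Fenchel duality together with $a^0\in\mathrm{ri}\,\Omega$ and $a^0\in\mathrm{ri\,dom}(f(\cdot,x))$,
\begin{equation*}
\min_{a\in\Omega(\tau)} f(a,x)=\max_{v\in\R^m}\left\{-(a^0)^{\T}v-\delta^*(\tilde B^{\T}v\mid Z)-f^*(-v,x)\right\}.
\end{equation*}
Only the inequality ``$\geq$'' for an arbitrary fixed $v$ is actually needed. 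That direction is elementary: for any $a\in\Omega$ we have $f(a,x)\ge -v^{\T}a - f^*(-v,x)$ by the Fenchel--Young inequality, and $-v^{\T}a\ge -\delta^*(v\mid\Omega)$. Also $\delta^*(v\mid\Omega)=(a^0)^{\T}v+\delta^*(\tilde B^{\T}v\mid Z)$ by the translation argument of Corollary~\ref{cor:Ben_dual_arg}.

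Next, I would bound $\delta^*(\tilde B^{\T}v\mid Z)$ from above by evaluating the minimization formulas of Lemmas~\ref{lemma_delta^*_1}--\ref{lemma_delta^*_2} (combined via \cite[Lemma 6.4]{Ben15} as in \eqref{delta^*_Z}) at a feasible point, rather than at the minimizer. With $V=O$, I take $s^2=0$ and $w=A^0v$, and choose $u=A^{\T}s^1$ with $u\in\R^m_{++}$ and $u-A^0v\in\R^m_{++}$, which is exactly membership in $\mathcal F$. This gives
\begin{equation*}
\delta^*(\tilde B^{\T}v\mid Z)\le -\sum_k\left\{u_k\ln\left(1-\frac{a^0_kv_k}{u_k}\right)+a^0_kv_k\right\}+\tau\|s^1\|_*.
\end{equation*}
The $\varepsilon$ terms cancel because $u=A^{\T}s^1$, as noted in the proof of Theorem~\ref{thm_dual}. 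Substituting, the $(a^0)^{\T}v$ term cancels against $\sum_k a^0_kv_k$. What remains is $\min_{a\in\Omega(\tau)}f(a,x)\ge m(\tau;x,u,v,s^1)$.

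Finally, since $x\in S$, we have $\min_{a}f(a,x)\le \max_{x'\in S}\min_a f(a,x')=f^*_{\rm robust}(\tau)$, and the claim follows for every $(x,u,v,s^1)\in\mathcal F$. The main obstacle is bookkeeping rather than substance. I must check the sign conventions: the convex conjugate $f^*$ is taken in the first argument of the convex function $f(\cdot,x)$, and the argument $-v$ must match $w=A^0v$ under the sign used in the corollary. I also need the upper bound on the support function to hold, which is immediate since the lemmas express it as an infimum over exactly these variables. If the sign convention turns out to give $w=-A^0v$ instead, I would replace $v$ by $-v$ throughout and recheck that the formula for $m$ is reproduced.
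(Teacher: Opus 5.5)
Your proposal is correct and follows the paper's route, which only says the bound ``is clearly verified by duality''. That route is to take \eqref{prob:dual_RC_P_obj} with $V=O$, so that $w=A^0v$, and note that any feasible point lower-bounds the maximum; your sign worry is unfounded, since $\tilde B^{\T}v=(0,A^0v)$ is consistent with the $f^*(-v,x)$ term. Your observation that only weak duality is needed (Fenchel--Young plus evaluating the support-function formulas at a feasible point) is correct and removes any reliance on the strong-duality hypotheses. One small slip: drop the parenthetical ``or $\delta=0$''. For $V\neq O$ and $\delta=0$ the extra constraint forces $V(z-e)=0$, which shrinks the set (to $\{a^0\}$ if $V$ is nonsingular) rather than reducing \eqref{eq:omega_general} to \eqref{eq:omega}.
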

We can obtain one lower bound if we choose one set of  $(x, u, v, s^1) \in \mathcal{F}$.
An example is presented below.
\begin{example} \label{ex:opt_bound}
     If $f(a, x) = a^{\T} x$, lower bounds $m$ are given as
     \begin{gather*}
          m(\tau; x, u, s^1) =
               \sum_{k = 1}^n u_k 
               \ln \left( 1 + \frac{a^0_k x_k}{u_k} \right) 
               - \tau \| s^1 \|_*, \\
          \mathcal{F} = 
               \left\{ (x, u, s^1) \in \Rn \times \R^m \times \R^m \mid
               x \in S, \ u = A^{\T} s^1, \ u \in \R^m _{++}, \
               u + {A^0} x \in \R^m _{++} \right\}.
     \end{gather*}
     Then, in this case, if we set 
     $u = t A^0 x^0 \ (t > 0)$, 
     $s^1 = (A^{\T})^{-1} u$ for $x^0 \in S$, we have a lower bound
     \begin{equation} \label{eq:ex_opt_bound}
          m(\tau; x^0, u, s^1) = 
          t \ln \left(1 + \frac{1}{t} \right) (a^0)^{\T} x^0
          - t \tau \| (A^{\T})^{-1} A^0 x^0 \|_*.
     \end{equation}
\end{example}
\begin{cor}
     Assume that the robust optimization problem~\eqref{prob:P_a_obj} satisfies
     Assumption~\ref{assum_obj} 
     and the function {$f(\cdot, x)$} is increasing.
     Then,
     the following statements hold for $x \in S$ that satisfy 
     $\nabla_a f(a, x) \neq 0$:
     \begin{itemize}
          \item $f(a^*(\tau; x), x) \geq
               {\displaystyle \lim_{a \downarrow 0}} \ f(a, x)$ 
               for all $\tau >0$.
          \item ${\displaystyle \lim_{\tau \to \infty}} \
               f(a^*(\tau; x), x) = 
               {\displaystyle \lim_{a \downarrow 0}} \ f(a, x)$.
     \end{itemize}
\end{cor}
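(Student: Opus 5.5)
The plan is to reduce both statements to the bounds on the worst-case scenario already established for the constraint case. Lemma~\ref{lemma_z^*} and Theorems~\ref{thm:a_bound} and~\ref{thm_a_lim} only use the minimization $\min_{a\in\Omega(\tau)} f(a,x)$ at a fixed $x$. As the preceding remark notes, uncertainty in the objective is equivalent to uncertainty in a constraint (replace $f$ by $f(a,x)-t$, which has the same $a$-gradient). Hence for each fixed $x \in S$ with $\nabla_a f(a,x)\neq 0$ these results apply verbatim to $a^*(\tau;x)$. Note that Assumption~\ref{assum_obj} omits differentiability; I will assume it implicitly, since the hypothesis $\nabla_a f \neq 0$ presupposes it.

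For the first bullet, I will use that $f(\cdot,x)$ is increasing (nonnegative partial derivatives on $\R^m_{++}$). By Lemma~\ref{lemma_z^*}, $0<z^*_i<1$ for $i\in\mathcal{I}(x)$. For $i\notin\mathcal{I}(x)$ the optimum can be taken with $z^*_i\le 1$, since lowering $a_i$ does not increase $f$; alternatively I only need $a^*(\tau;x)\in\R^m_{++}$. For any $a\in\R^m_{++}$ and $\epsilon\in(0,1)$, the point $\epsilon a$ lies componentwise below $a$. Monotonicity along the segment from $\epsilon a$ to $a$ (the mean value theorem with nonnegative gradient) gives $f(a,x)\ge f(\epsilon a,x)$. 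Letting $\epsilon\downarrow 0$, I interpret $\lim_{a\downarrow 0} f(a,x)$ as the limit along such positive vectors tending to $0$. This limit exists in $[-\infty,\infty)$ by monotonicity, and I obtain $f(a^*(\tau;x),x)\ge \lim_{a\downarrow0}f(a,x)$. A cleaner variant is to set $\phi(\epsilon)=f(\epsilon e,x)$, which is nondecreasing in $\epsilon$. Any $a \in \R^m_{++}$ satisfies $a\ge (\min_i a_i) e$, so $f(a,x)\ge \phi(\min_i a_i)\ge \lim_{\epsilon\downarrow0}\phi(\epsilon)$.

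For the second bullet, Theorem~\ref{thm_a_lim} gives $a^*_i(\tau;x)\to 0$ for $i\in\mathcal{I}(x)$. The indices outside $\mathcal{I}(x)$ are the main obstacle. Because $\Omega(\tau)$ is nested (Proposition~\ref{prop:property_basic}), $\tau\mapsto f(a^*(\tau;x),x)$ is nonincreasing and bounded below by the first bullet, so the limit exists. For the upper bound I will avoid analyzing $a^*$ coordinatewise. Instead I will exhibit explicit feasible points: for large $\tau$, the point $a=\epsilon a^0$ (i.e.\ $z=\epsilon e$, $y=g(\epsilon)e$) belongs to $\Omega(\tau)$ as soon as $g(\epsilon)\|Ae\|\le\tau$. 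Hence $f(a^*(\tau;x),x)\le f(\epsilon a^0,x)$ for all $\tau\ge g(\epsilon)\|Ae\|$. Taking $\tau\to\infty$ and then $\epsilon\downarrow0$ yields $\lim_{\tau\to\infty}f(a^*(\tau;x),x)\le\lim_{a\downarrow0}f(a,x)$. Combined with the first bullet this gives equality. This feasible-point argument in fact makes the corollary independent of the coordinates $i\notin\mathcal{I}(x)$, and uses Theorem~\ref{thm_a_lim} only as motivation.
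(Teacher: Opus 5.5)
Your proof is correct. For the first bullet it is the paper's argument written out: $f(\cdot,x)$ is increasing and $\Omega(\tau)\subset\R^m_{++}$.

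For the second bullet you take a different route. The paper simply invokes Theorem~\ref{thm_a_lim}, which gives $a^*_i(\tau;x)\to 0$, together with continuity of $f(\cdot,x)$. You instead squeeze the value $f(a^*(\tau;x),x)$. The lower bound comes from the first bullet. The upper bound comes from the explicit feasible points $\epsilon a^0\in\Omega(\tau)$, which are admissible once $\tau\ge g(\epsilon)\|Ae\|$.

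The paper's route is shorter given the earlier results, and it says something about the worst-case scenario itself, not just its value. As written, however, it only controls the coordinates $i\in\mathcal{I}(x)$. So it needs an additional argument when $\mathcal{I}(x)\neq\{1,\dots,m\}$. It also relies on continuity at $0$. That is available if $f(\cdot,x)$ is finite and convex on all of $\R^m$, as the literal Assumption~\ref{assum_obj} says, but not in the positive-domain setting the section has in mind, where the limit may be $-\infty$.

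Your squeeze works in either setting. It needs none of Lemma~\ref{lemma_z^*}, Theorem~\ref{thm_a_lim}, nonsingularity of $A$, or uniqueness of the minimizer. It does not even use the hypothesis $\nabla_a f(a,x)\neq 0$, so it proves a slightly stronger statement.

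One small point to tidy. To identify $\lim_{\epsilon\downarrow 0} f(\epsilon a^0,x)$ with $\lim_{a\downarrow 0} f(a,x)$, that is, to get path independence of the limit, state the two-sided sandwich
\[
\phi\bigl(\min_i a_i\bigr)\le f(a,x)\le \phi\bigl(\max_i a_i\bigr),
\]
where $\phi(\epsilon)=f(\epsilon e,x)$. You only wrote the lower half.
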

\begin{proof}
     The first statement is easily verified 
     by the assumption that $f(\cdot, x)$ is increasing.
     The second statement is clear from Theorem~\ref{thm_a_lim}
     since $f(\cdot, x)$ is a continuous function.
\end{proof}


\subsection{Determination of parameters in the proposed model} \label{subsect:log_params}
As we have seen before, 
the proposed uncertainty model \eqref{eq:omega} has three parameters 
$a^0$, $\tau$, and $A$.
In this subsection, we introduce {methods} to determine their values
when we are given data of the uncertain parameter $a \in \R^m_{++}$.
Throughout this subsection, let the vectors $a^1, \dots, a^N$ 
denote $N$ observed data vectors.
\subsubsection*{Nominal value $a^0$}
We can use the sample mean of the given data
$\bar{a} = (1 / N) \sum_{i = 1}^N a^i$.
%
%
\subsubsection*{Matrix $A$}
We determine the matrix $A$, which captures the correlation between the components $a_i$ and $a_j$,
based on the covariance matrix computed from the data.
The concrete procedure is as follows:
\begin{enumerate}
     \item Let $A^0 \coloneqq \text{diag} (a^0)$ and 
           transform the data $a^i$ using {$A^0$} as  
           $z^i = (A^0)^{-1} a^i, \ i = 1, \dots, N$.
     \item Measure the variation of the data $z^i$ from {$e$} 
           with the function $g(t) = t - \ln t - 1$ as 
           $y^i_j = g(z^i_j), \ j = 1, \dots, m$.
     \item Compute the covariance matrix $\Sigma$ of 
           the vectors $y^i, \ i = 1, \dots, N$.
     \item Set $A = \Sigma^{- 1 / 2}$.
\end{enumerate}
This procedure is analogous to the one used to determine the parameters of an
ellipsoidal uncertainty set.
If the number of data points is sufficiently large, 
then the covariance matrix $\Sigma$ is symmetric positive definite, and thus $A = \Sigma^{-1/2}$ is also symmetric positive definite.
%
%
\subsubsection*{The level of assumed uncertainty $\tau$}
The value of the parameter $\tau$ is especially important 
because it corresponds to the size of our uncertainty model, in other words, 
the level of uncertainty assumed by the model.
We introduce three ways to choose the value of $\tau$.
Note that {in what follows} we assume that the values of the other parameters $a^0$ and $A$ {are} 
already {fixed}. 

%
The first {method is to choose the smallest value of $\tau$ such that} all the observed data $a^1, \dots, a^N$ 
belong to the uncertainty set. 
The following proposition is useful for determining whether
a given point $\hat{a}$ belongs to the uncertainty set.
\begin{prop} \label{prop:a_in_Omega}
     Assume that a vector $\hat{a} \in \R^m_{++}$ satisfies
     \begin{equation*}
          \| A \hat{y} \| \leq \tau,
     \end{equation*}
     where
     $\hat{y}_j = g(\hat{a}_j / a^0_j), \ j = 1, \dots, m$.
     Then $\hat{a}$ belongs to the set $\Omega(a^0, \tau, A)$ 
     defined in \eqref{eq:omega}.
\end{prop}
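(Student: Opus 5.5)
The plan is to verify membership directly from the definition~\eqref{eq:omega} by exhibiting explicit witnesses $z$ and $y$. The natural choice is $\hat{z} \coloneqq (A^0)^{-1}\hat{a}$, that is, $\hat{z}_j = \hat{a}_j / a^0_j$ for $j = 1, \dots, m$, together with $y \coloneqq \hat{y}$. I then check the three defining conditions of $\Omega(a^0, \tau, A)$ in turn.

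First, since $a^0 \in \R^m_{++}$, the matrix $A^0 = \diag(a^0)$ is nonsingular. Hence $\hat{z}$ is well defined and satisfies $A^0 \hat{z} = \hat{a}$, which is the first condition. Moreover, $\hat{a} \in \R^m_{++}$ gives $\hat{z} \in \R^m_{++}$. This is needed so that $g(\hat{z}_j) = \hat{z}_j - \ln \hat{z}_j - 1$ is defined, and so that $\hat{y}$ in the statement is a genuine real vector.

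Second, by construction $\hat{y}_j = g(\hat{a}_j/a^0_j) = \hat{z}_j - \ln \hat{z}_j - 1$. So the componentwise inequality $\hat{z}_j - \ln \hat{z}_j - 1 \le \hat{y}_j$ holds with equality for every $j$. Third, the norm condition $\|A\hat{y}\| \le \tau$ is exactly the hypothesis of the proposition. All three conditions therefore hold with the witnesses $(\hat{z}, \hat{y})$, and so $\hat{a} \in \Omega(a^0, \tau, A)$.

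There is no real obstacle here. The only point requiring care is the positivity of $\hat{a}$ and $a^0$, which guarantees that $\hat{z}$ lies in the domain of $g$. No assumption on $A$, such as nonsingularity, is needed, because the inequality form $g(z_j) \le y_j$ in the definition lets us take $y$ equal to $\tilde{g}(\hat{z})$ exactly.
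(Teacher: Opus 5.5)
Your proof is correct and follows the paper's argument: the paper also takes $\hat{z} = (A^0)^{-1}\hat{a}$ and $y = \hat{y}$, so that $\hat{y}_j = g(\hat{z}_j)$, and reads membership off the definition~\eqref{eq:omega}. Your remarks that positivity of $\hat{a}$ and $a^0$ puts $\hat{z}$ in the domain of $g$, and that nonsingularity of $A$ is not needed, are accurate points the paper leaves implicit.
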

\begin{proof}
     Let $\hat{z} = (A^0)^{-1} \hat{a}$ 
     and then we have $\hat{y}{_j} = g(\hat{z}{_j})$.
     Hence, by the definition of $\Omega(a^0, \tau, A)${,}
     it is clear that $\hat{a}$ belongs to $\Omega(a^0, \tau, A)$.
\end{proof}
From Proposition \ref{prop:a_in_Omega}, 
all the observed data belong to the uncertainty set if we set
\begin{equation*}
     \tau_{\rm full} \coloneqq \max_{i = 1, \dots, N} \| A y^i \|,
\end{equation*}
where
$y^i_j = g ( a^i_j / a^0_j )$,  
$i = 1, \dots, N, \ j = 1, \dots, m$.
%

%
The second method uses the lower bound of the worst-case scenario
obtained in Theorem~\ref{thm:a_bound}.
Consider the case where $f(\cdot, x)$ is increasing.
If we want to ensure that the $i$th element of the worst-case scenario 
$a^*(\tau, x^*(\tau))$ is larger than $\beta_i a^0_i$, then we can set~$\tau$ as 
\begin{equation*}
     \tau \leq \min_{i = 1, \dots, {m}} \frac{g(\beta_i)}{\| A^{-1} \|}.
\end{equation*}
%

%
The third method applies to the case 
where the objective function of the optimization problem contains uncertain parameters.
In this case,  we can utilize the lower bound of the robust optimal value
obtained in Theorem~\ref{thm:opt_bound}.
For example, if we want to make the robust optimal value of the problem 
considered in Example~\ref{ex:opt_bound} larger than~$\gamma$, 
then, by~\eqref{eq:ex_opt_bound}, it is sufficient to choose $\tau$ such that
\begin{equation*}
     \tau \leq 
     \frac{t \ln \left(1 + \frac{1}{t} \right) (a^0)^{\T} x^0 - \gamma}
     {t \| (A^{\T})^{-1} A^0 x^0 \|_*}.
\end{equation*}


\subsection{Probabilistic guarantee}

In this subsection, we derive a probabilistic guarantee result 
for the feasibility of the robust feasible solutions 
obtained by the proposed model.
Let us consider the following constraint in the problem~\eqref{prob:P_a_cons}:
\[
    f(a, x) \geq 0,
\]
and the robust counterpart is given as
\begin{equation} \label{ineq:rc_pro}
    \min_{a \in \Omega(a^0, A, \tau)} f(a, x) \geq 0.
\end{equation}
Note that the following result does not need any assumptions on the constraint function~$f$ such as convexity or monotonicity. 

\begin{assum} \label{assum:pro_gua}
    We assume that
    the uncertain parameter $a \in {\R^m _{++}} $ follows 
    the lognormal distribution 
    with mean $\mu$ and covariance matrix $\Sigma$, 
    and that each element of $a$ is independent.
\end{assum}

The lognormal distribution is one of the most common probability distributions
with only positive values.
The assumption of the independence of each $a_i$ implies 
that $\Sigma$ is a diagonal matrix 
and that the random variables $\ln a_i$ are also independent. 
Note also that, from \cite{Ait57}, $\mu^{\ln}$ and $\Sigma^{\ln}$ are written via $\mu$ and $\Sigma$ as 
\begin{equation*}
    \mu^{\ln}_i = \ln \mu_i - \frac{1}{2} \ln \left(1 + \frac{\sigma_i^2}{\mu_i^2} \right), \quad
    (\sigma^{\ln}_i)^2 = \ln \left(1 + \frac{\sigma_i^2}{\mu_i^2} \right)
\end{equation*}
{where $\sigma^2_i \coloneqq \Sigma_{i,i}$.}

Under this assumption, 
let $\mu^{\ln}$ and $\Sigma^{\ln}$ be the mean and the covariance matrix 
of the normal distribution to which the random vector $\ln a$ follows, respectively,  
where $\ln a = (\ln a_1, \dots, \ln a{_m})^{\T}$.
Moreover, let $\lambda$ be the maximum eigenvalue of the diagonal matrix $\Sigma^{\ln}${, i.e., $\lambda \coloneqq \max_i \lambda_i$}.
We first show the following lemma that will be used in the main Theorem~\ref{thm:pro_gua}.
\begin{lemma} \label{lemma:g_ln}
    Let $\alpha > 0$. 
    Then the following inequality holds: 
    \begin{equation*}
        \alpha g(t) \leq \left| \ln t \right|, \quad 
            \forall t \in 
            {\left( \max \left\{ 0, 1 - \frac{1}{\alpha} \right\}, 
            1 + \frac{1}{\alpha} \right]}.
    \end{equation*} 
\end{lemma}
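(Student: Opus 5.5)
The plan is to split the interval at $t=1$, where both sides vanish, and reduce each half to a one-variable monotonicity argument. On each side of $1$ the sign of $\ln t$ is fixed, so the absolute value can be removed. The inequality then becomes the sign of an auxiliary function that vanishes at $t=1$, and a derivative computation shows it is monotone on the relevant piece of the interval.

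\emph{Case $1 \le t \le 1+\frac{1}{\alpha}$.} Here $|\ln t| = \ln t$, so the claim is equivalent to $\varphi(t) \coloneqq (1+\alpha)\ln t - \alpha(t-1) \ge 0$. We have $\varphi(1)=0$ and $\varphi'(t) = \frac{1+\alpha}{t} - \alpha$. This derivative is nonnegative exactly when $t \le \frac{1+\alpha}{\alpha} = 1+\frac{1}{\alpha}$. Hence $\varphi$ is nondecreasing on $[1, 1+\frac1\alpha]$, which gives $\varphi(t)\ge \varphi(1) = 0$. This is precisely where the upper endpoint $1+\frac1\alpha$ of the interval comes from.

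\emph{Case $\max\{0,1-\frac1\alpha\} < t < 1$.} Here $|\ln t| = -\ln t$. The claim $\alpha(t-\ln t-1) \le -\ln t$ rearranges to $\psi(t) \coloneqq \alpha(t-1) + (1-\alpha)\ln t \le 0$. Again $\psi(1)=0$, and $\psi'(t) = \alpha + \frac{1-\alpha}{t} = \frac{\alpha t - (\alpha-1)}{t}$. I would then distinguish two subcases.
\begin{itemize}
\item If $\alpha \le 1$, the lower endpoint is $0$ and $\psi'(t) > 0$ for all $t>0$.
\item If $\alpha > 1$, the lower endpoint is $1-\frac1\alpha > 0$, and $\psi'(t) > 0$ exactly when $t > 1-\frac1\alpha$.
\end{itemize}
In either subcase $\psi$ is increasing on the interval $(\max\{0,1-\frac1\alpha\},1)$. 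Therefore $\psi(t) < \psi(1) = 0$ there, which is the desired inequality.

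Combining the two cases yields the lemma on the whole interval. There is no real obstacle here: the only point requiring care is noticing that the lower endpoint $\max\{0,1-\frac1\alpha\}$ is exactly the critical point of $\psi$ when $\alpha>1$. This is what dictates the $\alpha$-dependent case split in the second half.
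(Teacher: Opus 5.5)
Your proof is correct and follows essentially the same route as the paper. The paper also splits at $t=1$ and works with $h_\alpha(t) = |\ln t| - \alpha g(t)$, which equals your $\varphi$ for $t \ge 1$ and $-\psi$ for $t<1$; it then reads off monotonicity from the same derivatives $((1+\alpha)-\alpha t)/t$ and $((\alpha-1)-\alpha t)/t$. The only cosmetic difference is the subcase $\alpha \le 1$, $t<1$: there the paper skips the derivative and argues directly that $-\ln t - \alpha g(t) \ge -\ln t - g(t) = 1-t > 0$.
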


\begin{proof}
    First, we consider the case of $0 < t < 1$.
    When $\alpha \in {(}0, 1]$, 
    from $g(t) \geq 0$ we have 
    $- \ln t - \alpha g(t) \geq - \ln t - g(t) = 1 - t > 0$.
    In the case of $\alpha \geq 1$, 
    {define} the function $h_{\alpha} (t)$ as $h_{\alpha} (t) \coloneqq | \ln t | - \alpha g(t)$.
    Then we have $h'_{\alpha} (t) =  ((\alpha - 1) - \alpha t) / t$, 
    and this derivative is non-positive  
    for all $t \in \left[ 1 - \frac{1}{\alpha}, 1 \right)$.
    Note that $h_{\alpha} (1) = 0$.
    Therefore, we obtain that $h_{\alpha} (t) \geq 0$ 
    for all $t \in \left[ 1 - \frac{1}{\alpha}, 1 \right)$.

    Next, consider the case of $t \geq 1$.
    Then we obtain $h'_{\alpha} (t) =  ((1 + \alpha) - \alpha t) / t$, 
    since this derivative is non-negative 
    for all $t \in \left[ 1, 1 + \frac{1}{\alpha} \right]$ 
    and $h_{\alpha} (1) = 0$, we have $h_{\alpha} (t) \geq 0$ 
    for all $t \in \left[ 1, 1 + \frac{1}{\alpha} \right]$.
\end{proof}
\begin{thm} \label{thm:pro_gua}
    Suppose that Assumption \ref{assum:pro_gua} holds.
    {We set}
    $a^0_i = \exp(\mu^{\ln}_i), \ i = 1, \dots, m$, and $A = (\Sigma^{\ln})^{-1/2}$,
    and {assume} $\ell_2$-norm is used for the norm term of the uncertainty set $\Omega(a^0, A, \tau)$. 
    If
    \begin{equation*}
        \tau \geq \delta(\varepsilon) \left( \exp \left(\lambda^{1/2} \delta(\varepsilon) \right) - 1 \right)
    \end{equation*}
    for $0 < \varepsilon < 1$, 
    then, for all robust feasible solutions $x(\tau)$ 
    to the constraint \eqref{ineq:rc_pro}, 
    the following inequality holds: 
    \begin{equation*}
        \prob{f(a,x(\tau)) \geq 0} \geq 1 - \varepsilon,
    \end{equation*}
    where 
    \begin{equation*}
        \delta(\varepsilon) = \sqrt{F^{-1}(m, 1 - \varepsilon)},
    \end{equation*}
    and $F(\nu, \cdot)$ is the distribution function of 
    the $\chi^2$-distribution with $\nu$ degrees of freedom.
\end{thm}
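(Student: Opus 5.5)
The plan is to show that the realized parameter $a$ lies in $\Omega(a^0,A,\tau)$ with probability at least $1-\varepsilon$. On that event every robust feasible $x(\tau)$ satisfies $f(a,x(\tau)) \ge \min_{a'\in\Omega} f(a',x(\tau)) \ge 0$. Hence
\[
\prob{f(a,x(\tau))\ge 0} \ge \prob{a\in\Omega(a^0,A,\tau)},
\]
and no structural assumption on $f$ is needed. By Proposition~\ref{prop:a_in_Omega}, it suffices to show that with probability at least $1-\varepsilon$ we have $\|(\Sigma^{\ln})^{-1/2}\hat y\|_2\le\tau$, where $\hat y_j=g(a_j/a^0_j)$.

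Set $\xi_j \coloneqq (\ln a_j-\mu^{\ln}_j)/\sigma^{\ln}_j$. These are independent standard normals, so $\|\xi\|_2^2\sim\chi^2_m$ and the event $E\coloneqq\{\|\xi\|_2\le\delta(\varepsilon)\}$ has probability exactly $1-\varepsilon$. It remains to show $E\subseteq\{\|A\hat y\|_2\le\tau\}$.

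On $E$, write $t_j\coloneqq a_j/a^0_j=\exp(\sigma^{\ln}_j\xi_j)$. Then $|\ln t_j| = \sigma^{\ln}_j|\xi_j|\le \lambda^{1/2}\delta(\varepsilon)$ for each $j$. The coordinate-wise bound I aim for is
\[
g(t)\le |\ln t|\bigl(e^{|\ln t|}-1\bigr).
\]
This follows from writing $s=\ln t$, so that $g = e^{s}-s-1$. For $s\ge0$ we have $e^s-1-s\le s(e^s-1)$, which is equivalent to $(1-s)e^s\le 1$, true since $\frac{d}{ds}(1-s)e^s=-se^s\le 0$. For $s<0$, $g=e^{s}-1-s\le s^2/2\le |s|(e^{|s|}-1)$, using $e^{|s|}-1\ge|s|$.

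Consequently $\hat y_j\le \sigma^{\ln}_j|\xi_j|\,(e^{\lambda^{1/2}\delta(\varepsilon)}-1)$, and $\hat y_j\ge0$. Then
\[
\|A\hat y\|_2^2=\sum_j \hat y_j^2/(\sigma^{\ln}_j)^2\le (e^{\lambda^{1/2}\delta(\varepsilon)}-1)^2\|\xi\|_2^2 .
\]
Taking square roots gives $\|A\hat y\|_2\le \delta(\varepsilon)(e^{\lambda^{1/2}\delta(\varepsilon)}-1)\le\tau$, which closes the argument.

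The main obstacle is this comparison between $g$ and $\ln$. Lemma~\ref{lemma:g_ln} only gives the inequality on a restricted interval, with a constant $\alpha$. It could be used instead by taking $\alpha$ such that $1/\alpha = e^{\lambda^{1/2}\delta}-1$, and checking that $t_j$ lies in $(\max\{0,1-1/\alpha\},1+1/\alpha]$. That check is clean on the upper side. On the lower side, $e^{-c}\ge 2-e^{c}$ is needed, which holds by convexity of $\cosh$.

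If I went the lemma route, I would verify both ends of the interval in this way. The direct elementary inequality above seems simpler, so I would try it first and fall back to Lemma~\ref{lemma:g_ln} only for presentation consistency with the paper.
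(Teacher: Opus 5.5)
Your proposal is correct and follows the paper's route. Your event $\{\|(\Sigma^{\ln})^{-1/2}(\ln a-\mu^{\ln})\|_2\le\delta(\varepsilon)\}$ is the paper's set $L(\delta(\varepsilon))$, and the chain $L(\delta(\varepsilon))\subseteq\Omega_{\rm eq}\subseteq\Omega$ together with the $\chi^2$ computation is exactly the paper's argument. The only difference is the elementary comparison between $g$ and $|\ln|$. The paper uses Lemma~\ref{lemma:g_ln} with $\alpha=1/(e^{\lambda^{1/2}\delta(\varepsilon)}-1)$, which requires the interval check $e^{-c}\ge 2-e^{c}$ that it leaves as ``easily verified''. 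Your global bound $g(t)\le|\ln t|\,(e^{|\ln t|}-1)$ gives the same coordinatewise estimate $g(t_j)\le(e^{\lambda^{1/2}\delta(\varepsilon)}-1)\,|\ln t_j|$ with no range restriction, so it is slightly cleaner.
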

\begin{proof}
    For the proof, we define the sets 
    $\Omega_{\rm eq}(a^0, \tau, A)$ and $L(\delta)$ by 
    \begin{gather*}
        \Omega_{\rm eq}(a^0, \tau, A) = \left\{ 
            a \in \R^m \bigmid a = A^0 z, \ g(z) = y, \| Ay\|_2 \leq \tau
        \right\}, \\
        L(\delta) = \left\{ 
            a \in {\R^m _{++}} \bigmid \ln a = \mu^{\ln} + d, \ \left\| (\Sigma^{\ln})^{-1/2} d \right\|_2 \leq \delta
        \right\}.
    \end{gather*}   
    Since the robust feasible solutions $x(\tau)$ {satisfy}  
    $\min_{a \in \Omega} f(a, x(\tau)) \geq 0$, 
    we have 
    \begin{equation} \label{impl_feas}
        a \in \Omega(a^0, \tau, A) \implies 
        f(a, x(\tau)) \geq \min_{a \in \Omega(a^0, \tau, A)} f(a, x(\tau)) \geq 0.
    \end{equation}   
    Next, by the definition of $\Omega_{\rm eq}(a^0, \tau, A)$ it follows that 
    \begin{equation} \label{omega_omega_eq}
        \Omega_{\rm eq}(a^0, \tau, A) \subseteq \Omega(a^0, \tau, A).
    \end{equation}

    On the other hand, we show that $\Omega_{\rm eq}(a^0, \tau, A) \supseteq L(\delta(\varepsilon))$.
    Since $a_i$ are independent, $\Sigma$ is a diagonal matrix.
    Hence, $\Sigma^{\ln}$ and $A$ are also diagonal matrices 
    and the $i$th diagonal element of $A$ is $\lambda_i^{-1/2}$, 
    where $\lambda_i$ denotes the $i$th diagonal element of $\Sigma^{\ln}$.
    Therefore $L(\delta(\varepsilon))$ is expressed~as 
    \begin{align*}
        L(\delta(\varepsilon)) 
        &= \left\{ 
            a \in {\R^m _{++}} \bigmid \ln a = \mu^{\ln} + d, \ 
            \sum_{i = 1}^m \lambda_i^{-1} | d_i |^2 \leq \delta(\varepsilon)^2
            \right\} \\
        &= \left\{ 
            a \in {\R^m _{++}} \bigmid  
            \sum_{i = 1}^m \lambda_i^{-1} {\left(\ln \frac{a_i}{\exp(\mu^{\ln}_i)} \right)}^2 \leq \delta(\varepsilon)^2
            \right\} \\
        &= \left\{ 
            a \in {\R^m _{++}} \bigmid 
            \| A {y^L} \|_2 \leq \delta(\varepsilon)
            \right\},
    \end{align*}
    {where the last line follows by defining $y^L \in \R^m$ with $y^L_i \coloneqq \ln \frac{a_i}{a^0_i}$, $i = 1,\dots, m$.}
    and $\Omega_{\rm eq}(a^0, \tau, A)$ is rewritten as 
    \begin{equation*}
        \Omega_{\rm eq}(a^0, \tau, A) = \left\{ 
            a \in \R^m \bigmid g \left( \frac{a_i}{a^0_i} \right) = y_i \
            (i = 1, \dots, m), \ 
            \| Ay\|_2 \leq \tau
        \right\}.
    \end{equation*}
    Now let $a \in L(\delta(\varepsilon))$. Then $a$ satisfies 
    \begin{align}
        \left| \  \ln \frac{a_i}{a^0_i} \right| &= {y^L _i}, \quad i = 1, \dots, m, \label{ln_y} \\
        \| A {y^L } \|_2  &\leq \delta(\varepsilon). \label{L_Ay_delta}
    \end{align}
    From \eqref{L_Ay_delta} we have 
    $\lambda_i^{-1/2} {y^L _i} \leq \delta(\varepsilon), \ i = 1, \dots, m$, 
    which implies 
    \begin{equation*}
        0 \leq {y^L _i} \leq \lambda_i^{1/2} \delta(\varepsilon), \quad i = 1, \dots, m.
    \end{equation*}
    Then by \eqref{ln_y} and since $\lambda \geq \lambda_i > 0, \ i = 1, \dots, m$ we obtain 
    \begin{equation} \label{eval_arg_ln}
        \exp \left( - \lambda^{1/2} \delta(\varepsilon) \right) \leq 
            \frac{a_i}{a^0_i} \leq 
            \exp \left( \lambda^{1/2} \delta(\varepsilon) \right), \quad 
            i = 1, \dots, m.
    \end{equation}
    Let $z_i = a_i / a^0_i$ and 
    $\alpha = 1 / \left( \exp \left( \lambda^{1/2} \delta(\varepsilon) \right) - 1 \right)$ 
    in Lemma \ref{lemma:g_ln}.
    Then it follows that 
    \begin{equation*}
        \alpha g(z_i) \leq \left| \ln z_i \right|, \quad 
            \forall z_i \in \left[ 1 - \frac{1}{\alpha}, \  
            \exp \left( \lambda^{1/2} \delta(\varepsilon) \right) \right], \quad 
            i = 1, \dots, m.
    \end{equation*}
    Note that we can easily verify that 
    $\max \left\{ 0, 1 - \frac{1}{\alpha} \right\} \leq \exp \left(- \lambda^{1/2} \delta(\varepsilon) \right)$, 
    and then we have the above inequality for all 
    $t \in \left[ \exp \left( - \lambda^{1/2} \delta(\varepsilon) \right),  
    \exp \left( \lambda^{1/2} \delta(\varepsilon) \right) \right]$.
    Therefore, from \eqref{eval_arg_ln} it follows that 
    \begin{equation} \label{g_ln}
        a \in L(\delta(\varepsilon)) \implies \alpha g(z_i) \leq \left| \ln z_i \right|, \quad 
        i = 1, \dots, m,
    \end{equation}
    then, by letting $y^*_i = g(z_i), \ i = 1, \dots, m$, we have
    \begin{align*}
        \| A y^* \|_2^2 &= \sum_{i = 1}^m \lambda_i^{- 1} g(z_i)^2 \\
            &\leq \frac{1}{\alpha^2} \sum_{i = 1}^m \lambda_i^{- 1} (\ln z_i)^2 \\
            &= \frac{1}{\alpha^2} \| A \ln z \|_2^2 \\ 
            &\leq \frac{\delta(\varepsilon)^2}{\alpha^2} \\
            &= \left( \delta(\varepsilon) \left( \exp \left( \lambda^{1/2} \delta(\varepsilon) \right) - 1 \right) \right)^2 \\ 
            &\leq \tau^2,
    \end{align*}
    which implies $a \in \Omega_{\rm eq}(a^0, \tau, A)$.
    Thus, we obtain 
    \begin{equation} \label{l_omega_eq}
        L(\delta(\varepsilon)) \subseteq \Omega_{\rm eq} (a^0, \tau, A).
    \end{equation}

    Finally, since 
    $(\Sigma^{\ln})^{-1/2}(\ln a - \mu^{\ln})$
    follows the standard normal distribution, 
    \begin{equation*}
        \chi^2(a) = \left\| (\Sigma^{\ln})^{-1/2}(\ln a - \mu^{\ln}) \right\|_2^2
    \end{equation*}
    follows the $\chi^2$-distribution with $m$ degrees of freedom
    (see~\cite{Hog19}).
    Thus, if $\delta = \delta(\varepsilon)$, then we have
    \begin{equation} \label{p_logn}
        \prob{a \in L(\delta(\varepsilon))} 
            = \prob{\chi^2(a) \leq \delta(\varepsilon)^2}
            = 1 - \varepsilon,  
    \end{equation}
    which is justified 
    by the definition of the distribution function in~\cite{Hog19}. 

    Hence, it follows that 
    \begin{align*}
        \prob{f(a,x(\tau)) \geq 0} &\geq \prob{a \in \Omega(a^0, \tau, A)} \\
            & \geq \prob{a \in \Omega_{\rm eq}(a^0, \tau, A)} \\ 
            & \geq \prob{a \in L(\delta(\varepsilon))} \\
            & = 1 - \varepsilon. 
    \end{align*}
    The inequalities hold from 
    \eqref{impl_feas}, \eqref{omega_omega_eq}, and \eqref{l_omega_eq}, 
    respectively, 
    and the last equality is true from~\eqref{p_logn}. 
\end{proof}


\section{Numerical experiments} \label{sect:num}
In this section, we present numerical experiments 
in which the proposed model was applied to  
two different robust optimization problems driven by data.
Both problems have positive-valued parameters under uncertainty.
The objective of these experiments is 
to confirm that our model can assume parameter uncertainty 
only in $\R^m_{++}$. 

\subsection{Planning of the photovoltaic and battery system operation}
We first consider a simplified version of the problem given in~\cite{Mat14}, 
where we minimize the investment and operating costs of photovoltaic (PV) panels and batteries. The problem solved here is obtained 
by fixing the decision variables $z \in \R$ and $Q \in \R$ in the original one~\cite{Mat14}. 
Then the purpose of this problem is to optimize the operating schedule  
of the given PV panels and battery so as to minimize the operating cost.
The operating period is split into $T$ intervals $[t - 1, t]$, 
and the objective function is the sum of costs of purchased power at each interval.
The constraints ensure that the given power demand is satisfied at every interval, that the power generated by the PV does not exceed the solar irradiance, 
and that the power charged to the battery does not exceed its capacity.
The formulation is given as follows:
\begin{equation} 
     \label{prob:EO_E}
     \tag{$\text{EO}_{\text{E}}$}
     \begin{aligned} 
          & \hspace{-7mm} {\displaystyle \min_{x^C, x^P, x^R, x^S, q}}
               \quad \sum_{t = 1}^T C^P_t x^P_t \\
          & \ {\rm s.t.} \hspace{9mm} 
               x^R_t + \gamma x^S_t + x^P_t \geq D_t, \quad
               \forall t \in \mathcal{T}, \\
          & \hspace{14mm} 
               x^C_t + x^R_t \leq E_t z, \quad
               \forall t \in \mathcal{T}, \\
          & \hspace{14mm} 
               q{_t} = q{_{t - 1}} + x^C_t - x^S_t, \quad
               \forall t \in \mathcal{T}, \\
          & \hspace{14mm}
               q_0 = 0, \\
          & \hspace{14mm} 
               q_t \leq Q, \quad
               \forall t \in \mathcal{T}, \\
          & \hspace{14mm} 
               x^C, x^P, x^R, x^S, q \geq 0, \quad
               \forall t \in \mathcal{T},
     \end{aligned} 
\end{equation}
where $\mathcal{T} = \{1, \dots, 24\}$.
For simplicity, we omitted the constraint (5) in~\cite{Mat14} 
that restricts the ratio of purchased power to the whole supplied power.
Table~\ref{table:energy} shows the decision variables and parameters 
in the problem~\eqref{prob:EO_E}.

\begin{table}[ht]
     \caption{Definition of decision variables and parameters 
               in~\eqref{prob:EO_E}}
     \label{table:energy}
     \centering
     \small
     \begin{tabular}{@{}p{.03\textwidth}p{.40\textwidth}p{.03\textwidth}p{.40\textwidth}@{}}
          \toprule
          \multicolumn{2}{c}{Variables} & \multicolumn{2}{c}{Parameters} \\
          \cmidrule(r){1-2}\cmidrule(l){3-4}
          $x^C_t$ & power charged to the battery
          & $T$ & operation period [hour] \\
          $x^P_t$ & purchased power
          & $D_t$ & demand [kWh] \\
          $x^R_t$ & power supplied from PV
          & $E_t$ & solar irradiance [kWh/${\rm m}^2$] \\
          $x^S_t$ & power supplied from the battery
          & $z$ & size of PV panels [${\rm m}^2$] \\
          $q_t$ & charged power in battery
          & $Q$ & capacity of the battery [kWh] \\
          & & $C^P_t$ & electricity price [yen/kWh] \\
          & & $\gamma$ & discharge efficiency from the battery \\
          \bottomrule
     \end{tabular}
\end{table}

In this experiment, like in~\cite{Mat14}, we assume that 
the solar irradiance $E$ is an uncertain parameter, 
which can only have positive values.
The decision variables satisfy the second constraint in~\eqref{prob:EO_E} 
for all $E \in \Omega(E^0, \tau, A)$ if and only if 
\begin{align}
     & x^C_t + x^R_t \leq 
          (u^t)_t \ln \left( 1 + \frac{E^0_t z}{(u^t)_t} \right)
          - \tau \|s^t \|_*, \quad
          \forall t \in \mathcal{T}, \label{ineq:rc_log_energy} \\
     & u^t = A^{\T} s^t, \ u^t \geq 0, \quad
          \forall t \in \mathcal{T},  \notag \\
     & (u^t)_t + E^0_t z \geq 0, \quad
          \forall t \in \mathcal{T}, \label{ineq:rc_arg_log_energy}
\end{align}
where $E^0 \in \R^T$ denotes the nominal value of $E$, 
and $u^t, s^t \in \R^T, \ t \in \mathcal{T}$ are additional variables.

\begin{remark}
     This problem has the same form as the one in Example \ref{ex:infeas1}.
     Thus, if we use an existing uncertainty model, 
     then the robust counterpart becomes infeasible 
     when the size of the uncertainty set is large.   
\end{remark}

We set the value of parameters as follows.
The operation period is $T = 24$.
For hourly data of power demand $D$, we used 
the data in Kyushu on March 9, 2022, Japan from~\cite{Kyu}.
Note that we used the household demand data 
obtained by dividing the whole demand by the number of households from~\cite{Sou}.
As a nominal value $E^0$ of solar irradiance, 
we used the sample mean of the data at Fukuoka from March 1, 2022 to March 31, 2022 from~\cite{Met}.
We set the electricity price $C^P$ as
$C^P_t = 10.7$ if $t = 0, 1, 2, 3, 4, 5, 6, 7, 23$, 
otherwise $C^P_t = 32$.
Moreover, the discharge efficiency is set to $\gamma = 0.8$.

For the dual form \eqref{ineq:rc_log_energy} of the robust constraint, 
we used the $\ell_2$-norm.
To avoid numerical issues when the log argument becomes very small, we replaced the logarithmic function $\ln(\cdot)$ in \eqref{ineq:rc_log_energy} by its piecewise-linear lower approximation (tangent line) near $\varepsilon > 0$. Specifically, we used the term $\ell(x)$ instead of $\ln x$:
\begin{equation*}
     \ell(x) \coloneqq
     \begin{cases}
          \ln x, \quad x \geq \varepsilon, \\
          \frac{1}{\varepsilon} (x - \varepsilon) + \ln \varepsilon, \quad
          x < \varepsilon.
     \end{cases}
\end{equation*} 
Here we set $\varepsilon = 10^{-9}$.
Moreover, we can add the equal sign to \eqref{ineq:rc_arg_log_energy} 
because the robust optimal solution satisfies this constraint with inequality. 

The numerical experiments were conducted using MATLAB Online (Release R2025b). 
We solved the associated optimization problems utilizing the ``fmincon'' function from the MATLAB Optimization Toolbox.

Let us now analyze the results. Figure~\ref{fig:energy_optimal_config} shows the optimal operation plan of the nominal problem and its robust counterpart for $\tau=10 ^{-3.5}$. We can observe clear strategic differences between the two models. First, the robust optimal solution plans for significantly less direct solar generation due to the assumed worst-case uncertainty (the yellow PV line is visibly lower in Figure~\ref{fig:energy_optimal_config}(b) than in~\ref{fig:energy_optimal_config}(a)). Second, the battery charge level (the purple line $q$) remains near zero in the robust model. This happens because the robust model anticipates scarce solar resources and chooses to route all available PV power directly to immediate demand, rather than suffering the $20\%$ energy conversion loss ($\gamma=0.8$) associated with battery storage. Consequently, to maintain system safety without relying on the battery, the robust model plans to purchase significantly more power from the grid during daylight hours (the orange purchase line stays higher in Figure~\ref{fig:energy_optimal_config}(b) compared to the deep drop in~\ref{fig:energy_optimal_config}(a)). This demonstrates that the robust counterpart adapts its physical strategy to safely and efficiently handle severe weather uncertainty.

\begin{figure}[ht]
     \centering
     \begin{minipage}{.47\textwidth}
          \centering
          \includegraphics[width=.9\linewidth]
               {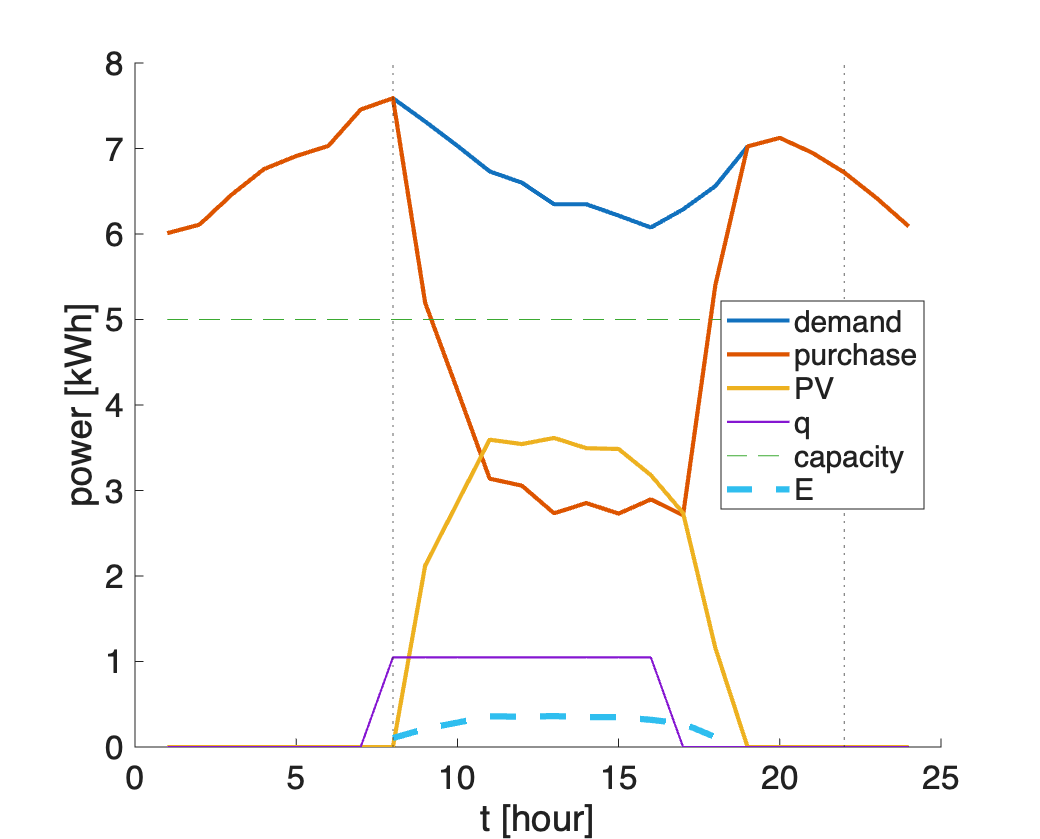}
          \subcaption{Nominal optimal solution}
     \end{minipage}
     \begin{minipage}{.47\textwidth}
          \centering
          \includegraphics[width=.9\linewidth]
               {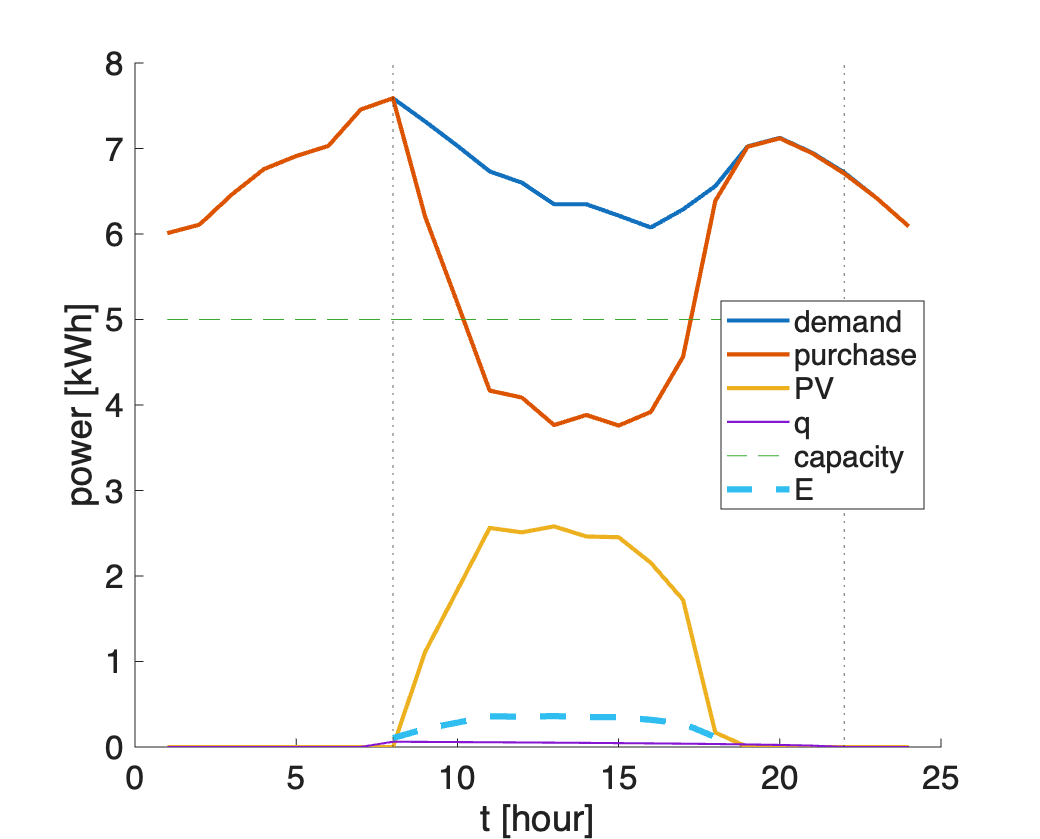}
          \subcaption{Robust optimal solution for \mbox{$\tau = 10^{-3.5}$}}
     \end{minipage}
     \caption{Optimal power supply configuration}
     \label{fig:energy_optimal_config}
\end{figure}

To further compare the optimal solutions of the robust counterpart 
to that of nominal problem~\eqref{prob:EO_E}, we use the following two criteria: maximum violation rate and actual cost. 
Throughout the description below, 
we denote the realized data of solar irradiance as $E^{\text{realized}}$.

The maximum violation rate is defined as follows:
\[
     v(E^{\text{realized}}, x^C, x^R) = 
     \max_{t \in \mathcal{T}} 
     \frac{x^C_t + x^R_t - E^{\text{realized}}_t z}
     {E^0_t z} \times 100.
\]
Note that $v(E^{\text{realized}}, x^C, x^R)$ measures 
how the solutions $x^C$ and $x^R$ violate the actual constraint
\begin{equation} \label{ineq:cons_realized}
     x^C_t + x^R_t \leq E^{\text{realized}}_t z, \quad
          \forall t \in \mathcal{T}, 
\end{equation} 
since $v(E^{\text{realized}}, x^C, x^R) \leq 0$ means that 
$x^C_t$ and $x^R_t$ satisfy the constraint \eqref{ineq:cons_realized} at the interval $t$
and this value is positive if the constraint is violated.

The actual cost is calculated by adjusting the optimal solutions. 
First, rescale $x^C_t$ and $x^R_t$ 
so that they satisfy the actual constraint \eqref{ineq:cons_realized} for all~$t$.
Second, rescale $x^S$ to make its sum over $t \in \mathcal{T}$ 
equal to the adjusted total value of charged power~$x^C$. 
Third, change the amount of purchased power~$x^P$ 
to fill the gap between both sides of the demand constraint 
\begin{equation*}
     x^R_t + \gamma x^S_t + x^P_t \geq D_t, \quad
               \forall t \in \mathcal{T},
\end{equation*}
caused by the reduction or increase of $x^R, x^S$.
Then, finally, the actual cost is given as the total purchase cost 
with respect to the adjusted values of~$x^P$.

Figure~\ref{fig:energy_criteria_tau_vals} shows the maximum violation rate and the relative difference of actual cost for valid values of $\tau$. As the realized values of solar irradiance, we used the data from March~1, 2023 to March~31, 2023~\cite{Met}. We can see from Figure~\ref{fig:energy_criteria_tau_vals}(a) that the nominal model results in a mean maximum violation rate of approximately 42.3$\%$. In contrast, the robust solutions successfully buffer the system against uncertainty, reducing the violation rate to below 40.5$\%$ and continuing to decrease it as the uncertainty set size $\tau$ increases. Furthermore, Figure~\ref{fig:energy_criteria_tau_vals}(b) demonstrates that the robust solutions achieve a consistent actual cost savings of over 1.1$\%$ compared to the nominal problem. Although there is a slight trade-off between these two criteria as~$\tau$ increases, the robust model strictly outperforms the nominal solution in both safety and cost.

\begin{figure}[ht]
     \centering
     \begin{minipage}{.45\textwidth}
          \centering
          \includegraphics[width=.9\linewidth]
               {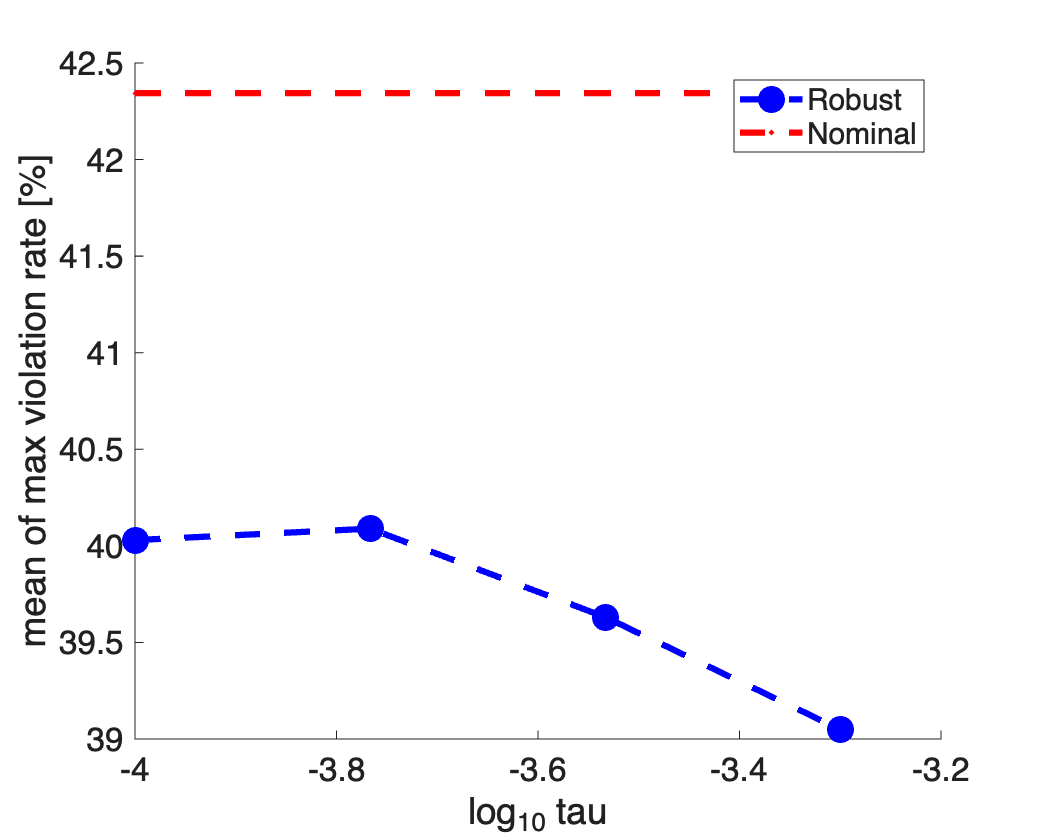}
          \subcaption{Maximum violation rate}
     \end{minipage}
     \begin{minipage}{.45\textwidth}
          \centering
          \includegraphics[width=.9\linewidth]
               {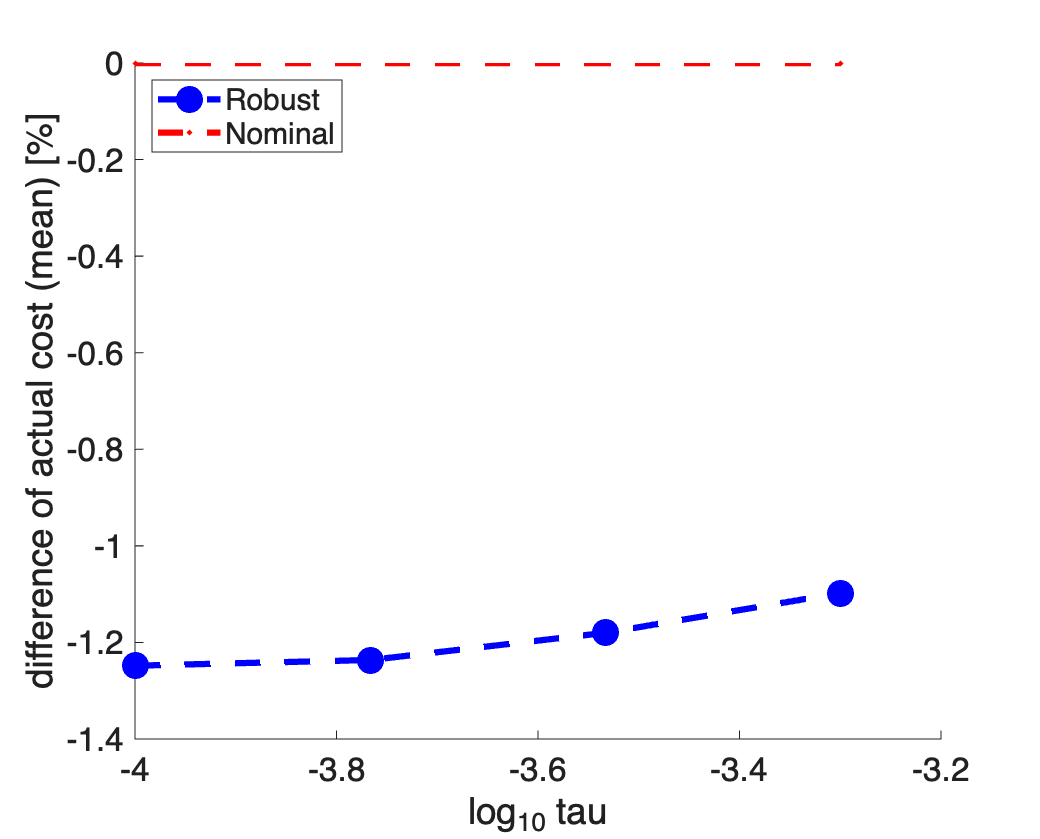}
          \subcaption{Relative difference of actual cost 
                         from that of the nominal problem}
     \end{minipage}
     \caption{Two criteria for some values of $\tau$}
     \label{fig:energy_criteria_tau_vals}
\end{figure}


\subsection{Binary classification}
Support vector machine (SVM) is a method that handles binary classification problems.
It finds a separating hyperplane 
by maximizing its margin against given training data.
In particular, C-SVM tolerates the misclassification to some degree, 
and enables us to obtain a separating hyperplane 
even if the data is not strictly separable. Its
formulation is given as follows:
\begin{equation} 
     \label{prob:svm}
     \tag{C-SVM}
     \begin{aligned} 
          & {\displaystyle \min_{w, b, \zeta}} \hspace{8mm}
            \frac{1}{2} \| w \|^2_2 + C \sum_{i = 1}^N \zeta_i \\
          & \ {\rm s.t.} \hspace{9mm} 
               y_i (w^{\T} x^i + b) \geq 1 - \zeta_i, \quad
               i = 1, \dots, N, \\
          & \hspace{14mm} 
               \zeta_i \geq 0, \quad
               i = 1, \dots, N, 
     \end{aligned} 
\end{equation}
where the separating hyperplane is expressed as $w^{\T} x + b = 0$, 
and $x^1, \dots, x^N \in \Rn$ denotes the given data.
Moreover, $y_i \in \{ -1, 1 \}$ is the class label of the $i$-th data.
The parameter $C > 0$ determines the balance of the regularization and the penalty term.

Existing research has shown that in machine learning problems such as SVM, 
``robustifying'' the model against uncertainties in the training data 
is equivalent to regularization.
In this experiment, we omitted the explicit regularization term from~\eqref{prob:svm}, 
and instead solved the robust counterpart 
obtained by assuming uncertainties in the training data using the proposed model. 
Let $\bar{x}^i \in \R^n_{++}$ denote the nominal feature vector (here $\bar{x}^i \coloneqq x^i$).
We assumed that the uncertain true data point $\tilde{x}^i$ belongs to the set $\Omega_i(\bar{x}^i, \tau, A^i)$ for $i = 1, \dots, N$, 
replacing the nominal constraint of~\eqref{prob:svm} with the robust constraint: 
\[
     \min_{\tilde{x}^i \in \Omega_i(\bar{x}^i, \tau, A^i)} y_i (w^{\T} \tilde{x}^i + b) \geq 1 - \zeta_i, \quad
     i = 1, \dots, N.
\]
Define $(A^0)^i \coloneqq \diag(\bar{x}^i)$. The equivalent dual form is given as follows:
\begin{equation*} 
     \begin{aligned} \label{problem:robust_svm}
          & {\displaystyle \min_{w, b, \zeta, \{u^i\}, \{s^i\} }} \hspace{8mm}
            \sum_{i = 1}^N \zeta_i \\
          & \ {\rm s.t.} \hspace{9mm} 
               \sum_{k = 1}^n u^i_k \ln 
               \left( 1 + \frac{y_i w_k \bar{x}^i_k}{u^i_k} \right)
               - \tau \| s^i \|_*
               + y_i b \geq 1 - \zeta_i, \quad
               i = 1, \dots, N, \\
          & \hspace{14mm}
               u^i = A^{\T} s^i, \ u^i \geq 0, \quad
               i = 1, \dots, N, \\ 
          & \hspace{14mm}
               u^i + y_i (A^0)^i w \geq 0, \quad
               i = 1, \dots, N, \\
          & \hspace{14mm} 
              \zeta_i \geq 0, \quad
               i = 1, \dots, N,
     \end{aligned} 
\end{equation*}
where $u^i, s^i \in \Rn$.

We used the Wine Quality data set from the UCI Machine Learning Repository~\cite{UCI} 
for the positive-valued data $x^i$.
We split 100 samples into 50 training data and 50 test data.
We implemented the experiment in the same environment as in Section 5.1. 

Let us now analyze the results.
Figure \ref{fig:svm_optimal_value} shows the change in the optimal value 
of the nominal~\eqref{prob:svm} and the robust counterpart 
when their parameters $C$ and $\tau$ were varied, respectively.
Figure \ref{fig:svm_accuracy} shows the corresponding change in accuracy against the test data 
of the nominal~\eqref{prob:svm} and the robust counterpart.
From both figures, we observe that 
the behavior of the optimal value and the accuracy of the robust solution as $\tau$ increases 
is remarkably similar to that of the nominal solution as $C$ decreases.
Note that in the nominal~\eqref{prob:svm} problem, a decrease in $C$ signifies that the regularization is strengthened.
From these results, we have confirmed that in the C-SVM framework, 
robustifying the constraints using our proposed model 
successfully yields a similar effect to explicit structural regularization.

\begin{figure}[ht]
     \centering
     \begin{minipage}{.45\textwidth}
          \centering
          \includegraphics[width=6cm]
          {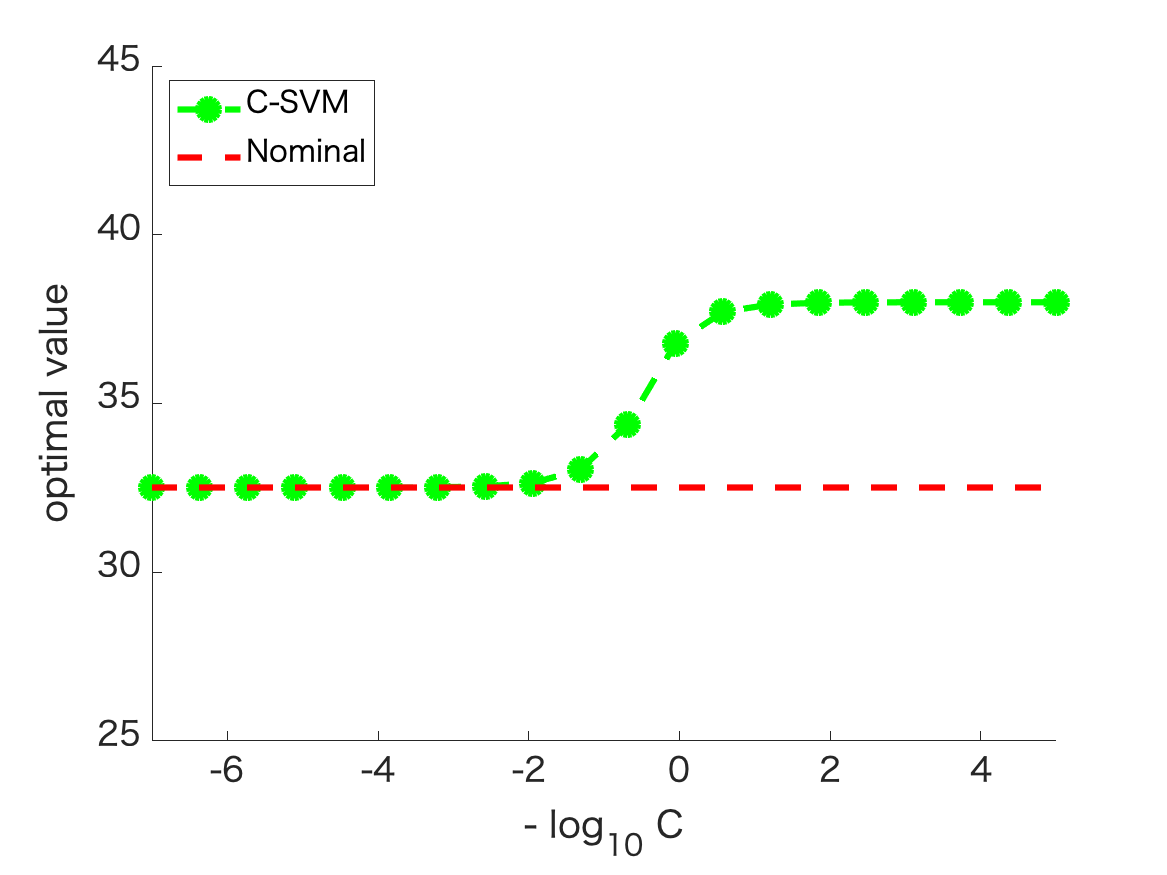}
          \subcaption{C-SVM}
          \label{fig:svm_opt_c-svm}
     \end{minipage}
     \begin{minipage}{.45\textwidth}
          \centering
          \includegraphics[width=6cm]
               {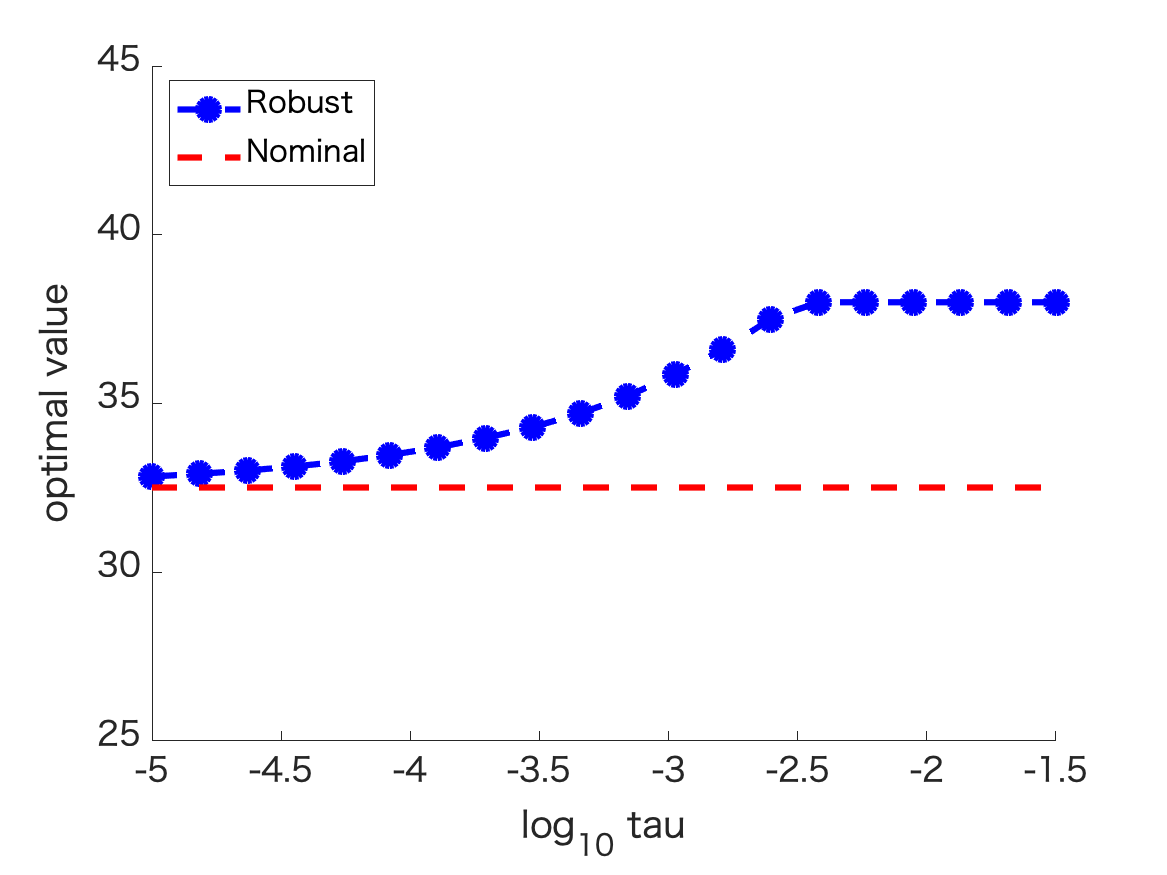}
          \subcaption{Robust counterpart}
          \label{fig:svm_opt_robust}
     \end{minipage}
     \caption{Optimal value}
     \label{fig:svm_optimal_value}
\end{figure}

\begin{figure}[ht]
     \centering
     \begin{minipage}{.45\textwidth}
          \centering
          \includegraphics[width=6cm]
          {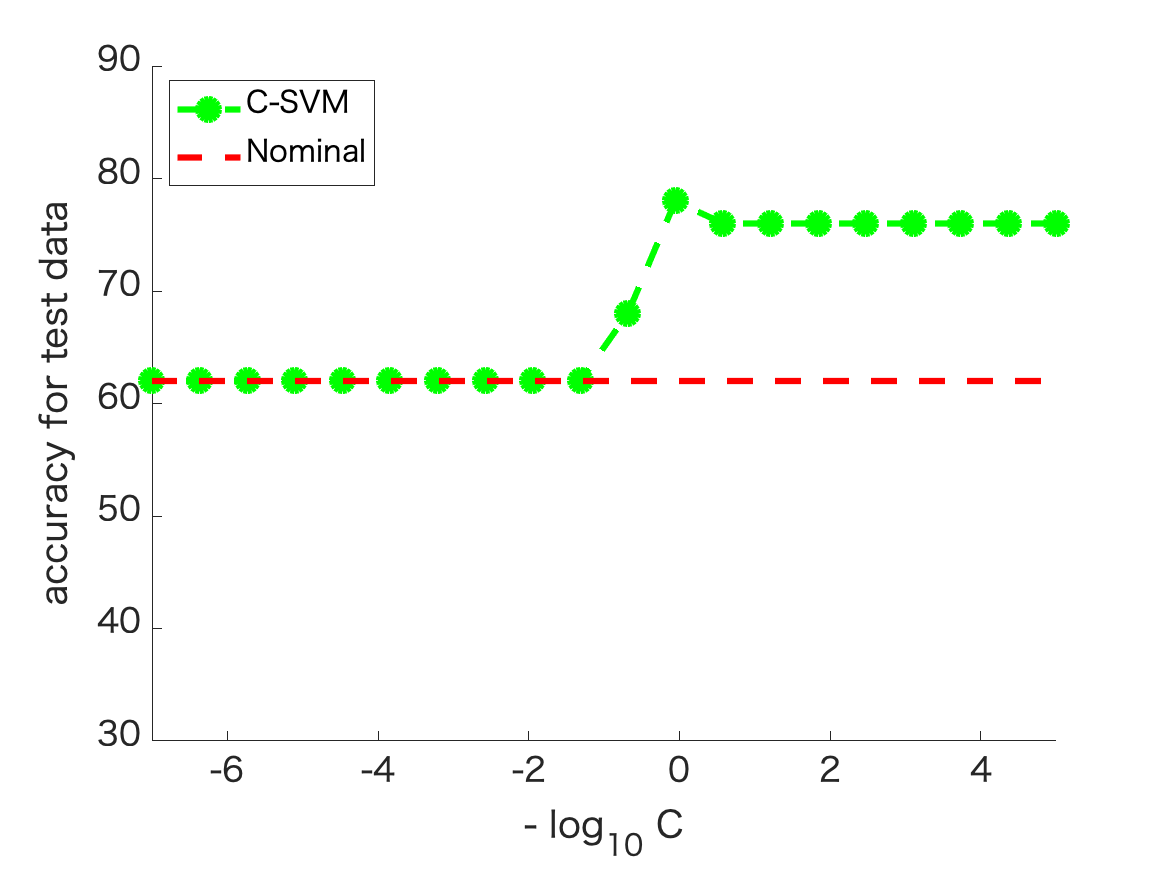}
          \subcaption{C-SVM}
          \label{fig:svm_acc_c-svm}
     \end{minipage}
     \begin{minipage}{.45\textwidth}
          \centering
          \includegraphics[width=6cm]
               {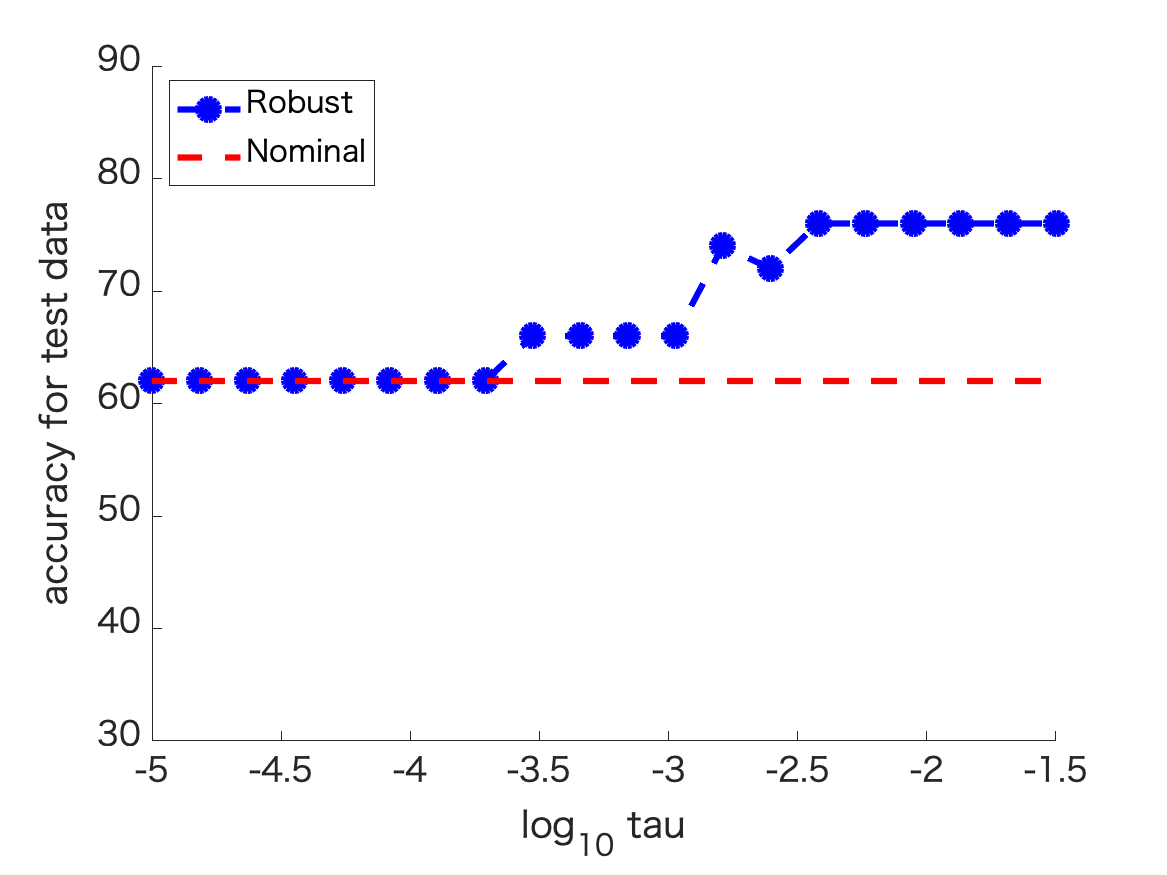}
          \subcaption{Robust counterpart}
          \label{fig:svm_acc_robust}
     \end{minipage}
     \caption{Accuracy for test data}
     \label{fig:svm_accuracy}
\end{figure}


\section{Conclusion} \label{sect:conc}

In this paper, we proposed a new uncertainty-set model designed for optimization problems with positive-valued parameters and derived a tractable dual reformulation of the corresponding robust counterpart. We established several properties of robust optimization problems under the proposed model, showing in particular that the model preserves positivity of the uncertain parameters and provides useful bounds that can guide the choice of the uncertainty level before solving the robust problem. We also derived a probabilistic guarantee for robust feasible solutions under a lognormal assumption on the uncertain parameters.

We also performed numerical experiments on two applications---PV--battery operation planning and a support vector machine model. For these problems, standard box/ellipsoidal uncertainty sets can lead to infeasibility at large uncertainty levels, whereas the proposed model yields a tractable robust counterpart. In our experiments, the robust solutions reduced constraint violations and also outperformed nominal solutions in terms of realized cost.
In addition to exploring other applications, an important direction for future work is to develop algorithms suited to the resulting robust counterpart, as it involves nonlinear constraints with logarithmic terms. It is also of interest to replace the function~$g$ used in the proposed model by a more general convex function.


\bibliographystyle{plain} 
\bibliography{references.bib}

\end{document}